\numberwithin{equation}{section} % For the equation numbering
\renewcommand{\thefigure}{\ifnum\value{section}>0
	\thesection.\fi\arabic{figure}}
\newlength{\bibitemsep}
\newlength{\bibparskip}\setlength{\bibparskip}{0pt}
\let\oldthebibliography\thebibliography
\renewcommand\thebibliography[1]{\oldthebibliography{#1}
	\setlength{\parskip}{\bibitemsep}
	\setlength{\itemsep}{\bibparskip}}
\setlist[enumerate,1]{label=(\roman*), font = \normalfont} 
\newcommand\footnoteref[1]{\protected@xdef\@thefnmark{\ref{#1}}\@footnotemark}
\newtheorem{theorem}{Theorem}[section]
\newtheorem{lemma}[theorem]{Lemma}
\newtheorem{remark}[theorem]{Remark}
\newcommand{\C}{\mathbb{C}}
\newcommand{\E}{\mathbb{E}}
\newcommand{\N}{\mathbb{N}}
\renewcommand{\P}{\mathbb{P}}
\newcommand{\Q}{\mathbb{Q}}
\newcommand{\R}{\mathbb{R}}
\newcommand{\Z}{\mathbb{Z}}
\newcommand{\cD}{\mathcal{D}}
\newcommand{\cE}{\mathcal{E}}
\newcommand{\cF}{\mathcal{F}}
\newcommand{\cG}{\mathcal{G}}
\newcommand{\cN}{\mathcal{N}}
\newcommand{\cP}{\mathcal{P}}
\newcommand{\cT}{\mathcal{T}}
\newcommand{\ie}{i.e.\@\xspace}
\newcommand{\eg}{e.g.\@\xspace}
\newcommand{\as}{a.s.\@\xspace}
\newcommand{\iid}{i.i.d.\@\xspace}
\renewcommand{\tilde}{\widetilde}
\renewcommand{\d}[1]{\mathop{}\!\mathrm{d}#1}
\newcommand{\e}{\mathrm{e}}
\newcommand{\iu}{\mathrm{i}}
\newcommand{\Expec}[1]{\mathbb{E}\left[#1\right]}
\newcommand{\abs}[1]{\left\lvert#1\right\rvert}
\DeclareMathOperator{\Log}{Log}
\DeclareMathOperator{\re}{Re}
\let\originalleft\left
\let\originalright\right
\renewcommand{\left}{\mathopen{}\mathclose\bgroup\originalleft}
\renewcommand{\right}{\aftergroup\egroup\originalright}
\title{Asymptotics of the overlap distribution of branching Brownian motion at high temperature}
\author{Louis \textsc{Chataignier}%
\footnote{Institut de Mathématiques de Toulouse, UMR 5219, Université de Toulouse, CNRS, F-31062 Toulouse Cedex 9, France.}~
and Michel \textsc{Pain}$^*$}
\date{\today}
\begin{document}

\maketitle

\begin{abstract}
    At high temperature, the overlap of two particles chosen independently according to the Gibbs measure of the branching Brownian motion converges to zero as time goes to infinity.
    We investigate the precise decay rate of the probability to obtain an overlap greater than $a$, for some $a>0$, in the whole subcritical phase of inverse temperatures $\beta \in [0,\beta_c)$.
    Moreover, we study this probability both conditionally on the branching Brownian motion and non-conditionally.
    Two sub-phases of inverse temperatures appear, but surprisingly the threshold is not the same in both cases.
\end{abstract}

{
	\hypersetup{linkcolor=black} % avoid to have the table of contents in blue
	\tableofcontents
}

\section{Introduction}

The (binary) branching Brownian motion (BBM) is a continuous-time branching Markov process constructed as follows.
At time $t = 0$, a single particle starts a standard Brownian motion in $\R$ from the origin.
After a random time following an exponential distribution with parameter $1$, it splits in two, or equivalently, it dies while giving birth to two children.
These new particles then repeats the same process, independently of each other.
More precisely, the children perform independent Brownian motions from the position of their parent at its death and, after independent exponential times, they in turn split in two.
For a formal construction, see \cite{Chauvin1991}.

Branching Brownian motion has been widely studied over the last fifty years, initially for its link with reaction-diffusion equations \cite{McKean1975} which motivated a precise study of its extremes \cite{Bramson1978,Bramson1983,LalleySellke1987,AidekonBerestyckiBrunetShi2013,ArguinBovierKistler2013}.
Other important motivations for this model come from physics.
In spin glass theory, the study of disordered systems with a hierarchical structure of correlation for the energies dates back to the generalized random energy model introduced by Derrida and Gardner \cite{DerridaGardner1986} and includes the BBM (see \eg Bovier's book \cite{Bovier2016}).
The BBM can also be seen as an infinite dimensional limit of directed polymers \cite{DerridaSpohn1988} and is also related to diffractive scattering in high-energy particle physics \cite{Munier2009}.
See Section \ref{subsec:comments} for more details.
In all these frameworks coming from physics, understanding the overlap distribution of particles chosen according to the Gibbs measure plays an important role, and this paper is set in this context, by investigating the probability to have an atypical overlap in the high temperature phase.

\subsection{Definitions and previous results}

From a statistical physics perspective, for some fixed time $t \geq 0$, one can identify the positions of particles alive at time $t$ with the energy levels of a physical system.
Each particle can then be thought of as a configuration of the system.
Let $\cN(t)$ be the set of particles alive at time $t$ and $X_u(t)$ the position of particle $u$ at time $t$.
The \emph{Gibbs measure} of the BBM at time $t$ and at inverse temperature $\beta \geq 0$ is the (random) probability measure $\cG_{\beta, t}$ on $\cN(t)$ that assigns to each particle $u$ a weight proportional to $\e^{\beta X_u(t)}$.
By rescaling the associated partition function, we obtain the so-called \emph{additive martingale} of the BBM at inverse temperature $\beta$, introduced by McKean \cite{McKean1975},
\begin{equation}\label{eq:definition_additive_martingale}
	W_t(\beta) = \e^{-\psi(\beta)t} \sum_{u \in \cN(t)} \e^{\beta X_u(t)},
\end{equation}
where $\psi(\beta) = 1 + \beta^2/2$.
Since it is a positive martingale, it converges \as to a random variable $W_\infty(\beta)$.
This limit plays a key role in the study of the long-term behavior of the BBM, as illustrated in Theorem~\ref{thm:asymptotics_of_the_overlap_distribution} and Theorem~\ref{thm:asymptotics_in_mean} below.
A phase transition occurs: by \cite[Proposition~1]{Neveu1988}, if $\beta \geq \sqrt{2}$ then $W_\infty(\beta) = 0$ \as, if $\beta < \sqrt{2}$ then $W_\infty(\beta) > 0$ a.s.
In the latter case, the additive martingale is uniformly integrable, or equivalently, it converges in $L^1$.
In the critical case $\beta = \sqrt{2}$, it is more suitable to consider the \emph{derivative martingale}, introduced by Lalley and Sellke \cite{LalleySellke1987},
\begin{equation}\label{eq:definition_derivative_martingale}
    Z_t = \sum_{u \in \cN(t)} (\sqrt{2}t - X_u(t)) \e^{\sqrt{2}X_u(t) - 2t}.
\end{equation}
The authors showed that it converges \as to a random variable $Z_\infty$ and that $Z_\infty > 0$ \as
The critical additive martingale is related to the derivative martingale through the following convergence
\begin{equation}\label{eq:convergence_critical_additive_martingale}
    \sqrt{t} W_t(\sqrt{2}) \xrightarrow[t \to \infty]{} \sqrt{2\smash{/}\pi} Z_\infty, \quad \text{in probability},
\end{equation}
proved for the branching random walk (BRW) by A\"{i}dékon and Shi \cite{AidekonShi2014}.
For the BBM, this is a direct consequence of \cite[Proposition~2.2]{MaillardPain2019}.

In this paper, we are interested in the behavior of the overlap between particles chosen according to the Gibbs measure.
The \emph{overlap} between two particles $u, v \in \cN(t)$, denoted $q_t(u, v) \in [0,1]$, is defined as 
the last time where $u$ and $v$ have had a common ancestor alive, rescaled by a factor of $t$.
In other words, it is the proportion of time where $u$ and $v$ have shared a common trajectory.
Then, the \emph{overlap distribution} at time $t \geq 0$ and inverse temperature $\beta$ is the following random probability measure on $[0,1]$:
\begin{equation} \label{eq:def_overlap_dist}
    \nu_{\beta, t} = \cG_{\beta, t}^{\otimes 2}(q_t(u, v) \in \cdot) = \frac{1}{W_t(\beta)^2} \sum_{u, v \in \cN(t)} \e^{\beta(X_u(t) + X_v(t)) - 2 \psi(\beta) t} \delta_{q_t(u, v)},
\end{equation}
so that, for $a \in [0,1]$, $\nu_{\beta, t}([a,1])$ is the conditional probability, given the BBM, that two particles chosen independently according to $\cG_{\beta,t}$ have an overlap at least $a$.
This quantity was introduced by Derrida and Spohn \cite{DerridaSpohn1988} by analogy with the overlap between two configurations of spin glasses.
We are also interested in the \textit{mean overlap distribution} $\E[\nu_{\beta, t}]$, which is a deterministic probability measure on $[0,1]$ such that $\E[\nu_{\beta, t}([a,1])]$ is now the (non-conditional) probability that two particles chosen independently according to $\cG_{\beta,t}$ have an overlap at least $a$.

The overlap distribution satisfies the following phase transition:
\begin{equation} \label{eq:cv_overlap_distrib}
   	\nu_{\beta, t}
	\xrightarrow[t\to\infty]{} 
	\begin{cases}
		\delta_0,  
		& \text{if } \beta \leq \sqrt{2}, \\
		(1 - \pi_\beta) \delta_0 + \pi_\beta \delta_1, 
		&  \text{if } \beta > \sqrt{2},
	\end{cases} 
\end{equation}
in distribution for the weak topology, where $\pi_\beta$ is a random variable with values in $(0,1)$, mean $1-\sqrt{2}/\beta$ and a law which depends explicitely on $\beta$.
This result was conjectured by Derrida and Spohn \cite{DerridaSpohn1988}.
In the subcritical case, it was first proved by Chauvin and Rouault \cite{ChauvinRouault1997} for the BRW; more precisely, they proved the \as weak convergence of the rescaled measure $\nu_{\beta,t}(\cdot/t)$ on $[0,\infty)$ towards a non-trivial random measure.
In the critical case, this is a direct consequence of Bovier and Kurkova \cite{BovierKurkova2004} in the case of the binary Gaussian BRW, and it has been established for general BRW by Madaule \cite{Madaule2016}.
The supercritical case was treated by Mallein~\cite{Mallein2018} for the BRW (the convergence of the mean overlap distribution was proved earlier in \cite{BovierKurkova2004}).
Finally, Bonnefont~\cite{Bonnefont2022} proved the convergence~\eqref{eq:cv_overlap_distrib} for the BBM.
In this paper, we focus on the subcritical phase $\beta < \sqrt{2}$ and obtain the precise decay rate of the overlap distribution on $(0,1]$, as well as its limit, once rescaled by this rate.
We investigate separately the typical overlap distribution in Theorem~\ref{thm:asymptotics_of_the_overlap_distribution} and the mean overlap distribution in Theorem~\ref{thm:asymptotics_in_mean}.
Each theorem reveals two main regimes, though the threshold differs between them.

Theorem~\ref{thm:asymptotics_in_mean} involves a new probability measure $\Q_{\beta}$, introduced by Chauvin and Rouault \cite{ChauvinRouault1988}.
If we denote by $(\cF_t)_{t \geq 0}$ the natural filtration of the BBM, we can define it through the Radon--Nikodym derivatives
\begin{equation}\label{eq:definition_of_Q_b_without_spine}
    \frac{\d{\Q_{\beta}|_{\cF_t}}}{\d{\P|_{\cF_t}}} = W_t(\beta),
\end{equation}
for $t \geq 0$.
Under $\Q_{\beta}$, one of the particles, called \emph{spine}, performs a Brownian motion with drift $\beta$ and splits at rate $2$ while the behavior of the other particles remains unchanged, see \ref{subsec:change_of_measure} for details.
We denote by $\E_{\Q_\beta}$ the associated expectation.

Throughout this paper, the letters $t$ and $s$ denote some points in time $[0, \infty)$.
The letters $C$ and $c$ denote constants, \ie deterministic and time-invariant quantities, which may vary from one line to another.
We set $\N = \{0,1,2,\dots\}$ and $\N^* = \N \setminus \{0\}$.
We use the Bachmann--Landau notations $o$, $O$, $\sim$, and Hardy's notation $\asymp$ with their usual meaning.
More precisely, for $f \colon \R_+ \to \R$ and $g \colon \R_+ \to \R_+^*$, we say 
that $f(t) = o(g(t))$ as $t \to \infty$ if $\lim_{t\to\infty} f(t)/g(t) = 0$, 
that $f(t) = O(g(t))$ as $t \to \infty$ if $\limsup_{t\to\infty} \abs{f(t)}/g(t) < \infty$, 
that $f(t) \sim g(t)$ as $t \to \infty$ if $\lim_{t\to\infty} f(t)/g(t) = 1$, 
and that $f(t) \asymp g(t)$ as $t \to \infty$ if $0 < \liminf_{t\to\infty} f(t)/g(t) \leq \limsup_{t\to\infty} f(t)/g(t) < \infty$.

\subsection{Main results}

In the following theorems, we describe the asymptotic behavior at high temperature of the overlap distribution and its expectation.

\begin{theorem}\label{thm:asymptotics_of_the_overlap_distribution}
    Let $0 \leq \beta < \sqrt{2}$ and $0 < a < 1$.
    \begin{enumerate}
        \item\label{it:asymptotics_of_the_overlap_distribution_1} If $0 \leq \beta < \sqrt{2}/2$, then
        \begin{equation*}
            \e^{(1 - \beta^2)at} \nu_{\beta, t}([a, 1]) \xrightarrow[t \to \infty]{} \frac{W_\infty(2\beta)}{W_\infty(\beta)^2} \Expec{W_\infty(\beta)^2}, \quad \text{\as}
        \end{equation*}
        \item\label{it:asymptotics_of_the_overlap_distribution_2} If $\beta = \sqrt{2}/2$, then
        \begin{equation*}
            \sqrt{at} \e^{at/2} \nu_{\beta, t}([a, 1]) \xrightarrow[t \to \infty]{} \sqrt{\frac{2}{\pi}} \frac{Z_\infty}{W_\infty(\beta)^2} \Expec{W_\infty(\beta)^2}, \quad \text{in probability}.
        \end{equation*}
        \item\label{it:asymptotics_of_the_overlap_distribution_3} If $\sqrt{2}/2 < \beta < \sqrt{2}$, then
        \begin{equation*}
            (at)^{3 \beta/\sqrt{2}} \e^{(\sqrt{2} - \beta)^2 at} \nu_{\beta, t}([a, 1]) \xrightarrow[t \to \infty]{} \frac{C_\beta Z_\infty^{\sqrt{2}\beta}}{W_\infty(\beta)^2} S_{2\beta}, \quad \text{in distribution},
        \end{equation*}
        where $S_{2\beta}$ is a non-degenerate $(\sqrt{2}/2\beta)$-stable random variable independent of the BBM and $C_\beta > 0$ is a constant that depends only on $\beta$.
    \end{enumerate}
\end{theorem}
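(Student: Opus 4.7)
The plan is to decompose the numerator of $\nu_{\beta, t}([a,1])$ using the branching property at time $at$: every pair $(u,v)$ of particles at time $t$ with overlap at least $a$ descends from a unique common ancestor $w \in \cN(at)$. Writing $\tilde W^{(w)}_s(\beta)$ for the normalised additive martingale of the subtree rooted at $w$, and using the identity $2\psi(\beta) - \psi(2\beta) = 1 - \beta^2$, this yields
\[
	\e^{(1-\beta^2)at}\, \nu_{\beta,t}([a,1]) = \frac{M_t^{(a,\beta)}}{W_t(\beta)^2}, \qquad M_t^{(a,\beta)} := \e^{-\psi(2\beta)\, at} \sum_{w \in \cN(at)} \e^{2\beta X_w(at)}\, \bigl(\tilde W^{(w)}_{(1-a)t}(\beta)\bigr)^2.
\]
Since $\beta < \sqrt{2}$ the denominator converges a.s.\ to $W_\infty(\beta)^2 > 0$, so the whole theorem reduces to the asymptotics of $M_t^{(a,\beta)}$.

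Conditionally on $\cF_{at}$, the variables $(\tilde W^{(w)}_{(1-a)t}(\beta))^2$ are i.i.d.\ copies of $W_{(1-a)t}(\beta)^2$, whose mean tends to $\Expec{W_\infty(\beta)^2} = 2/(1-\beta^2)$ as long as $\beta < 1$. For case~\ref{it:asymptotics_of_the_overlap_distribution_1}, the $L^4$-boundedness of $W_t(\beta)$ (valid for $\beta < \sqrt{2}/2$) combined with a conditional Chebyshev inequality gives
\[
	M_t^{(a,\beta)} = \Expec{W_\infty(\beta)^2}\, W_{at}(2\beta) + o(1) \quad \text{a.s.,}
\]
and the a.s.\ convergence $W_{at}(2\beta) \to W_\infty(2\beta)$ (using $2\beta < \sqrt{2}$) closes the case. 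Case~\ref{it:asymptotics_of_the_overlap_distribution_2} is the critical variant $\beta = \sqrt{2}/2$: the fourth moment is now only marginally divergent, so a mild truncation of the multipliers is needed to still obtain the same LLN-type approximation in probability, and inserting \eqref{eq:convergence_critical_additive_martingale} produces the $\sqrt{at}$ factor.

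Case~\ref{it:asymptotics_of_the_overlap_distribution_3} is qualitatively different: $2\beta > \sqrt{2}$ forces $W_{at}(2\beta) \to 0$, and $\Expec{W_\infty(\beta)^2} = +\infty$ as soon as $\beta \geq 1$, so the LLN strategy collapses entirely. The substitute is the stable limit theorem for supercritical additive martingales: for every $\gamma > \sqrt{2}$,
\[
	t^{3\gamma/(2\sqrt{2})}\, \e^{(\gamma/\sqrt{2} - 1)^2 t}\, W_t(\gamma) \xrightarrow[t\to\infty]{(d)} \kappa_\gamma\, Z_\infty^{\gamma/\sqrt{2}}\, S_\gamma,
\]
where $S_\gamma$ is a $(\sqrt{2}/\gamma)$-stable random variable independent of the BBM. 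Taking $\gamma = 2\beta$ and using the identity $(\sqrt{2} - \beta)^2 - (1 - \beta^2) = (\sqrt{2}\beta - 1)^2$ matches both the normalisation and the stable law $S_{2\beta}$ in the statement. To transfer this limit from $W_{at}(2\beta)$ to $M_t^{(a,\beta)}$, I would use the convergence of the extremal process of the BBM, centred at $m_{at} = \sqrt{2}\, at - \frac{3}{2\sqrt{2}}\log(at)$, to a decorated Poisson point process of intensity proportional to $Z_\infty\, \e^{-\sqrt{2} z}\, \d z$: inserting the i.i.d.\ multipliers $V_w := (\tilde W^{(w)}_{(1-a)t}(\beta))^2$ turns each cluster contribution $\sum_k \e^{2\beta \delta_k}$ into $\sum_k \e^{2\beta \delta_k} V_k$, which still has tail index $\sqrt{2}/(2\beta)$, so the stable-limit shape is preserved and only the multiplicative constant changes, yielding $C_\beta$.

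The hardest step is precisely this upgrade. At the borderline index $\alpha = \sqrt{2}/(2\beta) \in (1/2, 1)$ the moment $\Expec{V^\alpha} = \Expec{W_\infty(\beta)^{\sqrt{2}/\beta}}$ is logarithmically divergent, so no off-the-shelf stable CLT for weighted sums of heavy-tailed i.i.d.\ variables applies directly. My plan is to truncate: split $V_w = V_w \wedge T + (V_w - T)_+$, apply the stable CLT to the bounded part (whose $\alpha$-th moment is finite) to obtain a $T$-dependent limit, then use an extremal argument to show that after the rescaling $(at)^{3\beta/\sqrt{2}} \e^{(\sqrt{2}\beta - 1)^2 at}$ only $O_\P(1)$ particles satisfy $V_w > T$, so the residual contribution vanishes as $T \to \infty$; the constant $C_\beta$ is then identified from the truncated $\alpha$-moment in this limit. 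Joint convergence with $Z_\infty$, needed to go from convergence of Laplace transforms to convergence in distribution, will require upgrading the extremal-process statement to a joint one with the derivative martingale.
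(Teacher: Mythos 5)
Your high-level architecture matches the paper's: rewrite the numerator via the branching property at time $at$, handle the denominator via a.s.\ convergence, and then treat the sum over $w\in\cN(at)$ by a law-of-large-numbers argument in the subcritical cases and by a stable-limit argument in the last case. However, there are two concrete problems.

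First, your case~\ref{it:asymptotics_of_the_overlap_distribution_1} argument via $L^4$-boundedness and conditional Chebyshev fails for $\beta\in[1/2,\sqrt{2}/2)$. Writing $\Delta_t = \sum_{w}\e^{2\beta X_w(at)-\psi(2\beta)at}\bigl(W_\infty^{(w,at)}(\beta)^2-\Expec{W_\infty(\beta)^2}\bigr)$, the conditional variance given $\cF_{at}$ equals $\Var(W_\infty(\beta)^2)\,W_{at}(4\beta)\,\e^{-(2\psi(2\beta)-\psi(4\beta))at}$, and $2\psi(2\beta)-\psi(4\beta)=1-4\beta^2\leq0$ as soon as $\beta\geq1/2$, so the exponential factor does not decay (and $\E[W_{at}(4\beta)]=1$). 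The paper avoids this by using a Von Bahr--Esseen $L^p$ bound (Lemma~\ref{lem:von_Bahr}) for an exponent $p\in(1,2]$ chosen with $p<1/(2\beta^2)$, which can be strictly less than~$2$; the resulting exponent $p\psi(2\beta)-\psi(2p\beta)=(p-1)(1-2p\beta^2)$ is positive across all of $[0,\sqrt{2}/2)$. You would also need a Borel--Cantelli argument along a deterministic time grid plus a continuum-extension step to upgrade from convergence in probability to the stated a.s.\ convergence, which a single Chebyshev bound does not give.

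Second, your identification of the ``hardest step'' in case~\ref{it:asymptotics_of_the_overlap_distribution_3} rests on a false premise. You claim $\Expec{W_\infty(\beta)^{\sqrt{2}/\beta}}$ is logarithmically divergent at the index $\alpha=\sqrt{2}/(2\beta)$, but this moment is finite for every $\beta<\sqrt{2}$: one has $W_\infty(\beta)\in L^p$ for all $p<2/\beta^2$ (Lemma~\ref{lem:additive_martingale_convergence_in_Lp}), and $\sqrt{2}/\beta<2/\beta^2$ is equivalent to $\beta<\sqrt{2}$. Consequently, the truncation machinery you design to circumvent a divergent $\alpha$-moment is unnecessary. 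The paper instead identifies the limit directly via the Bolthausen--Sznitman invariance \eqref{eq:renormalized_subcritical_overlap_2_Bolthausen_Sznitman}, which absorbs the i.i.d.\ decoration-with-multiplier clusters into a deterministic shift of the Poisson points, using only finiteness of $\E[\e^{\sqrt{2}X_1}]$ (checked by Jensen plus subadditivity with an exponent $p\in(\sqrt{2}/\beta,2/\beta^2)\cap[1,2]$). The $\chi_\ell^\pm$ cutoffs the paper does use are not there to repair a moment issue but to make the test function compactly supported so the extremal-process convergence (in the vague topology, jointly with $W_t(\beta)$ via \eqref{eq:joint_convergence}) can be applied; the tail is then removed by a separate tightness lemma. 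Your observation that joint convergence with $Z_\infty$ is needed is correct, and that is exactly what \eqref{eq:joint_convergence} supplies.
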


\begin{theorem}\label{thm:asymptotics_in_mean}
    Let $0 \leq \beta < \sqrt{2}$ and $0 < a < 1$.
    As $t \to \infty$,
    \begin{enumerate}
        \item\label{it:asymptotics_in_mean_1} if $\beta = 0$, then
        \begin{equation*}
        \Expec{\nu_{\beta, t}([a, 1])} \sim 2at \e^{-at},
        \end{equation*}
        \item\label{it:asymptotics_in_mean_2} if $0 < \beta < \sqrt{2\smash{/}3}$, then $W_t(\beta)$ converges $\P$-\as and $\Q_{2\beta}$-\as to some positive and finite variable $W_\infty(\beta)$, and
        \begin{equation*}
            \Expec{\nu_{\beta, t}([a, 1])} \sim \E_{\Q_{2\beta}} \left[ \frac{1}{W_\infty(\beta)^2} \right] \Expec{W_\infty(\beta)^2} \e^{-(1 - \beta^2)at},
        \end{equation*}
        \item\label{it:asymptotics_in_mean_3} if $\beta = \sqrt{2\smash{/}3}$, then
        \begin{equation*}
            \E \left[ \nu_{\beta, t}([a, 1]) \right] \asymp t^{-1/2} \e^{-at/3},
        \end{equation*}
        \item\label{it:asymptotics_in_mean_4} if $\sqrt{2\smash{/}3} < \beta < \sqrt{2}$, then
        \begin{equation*}
            \E \left[ \nu_{\beta, t}([a, 1]) \right] \asymp t^{-3/2} \e^{-(2 - \beta^2)^2at/8\beta^2}.
        \end{equation*}
    \end{enumerate}
\end{theorem}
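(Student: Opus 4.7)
The plan is to decompose $\E[\nu_{\beta,t}([a,1])]$ according to the most recent common ancestor (MRCA) of the sampled pair and size-bias via a spine change of measure. The diagonal ($u=v$) contribution reduces to $\e^{(\beta^2-1)t}\E[W_t(2\beta)/W_t(\beta)^2]$ and is exponentially smaller than the target rate for $a<1$, so it may be dropped. For the off-diagonal part, grouping pairs $(u,v)$ by their MRCA $w$ (splitting at time $\tau_w\in[at,t]$ at position $x_w$) yields
\[
\sum_{u\neq v:\,q_t(u,v)\geq a} \e^{\beta(X_u+X_v)-2\psi(\beta)t}
= 2\sum_{w:\,\tau_w\geq at} \e^{2\beta x_w-2\psi(\beta)\tau_w}\,W^{(1)}_{t-\tau_w}(\beta)\,W^{(2)}_{t-\tau_w}(\beta),
\]
where $W^{(1)},W^{(2)}$ are the additive martingales of the two independent $\P$-subtrees descending from $w$. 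Rewriting $\e^{2\beta x_w-2\psi(\beta)\tau_w}=\e^{(\beta^2-1)\tau_w}\e^{2\beta x_w-\psi(2\beta)\tau_w}$ and compensating the internal-node sum via the spine identity for $\Q_{2\beta}$ produces
\[
\E[\nu_{\beta,t}([a,1])]\sim 2\int_{at}^t \e^{(\beta^2-1)s}\,\E_{\widetilde\Q_{2\beta}^{(s)}}\!\left[\frac{W^{(1)}_{t-s}(\beta)\,W^{(2)}_{t-s}(\beta)}{W_t(\beta)^2}\right]ds,
\]
where under $\widetilde\Q_{2\beta}^{(s)}$ the tree up to time $s$ is the $\Q_{2\beta}$-BBM with spine at $\xi_s$, and at time $s$ an extra split is inserted yielding two independent $\P$-BBM subtrees from $\xi_s$.

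For case (ii) $0<\beta<\sqrt{2/3}$, the spine contribution $\e^{(3\beta^2/2-1)t+\beta B_t}$ to $W_t(\beta)$ under $\Q_{2\beta}$ vanishes, so $W_t(\beta)\to W_\infty(\beta)<\infty$ both $\P$- and $\Q_{2\beta}$-almost surely. Taking $t\to\infty$ and then $s\to\infty$ (with appropriate uniform integrability), the denominator converges to $W_\infty(\beta)^2$ carrying its $\Q_{2\beta}$-law, while $W^{(1)}_{t-s},W^{(2)}_{t-s}$ become \iid $\P$-copies of mean $1$ independent of this limit, giving $\E_{\widetilde\Q_{2\beta}^{(s)}}[W^{(1)}W^{(2)}/W_t(\beta)^2]\to\E_{\Q_{2\beta}}[1/W_\infty(\beta)^2]$. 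Combined with $\int_{at}^t\e^{(\beta^2-1)s}\,ds\sim\e^{-(1-\beta^2)at}/(1-\beta^2)$ and the identity $\E[W_\infty(\beta)^2]=2/(1-\beta^2)$ (derived from the same many-to-two formula without the denominator), this proves (ii). Case (i) $\beta=0$ requires a direct treatment because $\E[1/W_\infty(0)^2]=\infty$: conditioning on $\cF_{at}$ and rearranging the pair sum by common ancestor at time $at$ gives
$\P(q_t(U,V)\geq a\mid\cF_{at})=\abs{\cN(at)}^{-2}\sum_{v\in\cN(at)}\abs{T_t(v)}^2$,
where $T_t(v)$ is the time-$t$ subtree of $v$; a law of large numbers on the \iid geometric subtree sizes shows this is asymptotically $2/\abs{\cN(at)}$, and the harmonic-series evaluation $\E[1/\abs{\cN(at)}]\sim at\,\e^{-at}$ (from the geometric law of $\abs{\cN(at)}$) yields (i).

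For cases (iii)--(iv) with $\beta\geq\sqrt{2/3}$, the spine contribution no longer decays under $\Q_{2\beta}$ and $W_t(\beta)^2$ blows up, so the inner expectation decays in $s$ and must be quantified precisely rather than treated as a constant. Conditioning on the spine position $\xi_s=2\beta s+B_s$ and replacing $W_t(\beta)$ by its leading spine-dominated behavior, the integrand becomes a Gaussian exponential in $B_s$ and $s$; a Laplace/saddle-point analysis of the resulting integral over $s\in[at,t]$ (with interior optimum for $\beta>\sqrt{2/3}$ and boundary optimum at $s=at$ for $\beta=\sqrt{2/3}$) produces the exponent $(2-\beta^2)^2a/(8\beta^2)$. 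The $t^{-3/2}$ prefactor in (iv) arises from the combined Gaussian fluctuations of the MRCA time and position around the optimizer, while the $t^{-1/2}$ in (iii) reflects the critical boundary behavior at $\beta=\sqrt{2/3}$. The main obstacle in these regimes is obtaining uniform-in-$s$ tail and moment control on the ratio $W^{(1)}W^{(2)}/W_t(\beta)^2$ under $\widetilde\Q_{2\beta}^{(s)}$ --- sharp enough to survive the blow-up of the denominator --- in order to legitimize the Laplace asymptotics in (iv) and to match upper and lower bounds in the critical case (iii).
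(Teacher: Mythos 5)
Your decomposition by the MRCA branching time $\tau_w \in [at,t]$ is genuinely different from the paper's, which instead factors the double sum through the common ancestor alive at the \emph{fixed} time $at$: $\nu_{\beta,t}([a,1]) = \e^{(\beta^2-1)at}W_t(\beta)^{-2}\sum_{u\in\cN(at)}\e^{2\beta X_u(at)-\psi(2\beta)at}W^{(u,at)}_{t-at}(\beta)^2$. The fixed-time version avoids both your extra $\d s$-integral and your nonstandard ``insert-a-split'' two-spine measure $\tilde\Q_{2\beta}^{(s)}$: the single-spine $\Q_{2\beta,at}$ does all the work, and the diagonal pairs never have to be split off and bounded separately. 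Your route is viable in principle and your first-order bookkeeping is correct (the factor $2$, $\E[W_\infty(\beta)^2]=2/(1-\beta^2)$, the $\beta=0$ harmonic sum), but essentially the entire content of the proof of (ii) --- showing that $\E_{\Q_{2\beta}}[W_t(\beta)^{-2}]\to\E_{\Q_{2\beta}}[W_\infty(\beta)^{-2}]\in(0,\infty)$ and that replacing $W_t(\beta)^{-2}$ by $W_{at}(\beta)^{-2}$ is harmless --- is compressed into ``(with appropriate uniform integrability).'' That is the hard part: $W_\infty(\beta)^{-1}$ is not $\P$-integrable for any $\beta<\sqrt2$, so the uniform integrability of $W_t(\beta)^{-2}$ under $\Q_{2\beta}$ has to be extracted from the spine skeleton via negative-moment estimates along the spine (which is exactly where the condition $\beta<\sqrt{2/3}$ enters), and the ``replacement of the denominator'' step in the paper is long and delicate. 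Nothing in your outline indicates how you would obtain this.

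For (iii)--(iv) the gap is more than a missing estimate: the mechanism you describe is wrong. You attribute the prefactors to ``Gaussian fluctuations of the MRCA time and position around the optimizer,'' i.e.\@ a 2D saddle-point. In fact the $t^{-1/2}$ in (iii) and $t^{-3/2}$ in (iv) come from a \emph{ballot-type constraint}: to keep $W_t(\beta)$ from blowing up, the spine path must stay essentially below the straight barrier $s\mapsto v(\beta)s$ with $v(\beta)=1/\beta+\beta/2$ on $[0,at]$, and for $\beta>\sqrt{2/3}$ the numerator additionally pins the endpoint at $v(\beta)at+O(1)$; the unpinned ballot probability scales as $t^{-1/2}$ and the pinned one as $t^{-3/2}$ (the Brownian estimates in Section~2.5). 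Your proposed replacement of $W_t(\beta)$ ``by its leading spine-dominated behavior'' is circular here: which term of the spine decomposition dominates $W_t(\beta)$ is exactly the question, and it is governed by the same fluctuation that the ballot constraint suppresses. Without first identifying $v(\beta)$ as the balance point between $2\beta$ (optimal for the numerator) and $\psi(\beta)/\beta$ (the slope above which the denominator explodes), and without reformulating the denominator control as a barrier event, the Laplace analysis does not produce the correct polynomial order --- and in the upper bound one still needs the negative-moment machinery from (ii) to turn the barrier heuristic into an actual inequality. You acknowledge a gap in the last sentence, but the ballot/barrier structure is a missing \emph{idea}, not merely an unchecked moment bound.
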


The exponents appearing in the exponential decay of the overlap distribution in Theorems \ref{thm:asymptotics_of_the_overlap_distribution} and \ref{thm:asymptotics_in_mean} are represented in Figure \ref{fig:exponents}.
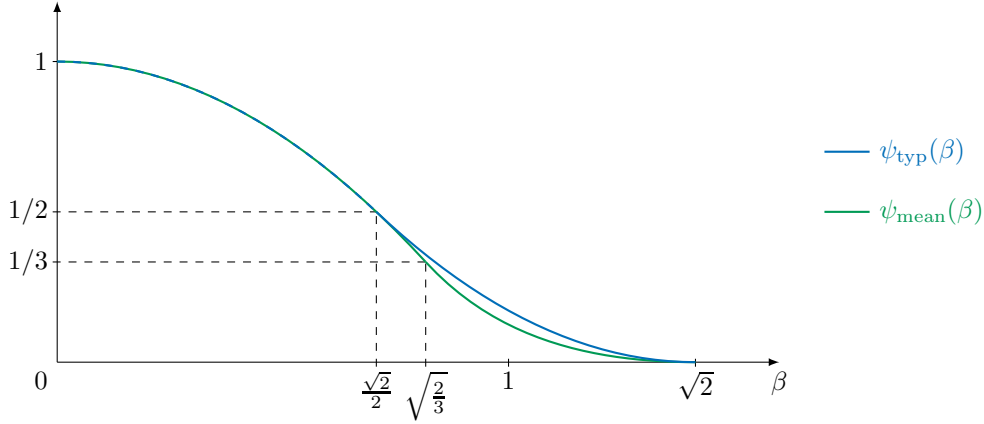
\begin{figure}
	\centering
	\begin{tikzpicture}[xscale=6,yscale=4]
		% Axes
		\draw[->,>=latex] (0,0) -- (1.6,0) node[below]{$\beta$};
		\draw[->,>=latex] (0,0) -- (0,1.2);
        \draw (0,0) node[below left]{$0$};
		% Traits axe des abscisses
		\draw[dashed] ({sqrt(2)/2},-.015) -- ({sqrt(2)/2},1/2);
		\draw ({sqrt(2)/2},0) node[below]{$\frac{\sqrt{2}}{2}$};
		\draw[dashed] ({sqrt(2/3)},-.015) -- ({sqrt(2/3)},1/3);
		\draw ({sqrt(2/3)},0) node[below]{$\sqrt{\frac{2}{3}}$};
        \draw[dashed] (1,-.015) -- (1,.015);
		\draw (1,0) node[below]{$1$};
		\draw[dashed] ({sqrt(2)},-.015) -- ({sqrt(2)},.015);
		\draw ({sqrt(2)},0) node[below]{$\sqrt{2}$};
        % Traits axe des ordonnées
        \draw[dashed] (-.01,1/3) -- ({sqrt(2/3)},1/3);
		\draw (0,1/3) node[left]{$1/3$};
        \draw[dashed] (-.01,1/2) -- ({sqrt(2)/2},1/2);
		\draw (0,1/2) node[left]{$1/2$};
		\draw (-.01,1) -- (.01,1);
		\draw (0,1) node[left]{$1$};
		% Mean overlap distribution
		\draw[domain=0:sqrt(2/3),ForestGreen,thick] plot (\x, 1-\x*\x);
		\draw[domain=sqrt(2/3):sqrt(2),ForestGreen,thick] plot (\x, {(2-\x*\x)*(2-\x*\x)/(8*\x*\x)});
		% Mean overlap distribution
		\draw[domain=0:sqrt(2)/2,NavyBlue,thick,dashed] plot (\x, 1-\x*\x);
		\draw[domain=sqrt(2)/2:sqrt(2),NavyBlue,thick] plot (\x, {(sqrt(2)-\x)*(sqrt(2)-\x)});
        % Légende
        \draw[NavyBlue,thick] (1.7,.7) -- (1.8,.7) node[right]{$\psi_{\text{typ}}(\beta) $};
        \draw[ForestGreen,thick] (1.7,.5) -- (1.8,.5) node[right]{$\psi_{\text{mean}}(\beta) $};
	\end{tikzpicture}
    \caption{Graph of the functions $\psi_{\text{typ}}$ and $\psi_{\text{mean}}$ such that, for any $a\in(0,1)$ and up to polynomial factors, 
    $\nu_{\beta, t}([a, 1])$ decays like $\e^{-\psi_{\text{typ}}(\beta) at}$ 
    and $\Expec{\nu_{\beta, t}([a, 1])}$ decays like $\e^{-\psi_{\text{mean}}(\beta) at}$.
    \label{fig:exponents}}
\end{figure}

\begin{remark} \label{rem:rewriting}
    If $0 \leq \beta < \sqrt{2}/2$, the additive martingale $W_t(2\beta)$ is uniformly integrable and therefore the measure $\Q_{2\beta}$ is uniformly continuous w.r.t.\@ $\P$ with Radon--Nikodym derivative $W_\infty(2\beta)$.    
    In particular, if $0 < \beta < \sqrt{2}/2$, we can rewrite Theorem~\ref{thm:asymptotics_in_mean}.\ref{it:asymptotics_in_mean_2} as
    \begin{equation}\label{eq:asymptotics_in_mean_2_reformulation}
        \e^{(1 - \beta^2)at} \Expec{\nu_{\beta, t}([a, 1])} \xrightarrow[t \to \infty]{} \Expec{\frac{W_\infty(2\beta)}{W_\infty(\beta)^2}} \Expec{W_\infty(\beta)^2}.
    \end{equation}
    Note that this limit is the expectation of the \as limit in Theorem~\ref{thm:asymptotics_of_the_overlap_distribution}.\ref{it:asymptotics_of_the_overlap_distribution_1}.
    This no longer holds if $\sqrt{2}/2 \leq \beta < \sqrt{2\smash{/}3}$ since the right-hand side of~\eqref{eq:asymptotics_in_mean_2_reformulation} becomes zero.
    The probability measure $\Q_{2\beta}$ then appears as the appropriate framework to extend this formula.
\end{remark}

\subsection{Comments, related literature and heuristics}
\label{subsec:comments}

\paragraph{Motivations.}
As mentioned earlier, our motivation for studying the overlap comes from the physics literature.
In mean-field spin glass models such as the Sherrington--Kirkpatrick model, the overlap between two configurations is defined as an explicit function of the correlation between the energies of these configurations, and the overlap distribution plays a key role in the interpretation of Parisi's formula for the free energy \cite{Parisi1983} and of the ultrametric structure of the Gibbs measure in the infinite-volume limit \cite{MezardParisiSourlasToulouseVirasoro1984}.
To understand the free energy and the overlap distribution on more tractable examples, Derrida and Gardner \cite{DerridaGardner1986} introduced the Generalized Random Energy Model (GREM), where the ultrametric structure is directly implemented in the definition of the Hamiltonian at finite volume: more precisely, the energies have a hierarchical structure of correlations with a fixed number of levels, whereas the BBM corresponds to a number of levels growing with the number of particles.
As indicated above, the overlap distribution for the BBM has been first studied in physics by Derrida and Spohn \cite{DerridaSpohn1988}, motivated by both spin glass and polymer theories. 
Indeed, the BBM can also be seen as an infinite limit of the directed polymer model and, in this framework, the overlap distribution describes the proportion of time where two polymers chosen independently according to the Gibbs measure are overlapping.

More recently, Derrida and Mottishaw \cite{DerridaMottishaw2016} investigated the finite-size corrections for the mean overlap distribution in the critical and supercritical phases $\beta \geq \sqrt{2}$: to be more precise than the known convergence $\E[\nu_{\beta,t}] \to \sqrt{2}/\beta \delta_0 + (1-\sqrt{2}/\beta) \delta_1$ as $t \to\infty$, they describe how the mass of $\E[\nu_{\beta,t}]$ between 0 and 1 vanishes. 
More precisely, they conjecture that, for any $0<a<b<1$,
\begin{equation} \label{eq:Derrida-Mottishaw}
    \E[\nu_{\beta,t}([a,b])] 
    \sim \frac{1}{\beta \sqrt{2 \pi t}} \int_a^b \frac{\d{x}}{x^{3/2} (1-x)^{(1+2\cdot \mathds{1}_{\beta>\sqrt{2}})/2}}.
\end{equation}
This prediction is based both on numerical simulations and on an approximation by the GREM, for which Derrida and Mottishaw \cite{DerridaMottishaw2018} calculated the finite-size corrections of the overlap as well.
In the case $\beta>\sqrt{2}$, the asymptotic~\eqref{eq:Derrida-Mottishaw} also has an interpretation in high-energy particle physics, uncovered by Mueller and Munier \cite{MuellerMunier2018a,MuellerMunier2018b}: using a link between the BBM and the scattering of high-energy hadrons dating back to \cite{IancuMuellerMunier2005}, they show a link between the statistics of rapidity gaps in diffractive electron-nucleus scattering and statistics of the extremal particles of the BBM, including their overlap distribution, which is well-predicted by~\eqref{eq:Derrida-Mottishaw} because supercritical Gibbs measures are supported by extremal particles.

In this paper, we study this question in the subcritical phase $\beta < \sqrt{2}$ instead, and investigate moreover the behavior of the typical overlap distribution.
Observe that the exponent appearing in the exponential decay of the mean overlap distribution is vanishing as $\beta$ approaches $\sqrt{2}$ (see Figure \ref{fig:exponents}), which is consistent with the fact that the decay is expected to be polynomial only for $\beta \geq \sqrt{2}$.
This is also the case for the typical overlap distribution, so we also expect a polynomial decay for $\beta \geq \sqrt{2}$ in that case.
However, we expect the typical overlap distribution to decay faster than the mean overlap distribution when $\beta \geq \sqrt{2}$, as it is the case for $\beta \in [\sqrt{2}/2,\sqrt{2})$, which means that $\E[\nu_{\beta,t}([a,b])]$ is governed by a rare event (called ``fluctuation'' in \cite{MuellerMunier2018a}).

\paragraph{Typical overlap distribution.}
Most heuristics can be deduced from the following rewriting of the overlap distribution (see~\eqref{eq:overlap_rewriting} for details)
\begin{equation}\label{eq:overlap_rewriting_intro}
    \nu_{\beta, t}([a, 1]) = \frac{\e^{(\beta^2 - 1) at}}{W_t(\beta)^2} \sum_{w \in \cN(at)} \e^{2\beta X_w(at) - \psi(2\beta)at} W_{t - at}^{(w, at)}(\beta)^2,
\end{equation}
where $(W_s^{(w, at)}(\beta))_{s \geq 0}$ is the additive martingale of the BBM starting from particle $w$ at time $at$.
Note that, in the sum of~\eqref{eq:overlap_rewriting_intro}, these additive martingales are tilted by Gibbs weights at inverse temperature $2\beta$ and time $at$.
Consequently, the particles $u,v\in\cN(t)$ that mainly contribute to the overlap distribution (see the expression in~\eqref{eq:def_overlap_dist}) are those contributing to $W_{t-at}^{(w, at)}(\beta)$, with $w$ itself being among the particles contributing to the Gibbs measure $\cG_{2\beta, at}$, or equivalently, to the additive martingale $W_{at}(2\beta)$.
The description of these particles' behaviors follows from this.

There is a first regime $\beta \in [0, \sqrt{2}/2)$ for which the Gibbs measure $\cG_{2\beta, at}$ is subcritical, in the sense that it is supported by an exponential number of particles.
These ones perform a Brownian motion with drift $2\beta$ (see \cite[Theorem~4.2]{Chataignier2024} for a weaker version of this property and \cite[Eq.\@ (1.14)]{Pain2018} for the case of the BRW).
So do the particles that contribute to the overlap distribution, up to time $at$, after which their trajectories relax to a drift $\beta$ (see Figure~\ref{fig:contributing_particles_1}).
The large amount of such particles induces a law of large numbers that allows to approximate~\eqref{eq:overlap_rewriting_intro} with
\begin{equation}\label{eq:overlap_approximation}
    \frac{\e^{(\beta^2 - 1) at}}{W_t(\beta)^2} \sum_{w \in \cN(at)} \e^{2\beta X_w(at) - \psi(2\beta)at} \Expec{W_\infty(\beta)^2} = \frac{\e^{(\beta^2 - 1) at}}{W_t(\beta)^2} W_{at}(2\beta) \Expec{W_\infty(\beta)^2}.
\end{equation}
Theorem~\ref{thm:asymptotics_of_the_overlap_distribution}.\ref{it:asymptotics_of_the_overlap_distribution_1} then follows from the convergence of the additive martingales.

\begin{figure}
    \centering
    % Variables
    \tikzmath{\t=2;\a=.65;\bc=sqrt(2);\b1=.4;\b2=.92;\b3=1;}
    \begin{subfigure}[b]{.45\textwidth}
        \begin{tikzpicture}[xscale=2.7,yscale=2]
            % BBM
            \fill[fill=lightgray!30] (0,0) -- (\t,{\bc*\t}) -- (\t,0) -- (0,0);
            \draw[Gray] (0,0) -- (\t,{\bc*\t});
            % Axis
            \draw[->,>=latex] (0,0) -- (2.4,0);
            \draw[->,>=latex] (0,-.1) -- (0,3.5);
            \draw (0,0) node[below left]{$0$};
            % Lines from abscissa
            \draw[dashed] ({\a*\t},-.025) -- ({\a*\t},{2*\b1*\a*\t});
            \draw ({\a*\t},0) node[below]{$at$};
            \draw[dashed] (\t,-.025) -- (\t,0);
            \draw (\t,0) node[below]{$t$};
            % Lines from ordinate
            \draw[dashed] (-.025,{\bc*\t}) -- (\t,{\bc*\t});
            \draw (0,{\bc*\t}) node[left]{$\sqrt{2}t$};
            % Drift
            \draw[NavyBlue] (0,0) -- ({\a*\t},{2*\b1*\a*\t}) -- (\t,{\b1*\t*(1+\a)});
            \draw[<-,>=latex,NavyBlue] ({\a+.1},{2*\b1*(\a+.1)+.1}) -- (\a,{2*\b1*(\a+.1)+.7});
            \draw[NavyBlue] (\a,{2*\b1*(\a+.1)+.7}) node[above]{slope $2\beta$};
            \draw[<-,>=latex,NavyBlue] ({\t*(1+\a)/2},{(2*\b1*\a*\t+\b1*(\t-\a*\t)/2)-.1}) -- ({\t*(1+\a)/2},{(2*\b1*\a*\t+\b1*(\t-\a*\t)/2)-.5});
            \draw[NavyBlue] ({\t*(1+\a)/2},{(2*\b1*\a*\t+\b1*(\t-\a*\t)/2)-.5}) node[below]{slope $\beta$};
            % Common trajectory
            \draw[smooth] plot coordinates
            {(0,0) (.1,-.03) (.2,.19) (.3,.16) (.4,.37) (.5,.31) (.6,.56) (.7,.53) (.8,.71) (.9,.58) (1,.9) (1.1,.81) (1.2,1.03)
            % Trajectory 1
            (1.3,.96) (1.4,1.14) (1.5,1.06) (1.6,1.13) (1.7,1.09) (1.8,1.19) (1.9,1.07) (2,1.24)};
            % Trajectory 2
            \draw[smooth] plot coordinates
            {(1.3,.96) (1.4,.92) (1.5,1.24) (1.6,1.16) (1.7,1.34) (1.8,1.29) (1.9,1.44) (2,1.42)};
            % Particles
            \draw (2,1.42) node{$\bullet$};
            \draw (2,1.42) node[right]{$u$};
            \draw (2,1.24) node{$\bullet$};
            \draw (2,1.24) node[right]{$v$};
            \draw (1.3,.96) node{$\bullet$};
            \draw (1.3,.96) node[below left]{$w$};
        \end{tikzpicture}
        \caption{Case $0 \leq \beta < \sqrt{2}/2$. Particles that contribute to $\nu_{\beta, t}([a, 1])$ have drift~$2\beta$ until time $at$ and then have drift~$\beta$.\label{fig:contributing_particles_1}}
    \end{subfigure}
    \hfill
    \begin{subfigure}[b]{.45\textwidth}
        \begin{tikzpicture}[xscale=2.7,yscale=2]
            % BBM
            \fill[fill=lightgray!30] (0,0) -- (\t,{\bc*\t}) -- (\t,0) -- (0,0);
            \draw[Gray] (0,0) -- (\t,{\bc*\t});
            % Axis
            \draw[->,>=latex] (0,0) -- (2.4,0);
            \draw[->,>=latex] (0,-.1) -- (0,3.5);
            \draw (0,0) node[below left]{$0$};
            % Lines from abscissa
            \draw[dashed] ({\a*\t},-.025) -- ({\a*\t},{2*\b2*\a*\t});
            \draw ({\a*\t},0) node[below]{$at$};
            \draw[dashed] (\t,-.025) -- (\t,0);
            \draw (\t,0) node[below]{$t$};
            % Lines from ordinate
            \draw[dashed] (-.025,{\bc*\t}) -- (\t,{\bc*\t});
            \draw (0,{\bc*\t}) node[left]{$\sqrt{2}t$};
            % Drift
            \draw[dashed,NavyBlue] (0,0) -- ({\a*\t},{2*\b2*\a*\t}) -- (\t,{\b2*\t*(1+\a)});
            \draw[NavyBlue] (0,0) -- ({\a*\t},{\bc*\a*\t}) -- (\t,{(\bc*\a+\b2*(1-\a))*\t});
            \draw[<-,>=latex,NavyBlue] ({\a+.1},{\bc*(\a+.1)+.1}) -- (\a,{\bc*(\a+.1)+.7});
            \draw[NavyBlue] (\a,{\bc*(\a+.1)+.7}) node[above]{$\sqrt{2}$};
            \draw[<-,>=latex,NavyBlue] ({\t*(1+\a)/2},{(\bc*\a*\t+\b2*(\t-\a*\t)/2)-.1}) -- ({\t*(1+\a)/2},{(\bc*\a*\t+\b2*(\t-\a*\t)/2)-.7});
            \draw[NavyBlue] ({\t*(1+\a)/2},{(\bc*\a*\t+\b2*(\t-\a*\t)/2)-.7}) node[below]{$\beta$};
            % Common trajectory
            \draw[smooth] plot coordinates
            {(0,0) (.1,-.03) (.2,.19) (.3,.19) (.4,.39) (.5,.44) (.6,.64) (.7,.61) (.8,.94) (.9,.96) (1,1.27) (1.1,1.37) (1.2,1.64)
            % Trajectory 1
            (1.3,1.76) (1.4,1.89) (1.5,1.84) (1.6,2.06) (1.7,2.09) (1.8,2.24) (1.9,2.17) (2,2.34)};
            % Trajectory 2
            \draw[smooth] plot coordinates
            {(1.3,1.76) (1.4,1.82) (1.5,2.09) (1.6,2.09) (1.7,2.31) (1.8,2.32) (1.9,2.31) (2,2.57)};
            % Particles
            \draw (2,2.57) node{$\bullet$};
            \draw (2,2.57) node[right]{$u$};
            \draw (2,2.34) node{$\bullet$};
            \draw (2,2.34) node[right]{$v$};
            \draw (1.3,1.76) node{$\bullet$};
            \draw (1.3,1.76) node[below right]{$w$};
        \end{tikzpicture}
        \caption{Case $\sqrt{2}/2 < \beta < \sqrt{2}$. Particles that contribute to $\nu_{\beta, t}([a, 1])$ are near the top at time $at$ and then have drift~$\beta$.\label{fig:contributing_particles_2}}
    \end{subfigure}
    \caption{Behaviors of the particles that contribute typically to the overlap distribution.}
\end{figure}

At $\beta = \sqrt{2}/2$, the Gibbs measure $\cG_{2\beta, at}$ is critical.
It is supported by a number of order $\e^{C\sqrt{t}}$ of particles, having a drift $\sqrt{2} = 2\beta$ altered by an upper barrier of slope $\sqrt{2}$ (see \cite{Madaule2016} for the BRW).
The same holds up to time $at$ for the overlap distribution.
Theorem~\ref{thm:asymptotics_of_the_overlap_distribution}.\ref{it:asymptotics_of_the_overlap_distribution_2} can be seen as a consequence of the approximation~\eqref{eq:overlap_approximation} and the convergence~\eqref{eq:convergence_critical_additive_martingale}.

In the remaining regime $\beta \in (\sqrt{2}/2, \sqrt{2})$, the Gibbs measure $\cG_{2\beta, at}$ is supported by the extremal process, \ie particles at a bounded distance from the top of the BBM.
Up to logarithmic corrections, their trajectories are Brownian bridges from $(0,0)$ to $(at,\sqrt{2}at)$ conditioned to stay under a barrier of slope $\sqrt{2}$ (see \cite{ChenMadauleMallein2019} for the BRW).
This again induces the behavior of the particles that contribute to the overlap distribution (see Figure~\ref{fig:contributing_particles_2}).
The $\alpha$-stable limit with $\alpha = \sqrt{2}/2\beta \in (1/2, 1)$ is reminiscent of the limit appearing for the renormalized supercritical additive martingale, obtained by Barral, Rhodes and Vargas \cite{BarralRhodesVargas2018} for the BRW (see \cite[Remark~2.7]{Bonnefont2022} for the BBM case): for $\beta > \sqrt{2}$,
\begin{equation}\label{eq:supercritical_W}
    t^{3\beta/2\sqrt{2}} \e^{(\beta/\sqrt{2}-1)^2t} W_t(\beta)
    \xrightarrow[t\to\infty]{} Z_\infty^{\beta/\sqrt{2}} S_\beta, \quad \text{in distribution},
\end{equation}
where $S_{\beta}$ is a non-degenerate $\alpha$-stable random variable independent of $Z_\infty$, with $\alpha = \sqrt{2}/\beta \in (0,1)$.
Here, it is not exactly $W_{at}(2\beta)$ which appears in~\eqref{eq:overlap_rewriting_intro} because of the additional factors $W_{t - at}^{(u, at)}(\beta)^2$, but we show that they do not affect the stable nature of the limit.

The transition at $\beta = \sqrt{2}/2$ described above is similar to the one appearing in the fluctuations of the additive martingales, obtained by Iksanov, Kolesko and Meiners \cite{IksanovKoleskoMeiners2020} for the BRW (see~\cite{Chataignier2024} for the adaptation to the BBM case, and also \cite{IksanovKabluchko2016,HartungKlimovsky2018} for previous partial result).
This fluctuations are obtained using the following decomposition
\begin{equation} \label{eq:fluctuations}
    W_\infty(\beta) - W_t(\beta) 
    = \sum_{u\in\cN(t)} \e^{\beta X_u(t) - \psi(\beta) t} (W_\infty^{(u)}(\beta) - 1),
\end{equation}
where the main difference with~\eqref{eq:overlap_rewriting_intro} is that this is a sum of centered variables, whereas the terms in~\eqref{eq:overlap_rewriting_intro} are positive.
However, similarly to the case of the overlap, for $\beta \in (0,\sqrt{2}/2)$, the sum in~\eqref{eq:fluctuations} is dominated by particles contributing to $\cG_{2\beta,t}$ (\ie particles $u\in\cN(t)$ such that $X_u(t) = 2\beta t + O(\sqrt{t})$) and the limit, after rescaling, is normally distributed with variance $c W_\infty(2\beta)$.
Furthermore, for $\beta \in (\sqrt{2}/2,\sqrt{2})$, the sum is dominated by extremal particles and, as noted in \cite{Chataignier2024}, the limit is $\alpha$-stable with $\alpha = \sqrt{2}/\beta \in (1, 2)$.
In the latter regime, we apply methods similar to those used in \cite{IksanovKoleskoMeiners2020}.

We finally mention other results on fluctuation of additive martingales, which are not directly related to our results on the overlap, but which completes the picture presented above.
In the critical case $\beta=\sqrt{2}$, fluctuations of the derivative martingale (defined in~\eqref{eq:definition_derivative_martingale}) around its limit have been obtained in \cite{MaillardPain2019} for the BBM and in \cite{HouRenSong2024,BuraczewskiIksanovMallein2021} for the BRW, with a spectrally positive 1-stable limit.
In \cite{MaillardPain2021} for the BBM, the fluctuations in the convergence~\eqref{eq:convergence_critical_additive_martingale} for $W_t(\sqrt{2})$ are showed to be Cauchy distributed.
The case of a complex valued $\beta$ has also been investigated.
In the whole region where $W_t(\beta)$ converges in $L^1$, the complex-stable fluctuations around its limit have been obtained by \cite{IksanovKoleskoMeiners2020} for the BRW (see also the previous work \cite{HartungKlimovsky2018} which covered the case $\re\beta \in (0,\sqrt{2}/2)$ where the limit is Gaussian).
Another region, called the glassy phase, extends the convergence~\eqref{eq:supercritical_W} and has been treated for the BBM in
\cite{MadauleRhodesVargas2015,HartungKlimovsky2015}.
Finally, the last region, where $\abs{\beta} \geq 1$ and $\re\beta \leq \sqrt{2}/2$, has been covered in \cite{HartungKlimovsky2018}: in that case, oscillations due to the imaginary part of $\beta$ are so strong that $W_t(\beta)$ does not converge, but, with a proper rescaling, it has a complex Gaussian limit with variance $c W_\infty(2\re\beta)$, which is similar to the case $\beta \in (0,\sqrt{2}/2)$ mentioned above.

\paragraph{Mean overlap distribution.}
It is surprising that, in terms of the mean overlap distribution, the threshold at $\beta = \sqrt{2}/2$ disappears, and this new threshold at $\beta = \sqrt{2\smash{/}3}$ appears. We do not know other phenomena for the BBM where this threshold plays a role.
Moreover, let us emphasize here that the proof of Theorem~\ref{thm:asymptotics_in_mean} is more involved than the one of Theorem~\ref{thm:asymptotics_of_the_overlap_distribution}, even in the case $\beta \in (0,\sqrt{2}/2)$, partly due to the fact that $1/W_\infty(\beta)$ is not integrable for any $\beta \in [0,\sqrt{2})$, so the denominator $W_t(\beta)^2$ in~\eqref{eq:overlap_rewriting_intro} has to be handled carefully.
Indeed, an event which could make the mean overlap much larger than its typical value is the event where $W_t(\beta)$ is atypically small: this is exactly what happens in the case $\beta=0$ and makes it special. However, for $\beta>0$, it is implicitly justified by our proof that this event is irrelevant in the behavior of the mean overlap, because in that case the numerator in~\eqref{eq:overlap_rewriting_intro} becomes small enough to compensate the denominator.

We now give some crude heuristics for the behavior of the mean overlap when $\beta \in (0,\sqrt{2})$. 
Since we have already excluded events where the denominator in~\eqref{eq:overlap_rewriting_intro} is small, we now wonder if the mean of the numerator is governed by a rare event or not.
First note that a particle $w \in \cN(at)$ contributes to the numerator in~\eqref{eq:overlap_rewriting_intro} proportionally to 
\begin{equation} \label{eq:contrib_w}
\e^{2\beta X_w(at)} W_{t-at}^{(w,at)}(\beta)^2, 
\end{equation}
where the two factors are independent.
By Girsanov theorem, the event dominating in $\E[\e^{2\beta X_w(at)}]$ is the one where $(X_w(s))_{s\in[0,at]}$ behaves like a Brownian motion with drift $2\beta$, and there are typically such particles $w$ in the BBM if and only if $\beta < \sqrt{2}/2$.
Moreover, in that case, $W_{t-at}^{(w,at)}(\beta)^2$ is bounded in $L^1$.
This means that, in the case $\beta \in (0,\sqrt{2}/2)$, the expectation of the numerator in~\eqref{eq:overlap_rewriting_intro} is not governed by a rare event and therefore the behavior of $\Expec{\nu_{\beta, t}([a, 1])}$ is simply obtained by taking the expectation of the typical behavior (see Remark~\ref{rem:rewriting}).

As soon as $\beta \in [\sqrt{2}/2,\sqrt{2})$, $\Expec{\nu_{\beta, t}([a, 1])}$ starts being dominated by a rare event where some particle $w\in\cN(at)$ has an atypically large contribution~\eqref{eq:contrib_w}.
We now argue that, in order to have a large contribution~\eqref{eq:contrib_w}, the best strategy is to have a high position $X_w(at)$ and not a large value of $W_{t-at}^{(w,at)}(\beta)$.
The subtlety is that this fact is not always true, it depends on how large the contribution should be, so we first restrict the range of this contribution.
Note that a particle $w \in \cN(at)$ also contributes to the denominator in~\eqref{eq:overlap_rewriting_intro}: its contribution to $W_t(\beta)$ equals $\e^{\beta X_w(at) - \psi(\beta)at} W_{t - at}^{(w, at)}(\beta)$.
Since $W_t(\beta)$ is typically of order 1, this contribution dominates as soon as it becomes much larger than 1, and then the numerator and the denominator compensate each other: therefore, it is not useful to force $\e^{\beta X_w(at) - \psi(\beta)at} W_{t - at}^{(w, at)}(\beta)$ to be much larger than one, because $\nu_{\beta, t}([a, 1])$ saturates at 1.
So we want $\e^{\beta X_w(at)} W_{t-at}^{(w, at)}(\beta)$ to be large but not larger than $\e^{\psi(\beta)at}$. 
Now note that, for some fixed $\lambda > 0$, we have
\begin{equation*}
    \P \left( \e^{\beta X_w(at)} \geq \e^{\lambda at} \right)
    \simeq \e^{-\lambda^2 at/2\beta^2} 
    \qquad \text{and} \qquad 
    \P \left( W_{t - at}^{(w, at)}(\beta) \geq \e^{\lambda at} \right) \lesssim \e^{-2\lambda at/\beta^2},
\end{equation*}
where first claim is a Gaussian tail estimate and the second one is formally stated in Lemma~\ref{lem:additive_martingale_convergence_in_Lp}.
It follows that, on the event $\e^{\beta X_w(at)} W_{t-at}^{(w, at)}(\beta) \simeq \e^{\lambda at}$, we typically have $\e^{\beta X_w(at)} \simeq \e^{\lambda at}$ as soon as $\lambda<4$, which is the case in the range that we are considering ($\psi(\beta) < \psi(\sqrt{2}) = 2$). This justifies the claim presented at the beginning of this paragraph.

We can now give heuristics to explain the appearance of the threshold at $\beta = \sqrt{2\smash{/}3}$. 
As mentioned earlier, by Girsanov theorem, the event dominating in $\E[\e^{2\beta X_w(at)}]$ is the one where $X_w(at) = 2\beta at + O(\sqrt{t})$, however we also have the constraint that we do not want to make $\e^{\beta X_w(at)}$ larger than $\e^{\psi(\beta)at}$ otherwise the denominator in~\eqref{eq:overlap_rewriting_intro} starts to play a role and $\nu_{\beta, t}([a, 1])$ saturates at 1.
Therefore, particles $w\in \cN(at)$ contributing mostly to $\Expec{\nu_{\beta, t}([a, 1])}$ are at a position roughly $v(\beta) at$, where $v(\beta)$ is given by
\begin{equation}
    v(\beta) = 2\beta \wedge \frac{\psi(\beta)}{\beta} 
    = \begin{cases}
        2 \beta & \text{if } \beta \in (0,\sqrt{2\smash{/}3}], \\
        \frac{1}{\beta} + \frac{\beta}{2} & \text{if } \beta \in [\sqrt{2\smash{/}3}, \sqrt{2}).
    \end{cases}
\end{equation}
See Figure \ref{fig:v(beta)} for the graph of function $v$.
Hence, the threshold at $\beta = \sqrt{2\smash{/}3}$ corresponds to the inverse temperature above which, in mean, particles contributing mostly to the numerator in~\eqref{eq:overlap_rewriting_intro} also contributes significantly to the denominator.

\begin{figure}
    \centering
    \begin{tikzpicture}[xscale=5,yscale=3]
        % Axis
        \draw[->,>=latex] (0,0) -- (1.6,0) node[below]{$\beta$};
        \draw[->,>=latex] (0,0) -- (0,1.7);
        \draw (0,0) node[below left]{$0$};
        % Lines from abscissa
        \draw[dashed] ({sqrt(2)/2},-.015) -- ({sqrt(2)/2},{sqrt(2)});
        \draw ({sqrt(2)/2},0) node[below]{$\frac{\sqrt{2}}{2}$};
        \draw[dashed] ({sqrt(2/3)},-.015) -- ({sqrt(2/3)},{2*sqrt(2/3)});
        \draw ({sqrt(2/3)},0) node[below]{$\frac{\sqrt{2}}{\sqrt{3}}$};
        \draw[dashed] (1,-.015) -- (1,.015);
        \draw (1,0) node[below]{$1$};
        \draw[dashed] ({sqrt(2)},-.015) -- ({sqrt(2)},.015);
        \draw ({sqrt(2)},0) node[below]{$\sqrt{2}$};
        % Lines from ordinate
        \draw[dashed] (-.01,{sqrt(2)}) -- ({sqrt(2)},{sqrt(2)});
        \draw (0,{sqrt(2)}) node[left]{$\sqrt{2}$};
        % Contributing trajectories
        \draw[domain=0:sqrt(2/3),NavyBlue,thick] plot (\x, 2*\x);
        \draw[domain=sqrt(2/3):sqrt(2),BrickRed,thick] plot (\x, 1/\x+\x/2);
        % Legend
        \draw[NavyBlue,thick] (1.7,.7) -- (1.8,.7) node[right]{$v(\beta) = 2\beta$};
        \draw[BrickRed,thick] (1.7,.5) -- (1.8,.5) node[right]{$v(\beta) = 1/\beta + \beta/2$};
    \end{tikzpicture}
    \caption{Graph of the function $v$ such that particles that contribute to $\Expec{\nu_{\beta, t}([a, 1])}$ are ``near'' $v(\beta)at$ at time $at$.}
    \label{fig:v(beta)}
\end{figure}
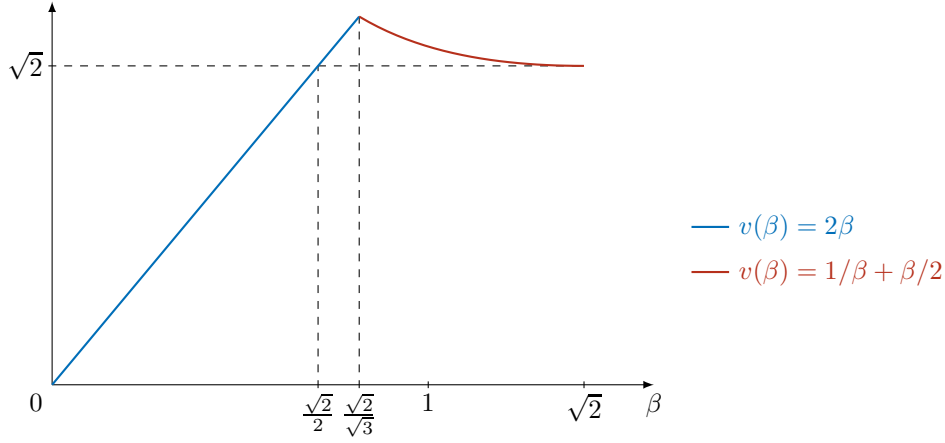

As a consequence of this discussion, one can describe the behavior of particles contributing mostly to the mean overlap distribution.
If $\beta \in (0, \sqrt{2\smash{/}3})$, then contributing particles have a drift $v(\beta) = 2\beta$ until time $at$ and then a drift $\beta$ (see Figure~\ref{fig:contributing_particles_3}).
If $\beta \in [\sqrt{2\smash{/}3},\sqrt{2})$, we explained that contributing particles have a drift $v(\beta) = 1/\beta + \beta/2$ until time $at$, but one can be more precise than that. 
If the contributing particle has, at some time $s \in [0,at]$, an ancestor at $v(\beta) s + x$, then one can check that the descendants at time $t$ of this ancestor have a contribution of order $\e^{\beta x}$ in $W_t(\beta)$, so any such event with a large $x$ results in a small value of $\nu_{\beta, t}([a, 1])$.
Hence, contributing particles are roughly constrained below the straight barrier $s \mapsto v(\beta)s$.
Therefore, if $\beta = \sqrt{2\smash{/}3}$, contributing particles move like a Brownian motion with drift $v(\beta)$ conditioned to stay below the barrier until time $at$, and then like a Brownian motion with drift $\beta$.
If $\beta \in (\sqrt{2\smash{/}3}, \sqrt{2})$, the factor $\e^{2\beta X_w(at)}$ in the numerator of~\eqref{eq:overlap_rewriting_intro} additionally forces particles to end up close to the barrier at time $at$, hence contributing particles move like a Brownian motion with drift $v(\beta)$ conditioned to stay below the barrier until time $at$ while reaching a position $v(\beta) at + O(1)$ at time $at$, and then like a Brownian motion with drift $\beta$ (see Figure~\ref{fig:contributing_particles_4}).
Note that these constraints explain the polynomial corrections $t^{-1/2}$ and $t^{-3/2}$ appearing in Theorem~\ref{thm:asymptotics_in_mean}.\ref{it:asymptotics_in_mean_3} and \ref{it:asymptotics_in_mean_4}.

\begin{figure}
    \centering
    % Variables
    \tikzmath{\t=2;\a=.65;\bc=sqrt(2);\b1=.4;\b2=.92;\b3=1;}    
    \begin{subfigure}[b]{.45\textwidth}
        \begin{tikzpicture}[xscale=2.7,yscale=2]
            % BBM
            \fill[fill=lightgray!30] (0,0) -- (\t,{\bc*\t}) -- (\t,0) -- (0,0);
            \draw[Gray] (0,0) -- (\t,{\bc*\t});
            % Axis
            \draw[->,>=latex] (0,0) -- (2.4,0);
            \draw[->,>=latex] (0,-.1) -- (0,3.5);
            \draw (0,0) node[below left]{$0$};
            % Lines from abscissa
            \draw[dashed] ({\a*\t},-.025) -- ({\a*\t},{2*\b3*\a*\t});
            \draw ({\a*\t},0) node[below]{$at$};
            \draw[dashed] (\t,-.025) -- (\t,0);
            \draw (\t,0) node[below]{$t$};
            % Lines from ordinate
            \draw[dashed] (-.025,{\bc*\t}) -- (\t,{\bc*\t});
            \draw (0,{\bc*\t}) node[left]{$\sqrt{2}t$};
            % Drift
            \draw[NavyBlue] (0,0) -- ({\a*\t},{2*\b3*\a*\t}) -- (\t,{\b3*\t*(1+\a)});
            \draw[<-,>=latex,NavyBlue] ({\a+.1},{2*\b3*(\a+.1)+.1}) -- (\a,{2*\b3*(\a+.1)+.7});
            \draw[NavyBlue] (\a,{2*\b3*(\a+.1)+.7}) node[above]{$2\beta$};
            \draw[<-,>=latex,NavyBlue] ({\t*(1+\a)/2},{(2*\b3*\a*\t+\b3*(\t-\a*\t)/2)-.2}) -- ({\t*(1+\a)/2},{(2*\b3*\a*\t+\b3*(\t-\a*\t)/2)-.9});
            \draw[NavyBlue] ({\t*(1+\a)/2},{(2*\b3*\a*\t+\b3*(\t-\a*\t)/2)-.9}) node[below]{$\beta$};
            % Common trajectory
            \draw[smooth] plot coordinates
            {(0,0) (.1,.03) (.2,.47) (.3,.46) (.4,1.07) (.5,.91) (.6,1.31) (.7,1.23) (.8,1.61) (.9,1.68) (1,2.2) (1.1,2.11) (1.2,2.48)
            % Trajectory 1
            (1.3,2.54) (1.4,2.79) (1.5,2.73) (1.6,2.88) (1.7,2.77) (1.8,3.08) (1.9,2.99) (2,3.18)};
            % Trajectory 2
            \draw[smooth] plot coordinates
            {(1.3,2.54) (1.4,2.62) (1.5,2.86) (1.6,3.01) (1.7,2.91) (1.8,3.26) (1.9,3.19) (2,3.41)};
            % Particles
            \draw (2,3.41) node{$\bullet$};
            \draw (2,3.41) node[right]{$u$};
            \draw (2,3.18) node{$\bullet$};
            \draw (2,3.18) node[right]{$v$};
            \draw (1.3,2.54) node{$\bullet$};
            \draw (1.3,2.54) node[below right]{$w$};
        \end{tikzpicture}
        \caption{Case $0 \leq \beta < \sqrt{2\smash{/}3}$. Particles that contribute to $\Expec{\nu_{\beta, t}([a, 1])}$ have drift~$2\beta$ until time $at$ and then have drift~$\beta$.\label{fig:contributing_particles_3}}
    \end{subfigure}
    \hfill
    \begin{subfigure}[b]{.45\textwidth}
        \begin{tikzpicture}[xscale=2.7,yscale=2]
            % BBM
            \fill[fill=lightgray!30] (0,0) -- (\t,{\bc*\t}) -- (\t,0) -- (0,0);
            \draw[Gray] (0,0) -- (\t,{\bc*\t});
            % Axis
            \draw[->,>=latex] (0,0) -- (2.4,0);
            \draw[->,>=latex] (0,-.1) -- (0,3.5);
            \draw (0,0) node[below left]{$0$};
            % Lines from abscissa
            \draw[dashed] ({\a*\t},-.025) -- ({\a*\t},{2*\b3*\a*\t});
            \draw ({\a*\t},0) node[below]{$at$};
            \draw[dashed] (\t,-.025) -- (\t,0);
            \draw (\t,0) node[below]{$t$};
            % Lines from ordinate
            \draw[dashed] (-.025,{\bc*\t}) -- (\t,{\bc*\t});
            \draw (0,{\bc*\t}) node[left]{$\sqrt{2}t$};
            % Drift
            \draw[NavyBlue] (0,0) -- ({\a*\t},{2*\b3*\a*\t}) -- (\t,{\b3*\t*(1+\a)});
            \draw[<-,>=latex,NavyBlue] ({\a+.1},{2*\b3*(\a+.1)+.1}) -- (\a,{2*\b3*(\a+.1)+.7});
            \draw[NavyBlue] (\a,{2*\b3*(\a+.1)+.7}) node[above]{$1/\beta + \beta/2$};
            \draw[<-,>=latex,NavyBlue] ({\t*(1+\a)/2},{(2*\b3*\a*\t+\b3*(\t-\a*\t)/2)-.2}) -- ({\t*(1+\a)/2},{(2*\b3*\a*\t+\b3*(\t-\a*\t)/2)-.9});
            \draw[NavyBlue] ({\t*(1+\a)/2},{(2*\b3*\a*\t+\b3*(\t-\a*\t)/2)-.9}) node[below]{$\beta$};
            % Common trajectory
            \draw[smooth] plot coordinates
            {(0,0) (.1,.03) (.2,.37) (.3,.46) (.4,.69) (.5,.74) (.6,1.04) (.7,1.11) (.8,1.44) (.9,1.46) (1,1.92) (1.1,1.97) (1.2,2.44)
            % Trajectory 1
            (1.3,2.54) (1.4,2.79) (1.5,2.73) (1.6,2.88) (1.7,2.77) (1.8,3.08) (1.9,2.99) (2,3.18)};
            % Trajectory 2
            \draw[smooth] plot coordinates
            {(1.3,2.54) (1.4,2.62) (1.5,2.86) (1.6,3.01) (1.7,2.91) (1.8,3.26) (1.9,3.19) (2,3.41)};
            % Particles
            \draw (2,3.41) node{$\bullet$};
            \draw (2,3.41) node[right]{$u$};
            \draw (2,3.18) node{$\bullet$};
            \draw (2,3.18) node[right]{$v$};
            \draw (1.3,2.54) node{$\bullet$};
            \draw (1.3,2.54) node[below right]{$w$};
        \end{tikzpicture}
        \caption{Case $\sqrt{2\smash{/}3} < \beta < \sqrt{2}$. Particles that contribute to $\Expec{\nu_{\beta, t}([a, 1])}$ are near $(1/\beta + \beta/2)at$ at time $at$ and then have drift~$\beta$.\label{fig:contributing_particles_4}}
    \end{subfigure}
    \caption{Behaviors of the particles that contribute to the mean overlap.}
\end{figure}

\subsection{Organization of the paper}

Section \ref{sec:preliminary} contains a formal definition of the model, which is then used to introduce properly some change of measures, and results concerning the additive martingales, the extremal process of the BBM and ballot theorems for Brownian motions.
In Section \ref{sec:typical}, we prove Theorem~\ref{thm:asymptotics_of_the_overlap_distribution}, with one subsection dedicated to each part of the statement (in the same order).
Finally, Section \ref{sec:mean} contains the proof of Theorem~\ref{thm:asymptotics_in_mean}, again with one subsection dedicated to each part of the statement (but this time in the order \ref{it:asymptotics_in_mean_2}, \ref{it:asymptotics_in_mean_4}, \ref{it:asymptotics_in_mean_3} and \ref{it:asymptotics_in_mean_1}).

\section{Preliminary results}
\label{sec:preliminary}

\subsection{The model}

Following the formalism of Chauvin and Rouault \cite{ChauvinRouault1988}, we consider the BBM to be a binary tree endowed with random lifetimes and trajectory data.
In addition, it will be useful to distinguish one line of descent, called the \emph{spine}.
Let us specify.

We consider the set of Ulam--Harris--Neveu labels $\cT = \bigcup_{i \geq 0} \{1, 2\}^i$, with the convention $\{1, 2\}^0 = \{\varnothing\}$.
For a label $u \in \cT$, we denote by $|u|$ its length, with the convention $|\varnothing| = 0$.
For two labels $u, v \in \cT$, we denote by $uv$ their concatenation, with the convention $u \varnothing = \varnothing u = u$.
We say that $u$ is an \emph{ancestor} of $v$ and that $v$ is a \emph{descendant} of $u$ if there exists $w \in \cT$ such that $v = uw$.
In this case, we write $u \leq v$.
If in addition $u \neq v$, then we write $u < v$.
We denote by $u \wedge v$ the most recent common ancestor of two particles $u$ and $v$, \ie $u \wedge v = u_1 \ldots u_{i_0}$ with $i_0 = \sup\{i \geq 1 : u_1 \ldots u_i = v_1 \ldots v_i\}$ if $u_1 = v_1$, and $u \wedge v = \varnothing$ otherwise.

The probability space on which we work is $\Omega$ the set of tuples $((\sigma_u)_{u \in \cT}, (Y_u)_{u \in \cT}, \xi)$, where, for each $u \in \cT$, $\sigma_u \geq 0$ and $Y_u : [0, \infty) \to \R$ is a continuous function, and $\xi = \{ \varnothing, \xi_1, \xi_1\xi_2, \xi_1\xi_2\xi_3, \ldots \}$, with $(\xi_k)_{k\geq 1}\in \{1,2\}^{\N^*}$, is an infinite line of descent of $\cT$.
We denote by $b_u = \sum_{v < u} \sigma_v$ the birthtime of a particle $u$, and $d_u = \sum_{v \leq u} \sigma_v$ its deathtime.
For $t \geq 0$, we define
\begin{equation*}
    \cN(t) = \{u \in \cT : b_u \leq t < d_u\} \quad \text{and} \quad \cN([0, t]) = \{u \in \cT : b_u \leq t\}.
\end{equation*}
We denote by $\xi(t)$ the only particle in $\cN(t) \cap \xi$, called \emph{spine}.
We define inductively the position at time $t$ of a particle $u \in \cN(t)$
\begin{equation*}
    X_u(t) = \sum_{v < u} X_v(d_v) + Y_u(t - b_u).
\end{equation*}
We extend the notion of position for a particle $u \in \cN(t)$ to the whole interval $[0, t]$: for every $s \in [0, t]$, we set $X_u(s) = X_v(s)$, where $v$ is the ancestor of $u$ alive at time $s$.
To avoid redundancy, we denote the spinal position $X_\xi(t) = X_{\xi(t)}(t)$.

For $t \geq 0$, let $\cF_t$ be the $\sigma$-algebra containing all the information until time $t$ except the spine,
\begin{equation*}
    \cF_t = \sigma \left( \left(u, d_u \wedge t, X_u|_{[0, d_u \wedge t]} \right) : u \in \cN([0, t]) \right) \quad \text{and} \quad \cF_\infty = \sigma \left( \bigcup_{t \geq 0} \cF_t \right),
\end{equation*}
where $s \wedge t$ denotes the minimum between $s$ and $t$.
Let us define a finer filtration by adding the information about the spine,
\begin{equation*}
    \tilde{\cF}_t = \sigma \left( \cF_t, \xi(t) \right) \quad \text{and} \quad \tilde{\cF}_\infty = \sigma \left( \bigcup_{t \geq 0} \tilde{\cF}_t \right).
\end{equation*}
We define the BBM law as the unique probability measure $\P$ on $(\Omega, \tilde{\cF}_\infty)$ for which
\begin{itemize}
    \item the trajectories $Y_u$ are \iid standard Brownian motions,
    \item the lifetimes $\sigma_u$ are \iid exponential variables with rate $1$,
    \item the spinal coordinates $\xi_1, \xi_2, \ldots$ are \iid with $\P(\xi_1 = 1) = \P(\xi_1 = 2) = 1/2$,
    \item $(Y_u)_{u \in \cT}$, $(\sigma_u)_{u \in \cT}$, $\xi$ are independent.
\end{itemize}
We denote by $\E$ the associated expectation.

\begin{remark}\label{rem:probability_to_be_the_spine}
    For all $t \geq 0$ and $u \in \cN(t)$, $\P \left( \xi(t) = u \middle| \cF_t \right) = 2^{-|u|}$.
\end{remark}

In order to state a branching property, we define, for $t \geq 0$, a shift operator $\Theta_{\xi, t} : \Omega \to \Omega$ by $\Theta_{\xi, t}(\sigma, Y, \xi) = (\sigma', Y', \xi')$, where, if $u = \xi(t)$,
\begin{enumerate}
    \item\label{pt:shifted_lifetime} $\sigma'_v = \begin{cases}
        d_u - t & \text{if } v = \varnothing, \\
        \sigma_{uv} & \text{otherwise},
    \end{cases}$
    \item\label{pt:shifted_trajectory} $Y'_v(s) = \begin{cases}
        X_u(t + s) - X_u(t) & \text{if } v = \varnothing, \\
        Y_{uv}(s) & \text{otherwise},
    \end{cases}$
    \item $\xi' = \{\varnothing, \xi_{|u| + 1}, \xi_{|u| + 1}\xi_{|u| + 2}, \ldots \}$.
\end{enumerate}
We also define, for $u \in \cT$ and $t \geq 0$, $\Omega_{u, t} = \{(\sigma, Y, \xi) \in \Omega : u \in \cN(t)\}$ and $\Theta_{u, t} : \Omega_{u, t} \to \Omega$ by $\Theta_{u, t}(\sigma, Y, \xi) = (\sigma', Y', \xi')$ satisfying \ref{pt:shifted_lifetime}, \ref{pt:shifted_trajectory} and $\xi' = \{\varnothing, 1, 11, \ldots \}$.
A consequence of \cite[Proposition~2.1]{Chauvin1991} is the following branching property.
If $Y$ and $Y_u$, $u \in \cT$, are non-negative variables on $\Omega$ such that $Y$ is $\tilde{\cF}_\infty$-measurable and $Y_u$, $u \in \cT$, are $\cF_\infty$-measurable, then for every $t \geq 0$,
\begin{equation}
    \E \left[ Y \circ \Theta_{\xi, t} \times \prod_{u \in \cN(t), u \neq \xi(t)} Y_u \circ \Theta_{u, t} \middle| \tilde{\cF}_t \right] = \E[Y] \prod_{u \in \cN(t), u \neq \xi(t)} \E[Y_u]. \label{eq:branching_property_with_spine}
\end{equation}
This means that, at time $t$, all the alive particles start independent BBMs shifted in time, space, label, and that the information about the spine is kept in the BBM started from $\xi(t)$.

\subsection{Change of measure}
\label{subsec:change_of_measure}

In this section, we introduce some changes of measure that allow a useful interpretation of the process in term of spinal decomposition.
This approach dates back to Kahane and Peyrière \cite{KahanePeyriere1976} for the BRW and to Chauvin and Rouault \cite{ChauvinRouault1988} for the BBM, and it has been widely used since, see \eg \cite{Lyons1997} and \cite{Kyprianou2004} for applications to the study of the additive martingales. 
The framework of these techniques has been extended in \cite{HardyHarris2006} and \cite{HarrisRoberts2017} to more general branching processes and the presentation below follows roughly the lines of \cite{HardyHarris2006}.

Let $\beta \geq 0$ and $\psi(\beta) = 1 + \beta^2/2$.
The process $(2^{|\xi(t)|} \e^{\beta X_\xi(t) - \psi(\beta)t})_{t \geq 0}$ is a positive $(\tilde{\cF}_t)_{t \geq 0}$-martingale with mean $1$.
By Kolmogorov's extension theorem, we can define a probability measure $\Q_\beta$ on $\tilde{\cF}_\infty$ by
\begin{equation}\label{eq:definition_of_Q_b}
    \frac{\d{\Q_\beta|_{\tilde{\cF}_t}}}{\d{\P|_{\tilde{\cF}_t}}} = 2^{|\xi(t)|} \e^{\beta X_\xi(t) - \psi(\beta)t}.
\end{equation}

\begin{remark}
    The additive martingale defined in~\eqref{eq:definition_additive_martingale} is the projection of the above martingale onto $\cF_t$,
    \begin{equation*}
        W_t(\beta) = \E \left[ 2^{|\xi(t)|} \e^{\beta X_\xi(t) - \psi(\beta)t} \middle| \cF_t \right].
    \end{equation*}
    Therefore, the change of measure~\eqref{eq:definition_of_Q_b} is consistent with~\eqref{eq:definition_of_Q_b_without_spine}.
\end{remark}

In \cite[Theorem~5]{ChauvinRouault1988}, Chauvin and Rouault describe the effect of this change of measure on the BBM.
Under $\Q_\beta$,
\begin{itemize}
    \item the spine's motion is a Brownian motion with drift $\beta$,
    \item the spine particle splits at rate $2$,
    \item at each splitting event of the spine particle, the label of the new spine particle is chosen uniformly among the two children,
    \item at this splitting event, the other child starts an independent standard BBM without spine.
\end{itemize}
By \cite[Theorem~8.1]{HardyHarris2006}, a remarkable and useful property is that
\begin{equation}\label{eq:gibbs_measure_weights}
    \Q_{\beta} \left( \xi(t) = u \middle| \cF_t \right) = \mathds{1}_{u \in \cN(t)} \frac{\e^{\beta X_u(t) - \psi(\beta)t}}{W_t(\beta)}.
\end{equation}
As mentioned in Remark~\ref{rem:rewriting}, if $\beta < \sqrt{2}$, then the uniform integrability of $(W_t(\beta))_{t\geq 0}$ implies that the measure $\Q_\beta|_{\cF_\infty}$ is uniformly continuous w.r.t.\@ $\P|_{\cF_\infty}$ with Radon--Nikodym derivative $W_\infty(\beta)$ (note however that $\Q_\beta$ and $\P$ on the full $\sigma$-field $\tilde{\cF}_\infty$ are singular).
On the other hand, if $\beta \geq \sqrt{2}$, then the convergence of $W_t(\beta)$ towards zero implies that the measures $\Q_\beta|_{\cF_\infty}$ and $\P_{\cF_\infty}$ are singular.

In this paper, we will also use a slightly different change of measure, that turns out to be more suited to the study of the overlap distribution.
For $\beta \geq 0$ and $t \geq 0$, we define $\Q_{\beta, t}$ on $\tilde{\cF}_\infty$ by
\begin{equation}\label{eq:definition_of_Q_bt}
    \frac{\d{\Q_{\beta, t}}}{\d{\P}} = 2^{|\xi(t)|} \e^{\beta X_\xi(t) - \psi(\beta) t}.
\end{equation}

\begin{remark}
    Let $Y_t$, $Y_\xi$ and $Y_u$, $u \in \cT$, be non-negative variables on $\Omega$ such that $Y_t$ is $\tilde{\cF}_t$-measurable, $Y_\xi$ is $\tilde{\cF}_\infty$-measurable, and $Y_u$, $u \in \cT$, are $\cF_\infty$-measurable.
    Then,
    \begin{align}
        &\E_{\Q_{\beta, t}} \left[ Y_t \times Y_\xi \circ \Theta_{\xi, t} \times \prod_{u \in \cN(t), u \neq \xi(t)} Y_u \circ \Theta_{u, t} \right] \nonumber \\
        &\quad = \E \left[ 2^{|\xi(t)|} \e^{\beta X_\xi(t) - \psi(\beta)t} \times Y_t \times \E \left[ Y_\xi \circ \Theta_{\xi, t} \times \prod_{u \in \cN(t), u \neq \xi(t)} Y_u \circ \Theta_{u, t} \middle| \tilde{\cF}_t \right] \right] \nonumber \\
        &\quad = \E_{\Q_\beta} \left[ Y_t \times \E \left[ Y_{\xi} \right] \times \prod_{u \in \cN(t), u \neq \xi(t)} \E \left[ Y_u \right] \right], \nonumber
    \end{align}
    by the branching property~\eqref{eq:branching_property_with_spine} and the definition of $\Q_\beta$.
    This means that, under $\Q_{\beta, t}$,
    \begin{itemize}
        \item between $0$ and $t$, the BBM has the same behavior as under $\Q_\beta$,
        \item at time $t$, the spine particle starts a standard BBM, shifted in time, space and label,
        \item at time $t$, the non-spine particles start standard BBMs without spine, shifted in time, space and label, independent of each other and of the spine-BBM.
    \end{itemize}
\end{remark}

Similarly to~\eqref{eq:gibbs_measure_weights}, we have the following property.

\begin{lemma}\label{lem:gibbs_measure_weights}
    For all $u \in \cT$ and $s, t \geq 0$,
    \begin{equation*}
        \Q_{\beta, t}\left(\xi(t) = u \middle| \cF_{t + s}\right) = \mathds{1}_{u \in \cN(t)} \frac{\e^{\beta X_u(t) - \psi(\beta)t}}{W_t(\beta)}.
    \end{equation*}
\end{lemma}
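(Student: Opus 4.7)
The plan is to verify the defining property of conditional expectation: I will show that for every bounded $\cF_{t+s}$-measurable function $f$,
\begin{equation*}
    \E_{\Q_{\beta,t}}\left[\mathds{1}_{\xi(t)=u}\, f\right]
    = \E_{\Q_{\beta,t}}\left[\mathds{1}_{u\in\cN(t)}\,\frac{\e^{\beta X_u(t)-\psi(\beta)t}}{W_t(\beta)}\, f\right],
\end{equation*}
and then conclude by the a.s.\@ uniqueness of conditional expectations and the fact that the right-hand side is $\cF_{t+s}$-measurable (it is in fact already $\cF_t$-measurable).

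For the left-hand side, I would unfold the Radon--Nikodym derivative~\eqref{eq:definition_of_Q_bt}, note that on the event $\{\xi(t)=u\}$ (which forces $u\in\cN(t)$) one has $|\xi(t)|=|u|$ and $X_\xi(t)=X_u(t)$, and then condition on $\cF_{t+s}$. Since $f$, $X_u(t)$ and $\mathds{1}_{u\in\cN(t)}$ are $\cF_{t+s}$-measurable, the only remaining random factor is $\mathds{1}_{\xi(t)=u}$. The extension of Remark~\ref{rem:probability_to_be_the_spine} that I need is
\begin{equation*}
    \P\bigl(\xi(t)=u \,\big|\, \cF_{t+s}\bigr) = 2^{-|u|}\,\mathds{1}_{u\in\cN(t)},
\end{equation*}
which follows from the product structure in the definition of $\P$: under $\P$, the spinal coordinates $(\xi_k)_{k\geq 1}$ are i.i.d.\@ uniform on $\{1,2\}$ and independent of $(\sigma_v,Y_v)_{v\in\cT}$, hence independent of $\cF_{t+s}$. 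Applied in this way, the left-hand side becomes $\E\bigl[\mathds{1}_{u\in\cN(t)}\,\e^{\beta X_u(t)-\psi(\beta)t}\, f\bigr]$.

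For the right-hand side, I would again unfold~\eqref{eq:definition_of_Q_bt} and condition on $\cF_{t+s}$, this time to integrate out the spine factor $2^{|\xi(t)|}\e^{\beta X_\xi(t)-\psi(\beta)t}$. Using the same identity for $\P(\xi(t)=v\mid\cF_{t+s})$ particle-by-particle,
\begin{equation*}
    \E\left[2^{|\xi(t)|}\e^{\beta X_\xi(t)-\psi(\beta)t} \,\middle|\, \cF_{t+s}\right]
    = \sum_{v\in\cN(t)} 2^{|v|}\e^{\beta X_v(t)-\psi(\beta)t}\cdot 2^{-|v|}
    = W_t(\beta),
\end{equation*}
so the $W_t(\beta)$ in the denominator cancels and the right-hand side also equals $\E\bigl[\mathds{1}_{u\in\cN(t)}\,\e^{\beta X_u(t)-\psi(\beta)t}\,f\bigr]$.

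The computation is essentially routine; there is no real obstacle, only the small subtlety of extending Remark~\ref{rem:probability_to_be_the_spine} from $\cF_t$ to $\cF_{t+s}$. Morally, this is the statement that once we forget about the spine (passing to the $\cF$-filtration), any additional information gathered after time $t$ about the descendants of particles in $\cN(t)$ is independent of the i.i.d.\@ coin flips $(\xi_k)$ that identify the spine inside the tree, so the uniform conditional law of $\xi(t)$ on $\cN(t)$ weighted by $2^{-|\cdot|}$ is unchanged.
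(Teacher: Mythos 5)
Your proof is correct and is at its core the same argument as the paper's: verify the defining property of conditional expectation by unfolding the Radon--Nikodym derivative~\eqref{eq:definition_of_Q_bt} and integrating out the spine given $\cF_{t+s}$. The one point of divergence is how that integration is done. The paper uses Remark~\ref{rem:probability_to_be_the_spine} only as literally stated: it rewrites $\mathds{1}_{\xi(t)=u}=\mathds{1}_{u\in\cN(t)}\sum_{v\in\cN(t+s),\,v\geq u}\mathds{1}_{\xi(t+s)=v}$, applies the remark at time $t+s$ to get each $\P(\xi(t+s)=v\mid\cF_{t+s})=2^{-|v|}$, and closes with the binary-tree normalization identity $\sum_{v\geq u,\,v\in\cN(t+s)}2^{-(|v|-|u|)}=1$. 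You instead assert and prove the extension $\P(\xi(t)=u\mid\cF_{t+s})=2^{-|u|}\mathds{1}_{u\in\cN(t)}$ directly from the independence of the spinal coin flips $(\xi_k)$ from $(\sigma_v,Y_v)_{v\in\cT}$, and hence from $\cF_{t+s}$; this is the same independence that underlies Remark~\ref{rem:probability_to_be_the_spine}, so your route is slightly more direct, while the paper's has the cosmetic advantage of not re-proving anything. Both your computation of the left-hand side and your re-derivation that $\E[2^{|\xi(t)|}\e^{\beta X_\xi(t)-\psi(\beta)t}\mid\cF_{t+s}]=W_t(\beta)$ for the right-hand side are sound (the paper abbreviates this last step by invoking that $\Q_{\beta,t}|_{\cF_\infty}$ has density $W_t(\beta)$), and you correctly note that the proposed conditional probability is already $\cF_t$-measurable, hence $\cF_{t+s}$-measurable. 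No gaps.
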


\begin{proof}
    Let $A \in \cF_{t+s}$.
    By definition~\eqref{eq:definition_of_Q_bt},
    \begin{equation}\label{eq:gibbs_measure_weights_1}
        \E_{\Q_{\beta, t}} \left[ \mathds{1}_A \mathds{1}_{\xi(t) = u} \right] = \E \left[ \mathds{1}_A \mathds{1}_{\xi(t) = u} 2^{|\xi(t)|} \e^{\beta X_\xi(t) - \psi(\beta) t} \right] = \E \left[ \mathds{1}_A \mathds{1}_{\xi(t) = u} 2^{|u|} \e^{\beta X_u(t) - \psi(\beta) t} \right].
    \end{equation}
    But $u$ is the spinal particle at time $t$ if and only if $u$ is alive at time $t$ and admits the spinal particle among its descendants at time $t + s$.
    This consideration together with Remark~\ref{rem:probability_to_be_the_spine} allow us to rewrite~\eqref{eq:gibbs_measure_weights_1} as
    \begin{align}
        &\E \left[ \mathds{1}_A \mathds{1}_{u \in \cN(t)} \left(\sum_{v \in \cN(t+s), v \geq u} \mathds{1}_{\xi(t+s) = v}\right) 2^{|u|} \e^{\beta X_u(t) - \psi(\beta) t} \right] \nonumber \\
        &\quad = \E \left[ \mathds{1}_A \mathds{1}_{u \in \cN(t)} \left( \sum_{v \in \cN(t+s), v \geq u} 2^{-|v|} \right) 2^{|u|} \e^{\beta X_u(t) - \psi(\beta) t} \right]. 
        \label{eq:gibbs_measure_weights_2}
    \end{align}
    Note that $\{v \in \cN(t+s) : v \geq u\}$ is a binary tree and that, for its particles $v$, the number $|v| - |u|$ counts the fission times from $u$ to $v$.
    Therefore,
    \begin{equation*}
        \sum_{v \in \cN(t+s), v \geq u} 2^{-(|v| - |u|)} = 1.
    \end{equation*}
    We can then rewrite~\eqref{eq:gibbs_measure_weights_2} as
    \begin{equation*}
        \E \left[ \mathds{1}_A \mathds{1}_{u \in \cN(t)} \e^{\beta X_u(t) - \psi(\beta) t} \right] = \E_{\Q_{\beta, t}} \left[ \mathds{1}_A \mathds{1}_{u \in \cN(t)} \frac{\e^{\beta X_u(t) - \psi(\beta) t}}{W_t(\beta)} \right],
    \end{equation*}
    using here that $\Q_{\beta,t}|_{\cF_\infty}$ has density $W_t(\beta)$ w.r.t.\@ $\P|_{\cF_\infty}$.
    This concludes the proof.
\end{proof}

\subsection{Moments and tails of the additive martingale}

In what follows, it will be convenient to use the following rewriting of the additive martingale~\eqref{eq:definition_additive_martingale}.
For $t \geq s \geq 0$,
\begin{equation}\label{eq:rewriting_additive_martingale}
    W_t(\beta) = \sum_{u \in \cN(s)} \e^{\beta X_u(s) - \psi(\beta)s} W_{t - s}^{(u, s)}(\beta),
\end{equation}
where, for $r \geq 0$,
\begin{equation*}
    W_r^{(u, s)}(\beta) = W_r \circ \Theta_{u, s} = \sum_{v \in \cN(s + r), v \geq u} \e^{\beta (X_v(s + r) - X_u(s)) - \psi(\beta)r}.
\end{equation*}
The above process is the additive martingale of a BBM shifted in time, space and label.
It converges \as to some limit $W_\infty^{(u, s)}(\beta)$ and hence we can extend~\eqref{eq:rewriting_additive_martingale} to the case $t = \infty$.
By the branching property~\eqref{eq:branching_property_with_spine}, conditionally on $\cF_s$, the processes $(W_r^{(u, s)})_{r \in[0, \infty]}$ for $u \in \cN(s)$ are \iid copies of $(W_r)_{r \in [0, \infty]}$.

\begin{lemma}\label{lem:additive_martingale_convergence_in_Lp}
    Let $\beta \in [0, \sqrt{2})$.
    For any $p \in (1, 2/\beta^2)$, the additive martingale at inverse temperature $\beta$ is bounded in $L^p$ and hence converges in $L^p$.
    Moreover, if $p \in (1, 2/\beta^2) \cap (1, 2]$, then there exists $C>0$ such that, for any $0 \leq s \leq t \leq \infty$,
    \begin{equation*}
        \E \left| W_t(\beta) - W_s(\beta) \right|^p \leq C \e^{-(p - 1)(1 - p\beta^2/2)s}.
    \end{equation*}
\end{lemma}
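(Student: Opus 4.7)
The plan rests on the decomposition obtained from \eqref{eq:rewriting_additive_martingale}: for $0 \le s \le t$,
\begin{equation*}
W_t(\beta) - W_s(\beta) = \sum_{u \in \cN(s)} \e^{\beta X_u(s) - \psi(\beta)s}\bigl(W_{t-s}^{(u,s)}(\beta) - 1\bigr),
\end{equation*}
where, conditional on $\cF_s$, the summands are independent and centered, and each factor $W_{t-s}^{(u,s)}(\beta)$ has the law of $W_{t-s}(\beta)$. I would first establish the quantitative bound for $p \in (1, 2/\beta^2) \cap (1,2]$, which simultaneously yields $L^p$-boundedness and $L^p$-convergence in that range; the extension of $L^p$-boundedness to $p \in (2, 2/\beta^2)$ is then handled by a Rosenthal-type inequality.

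For $p \in (1,2]$, applying the von Bahr--Esseen inequality conditionally on $\cF_s$ gives
\begin{equation*}
\E\bigl[|W_t(\beta) - W_s(\beta)|^p \bigm| \cF_s\bigr] \le 2 \sum_{u \in \cN(s)} \e^{p\beta X_u(s) - p\psi(\beta)s}\, \E|W_{t-s}(\beta) - 1|^p.
\end{equation*}
A direct application of the many-to-one lemma yields $\E\bigl[\sum_{u \in \cN(s)} \e^{p\beta X_u(s) - p\psi(\beta)s}\bigr] = \e^{(\psi(p\beta) - p\psi(\beta))s} = \e^{-(p-1)(1-p\beta^2/2)s}$, whence the key recursion
\begin{equation*}
\E|W_t(\beta) - W_s(\beta)|^p \le 2\, \e^{-(p-1)(1-p\beta^2/2)s}\, \E|W_{t-s}(\beta) - 1|^p.
\end{equation*}
Combining this with the triangle inequality on $W_t(\beta) - 1 = (W_s(\beta)-1) + (W_t(\beta)-W_s(\beta))$ gives
\begin{equation*}
\E|W_t(\beta) - 1|^p \le 2^{p-1} \E|W_s(\beta) - 1|^p + 2^p \e^{-(p-1)(1-p\beta^2/2)s}\, \E|W_{t-s}(\beta) - 1|^p.
\end{equation*}
Since the hypothesis $p < 2/\beta^2$ makes the exponential rate $(p-1)(1-p\beta^2/2)$ strictly positive, one may fix $s_0$ large enough that $2^p \e^{-(p-1)(1-p\beta^2/2)s_0} \le 1/2$. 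The value $\E|W_{s_0}(\beta) - 1|^p$ is finite because $|\cN(s_0)|$ has all polynomial moments (being Yule-distributed) and the particle positions are Gaussian, so $W_{s_0}(\beta) \in L^p$ for every $p \ge 1$. Induction on $\lfloor t/s_0 \rfloor$ then gives $M := \sup_{r \ge 0} \E|W_r(\beta) - 1|^p < \infty$, and re-injecting this into the recursion yields the claimed bound with $C = 2M$; the case $t = \infty$ follows from Fatou's lemma combined with the a.s.\ convergence $W_t(\beta) \to W_\infty(\beta)$.

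For $p \in (2, 2/\beta^2)$, the same decomposition is used with Rosenthal's inequality in place of von Bahr--Esseen, which produces an additional term of order $\bigl(\sum_u \e^{2\beta X_u(s) - 2\psi(\beta)s}\bigr)^{p/2}$. This extra term is controlled via H\"older together with $L^q$-boundedness of the auxiliary additive martingale $W_s(2\beta)$ already obtained at smaller exponents, and a bootstrap on $p$ covers the full range. Convergence in $L^p$ is then an immediate consequence of $L^p$-boundedness and a.s.\ convergence via Vitali's theorem. The main technical obstacle is precisely the $p > 2$ extension: the Rosenthal quadratic term does not close the recursion on its own, and one must iterate carefully or combine it with a direct moment estimate obtained by branching at the first splitting time (which is exponential of rate $1$).
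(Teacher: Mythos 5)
Your argument for $p\in(1,2]$ is correct and genuinely different from the paper's. The paper first proves $W_\infty(\beta)\in L^p$ for the full range $p\in(1,2/\beta^2)$ by invoking Liu's fixed-point criterion (\cite[Proposition~4.11]{Liu2002}) on the recursion $W_\infty(\beta)=\sum_{u\in\cN(1)}\e^{\beta X_u(1)-\psi(\beta)}W_\infty^{(u,1)}(\beta)$, then gets boundedness of $W_t(\beta)=\E[W_\infty(\beta)\mid\cF_t]$ by conditional Jensen, and only then uses von Bahr--Esseen to obtain the quantitative estimate. You instead avoid the citation to Liu entirely: you combine von Bahr--Esseen with the triangle inequality to close a self-consistent recursion $\E|W_t(\beta)-1|^p\le 2^{p-1}\E|W_{s_0}(\beta)-1|^p+\tfrac12\E|W_{t-s_0}(\beta)-1|^p$, where the contraction comes from $2^p\e^{-(p-1)(1-p\beta^2/2)s_0}\le\tfrac12$. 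This is more elementary and delivers $L^p$-boundedness and the quantitative bound in one stroke, at the small cost of needing the base-case input $\sup_{r\in[0,s_0]}\E|W_r(\beta)-1|^p<\infty$ (which is indeed easy for a fixed finite time, e.g.\ by bounding $W_r(\beta)\le|\cN(s_0)|\,\e^{\beta\max_u X_u(r)-\psi(\beta)r}$, Cauchy--Schwarz, and a many-to-one computation; your justification is slightly handwavy but the fact is true).

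Your treatment of $p\in(2,2/\beta^2)$, however, is only a sketch, and it has a real gap. The paper does use this range (for instance, $L^3$-boundedness is invoked in the proof of Lemma~\ref{lem:replacement_of_the_denominator} for $\beta\in(0,\sqrt{2/3})$). A single application of Rosenthal's inequality conditionally on $\cF_s$ produces, besides the von Bahr--Esseen-type term, a quadratic term of order $\bigl(W_s(2\beta)\,\e^{-(1-\beta^2)s}\bigr)^{p/2}\Var(W_{t-s}(\beta))^{p/2}$. Taking expectations then requires $W_s(2\beta)$ to be bounded in $L^{p/2}$, which by the already-established part of the lemma applied at $2\beta$ needs $p/2<2/(2\beta)^2=1/(2\beta^2)$, i.e.\ $p<1/\beta^2$. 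This falls strictly short of the target $p<2/\beta^2$, so the quadratic term does not close after one iteration; as you yourself note, a careful bootstrap is needed, but you do not spell it out, and it is not a one-line fix. The paper's reliance on Liu's criterion for the fixed-point equation is precisely what sidesteps this iteration: it handles all $p\in(1,2/\beta^2)$ simultaneously. So your approach is a valid and more self-contained alternative for $p\le2$, but as written it does not establish the lemma in full generality.
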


The proof of this lemma relies partially on the following inequality, which is due to Von Bahr and Esseen \cite[Theorem~2]{VonBahrEsseen1965} and is used repetitively throughout the paper. The application of this type of inequalities to branching processes dates back to Neveu \cite{Neveu1988} and Biggins \cite{Biggins1992}.

\begin{lemma}\label{lem:von_Bahr}
Let $p \in [1,2]$ and $X_1,\dots,X_n$ be centered independent real random variables in $L^p$. Then,
\[
    \Expec{\abs{\sum_{k=1}^n X_k}^p}
    \leq 2 \sum_{k=1}^n \Expec{\abs{X_k}^p}.
\]
\end{lemma}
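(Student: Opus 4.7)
The plan is to prove Lemma~\ref{lem:von_Bahr} by induction on $n$, the base case $n=1$ being trivial. The engine of the induction is the pointwise inequality
\begin{equation}\label{eq:plan_pointwise}
    \abs{s + x}^p \leq \abs{s}^p + p\,\abs{s}^{p-1}\operatorname{sgn}(s)\,x + 2\abs{x}^p,
    \qquad s, x \in \R,\ p \in [1,2].
\end{equation}
Applied with $s = S_{n-1} := X_1 + \cdots + X_{n-1}$ and $x = X_n$ and then integrated, the linear cross term $p\,\Expec{\abs{S_{n-1}}^{p-1}\operatorname{sgn}(S_{n-1})}\Expec{X_n}$ vanishes by the independence of $X_n$ from $(X_1,\dots,X_{n-1})$ together with the centering assumption $\Expec{X_n} = 0$. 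This leaves $\Expec{\abs{S_n}^p} \leq \Expec{\abs{S_{n-1}}^p} + 2\Expec{\abs{X_n}^p}$, and iterating yields the bound claimed in the lemma.

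The real content is therefore to establish \eqref{eq:plan_pointwise}. By the symmetry $(s,x) \mapsto (-s,-x)$ we may assume $s \geq 0$, and we distinguish three cases. If $x \geq 0$, set $g(x) := (s+x)^p - s^p - p s^{p-1} x - 2 x^p$, so $g(0) = 0$; since $p-1 \in [0,1]$, the map $t \mapsto t^{p-1}$ is concave and subadditive on $[0,\infty)$, giving $(s+x)^{p-1} \leq s^{p-1} + x^{p-1}$, hence $g'(x) \leq -p x^{p-1} \leq 0$ and $g(x) \leq 0$. If $x = -y$ with $0 \leq y \leq s$, the mean value theorem gives $(s-y)^p - s^p = -p y \xi^{p-1}$ for some $\xi \in [s-y, s]$, and the same subadditivity applied to $s = (s-y) + y$ yields $s^{p-1} - \xi^{p-1} \leq s^{p-1} - (s-y)^{p-1} \leq y^{p-1}$; therefore $(s-y)^p - s^p + p s^{p-1} y = p y (s^{p-1} - \xi^{p-1}) \leq p y^p \leq 2 y^p$. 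Finally, if $x = -y$ with $y > s \geq 0$, then $(y-s)^p \leq y^p$ trivially, while Young's inequality with conjugate exponents $p$ and $p/(p-1)$ gives $p s^{p-1} y \leq (p-1) s^p + y^p$, so the left-hand side of \eqref{eq:plan_pointwise} is at most $2 y^p - (2-p) s^p \leq 2 y^p$.

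The main obstacle is purely elementary: it is the inequality \eqref{eq:plan_pointwise}, and specifically the subcase $0 \leq -x \leq s$, where the first-order correction must be balanced tightly against the nonlinear error to obtain the constant $2$ on the right-hand side. Once \eqref{eq:plan_pointwise} is secured, the induction step reduces to a one-line use of independence and centering, and the factor $2$ in the lemma is precisely the factor $2$ of \eqref{eq:plan_pointwise} propagated through $n$ steps.
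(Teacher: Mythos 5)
Your proof is correct. The paper does not prove this lemma; it simply cites von Bahr and Esseen's Theorem~2, so any self-contained argument is a genuine addition rather than a comparison against the paper's route. That said, your argument is essentially the classical one: prove the one-step inequality $\Expec{\abs{S + Y}^p} \leq \Expec{\abs{S}^p} + 2\Expec{\abs{Y}^p}$ for $S$ independent of a centered $Y$, by integrating a deterministic pointwise estimate, and then iterate. I verified the pointwise inequality \eqref{eq:plan_pointwise} in all three cases: the subadditivity of $t \mapsto t^{p-1}$ (concave, vanishing at $0$) handles the cases $x \geq 0$ and $-s \leq x \leq 0$, and Young's inequality with exponents $p/(p-1)$ and $p$ handles $x < -s$; the case-2 bound in fact gives $p\,\abs{x}^p$, which is $\leq 2\abs{x}^p$ since $p \leq 2$. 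The cross term is integrable by H\"older (since $\abs{S_{n-1}}^{p-1} \in L^{p/(p-1)}$ and $X_n \in L^p$) and factors by independence, so it vanishes by centering, closing the induction. The only microscopic caveat, which does not affect correctness, is the $p=1$, $s=0$ corner where $\abs{s}^{p-1}$ involves $0^0$; since $\operatorname{sgn}(0)=0$ kills the term anyway, this is harmless. A one-line remark that the symmetry reduction uses $\operatorname{sgn}(-s) = -\operatorname{sgn}(s)$ and would make the argument airtight for a reader, but nothing is missing.
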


\begin{proof}[Proof of Lemma~\ref{lem:additive_martingale_convergence_in_Lp}]
    Let $p \in (1, 2/\beta^2)$.
    Equation~\eqref{eq:rewriting_additive_martingale} holds for $t = \infty$ and $s = 1$.
    Therefore, by \cite[Proposition~4.11]{Liu2002}, $W_\infty(\beta)$ is in $L^p$ if and only if
    \begin{equation*}
        1 > \E \left[ \# \cN(1) \right] \E \left[ \left( \e^{\beta X_\xi(1) - \psi(\beta)} \right)^p \right] = \e^{1 + p^2 \beta^2/2 - p\psi(\beta)}.
    \end{equation*}
    This inequality holds since $\psi(\beta) = 1 + \beta^2/2$ and $p \in (1, 2/\beta^2)$.
    Hence $W_\infty(\beta) \in L^p$.
    But, recalling that $(W_t(\beta))_{t\geq 0}$ is uniformly integrable for $\beta < \sqrt{2}$, we have $W_t(\beta) = \E \left[ W_\infty(\beta) \middle| \cF_t \right]$.
    Jensen's inequality for conditional expectations implies that the additive martingale is bounded in $L^p$.

    Now let $p \in (1, 2/\beta^2) \cap [1, 2]$ and $0 \leq s < t \leq \infty$.
    We rewrite
    \begin{equation*}
        W_t(\beta) - W_s(\beta) = \sum_{u \in \cN(s)} \e^{\beta X_u(s) - \psi(\beta)s} (W_{t - s}^{(u, s)} - 1).
    \end{equation*}
    Conditionally on $\cF_s$, the random variables $(W_{t - s}^{(u, s)} - 1)$ for $u \in \cN(s)$ are centered and independent.
    Therefore, by Lemma~\ref{lem:von_Bahr} and the branching property~\eqref{eq:branching_property_with_spine},
    \begin{align}
        &\E \left[ \left|W_t(\beta) - W_s(\beta)\right|^p \middle| \cF_s \right] \nonumber \\
        &\quad \leq \sum_{u \in \cN(s)} \e^{p \beta X_u(s) - p \psi(\beta)s} \E|W_{t-s}(\beta) - 1|^p = W_s(p\beta) \e^{-(p\psi(\beta) - \psi(p\beta))s} \E|W_{t-s}(\beta) - 1|^p. \nonumber
    \end{align}
    Note that
    \begin{equation}\label{eq:pc-cp}
        p\psi(\beta) - \psi(p\beta) = (p - 1)(1 - p\beta^2/2),
    \end{equation}
    which is positive, by choice of $p$.
    This concludes since $W_s(p\beta)$ has mean $1$ and the additive martingale at $\beta$ is bounded in $L^p$.
\end{proof}

\begin{lemma}\label{lem:left_tail_additive_martingale}
    Let $\beta \in [0, \sqrt{2})$ and fix a number $b$ such that
    \begin{equation}\label{eq:b_values}
        0 < b < \frac{\sqrt{\psi(\beta)^2 + 2\beta^2} - \psi(\beta)}{\beta^2}.
    \end{equation}
    There exists $K > 0$ such that, for every $t \in [0, \infty]$ and every $x \geq 0$,
    \begin{equation}\label{eq:left_tail_additive_martingale}
        \P(W_t(\beta) \leq x) \leq K x^b.
    \end{equation}
\end{lemma}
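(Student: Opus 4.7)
The plan is to reduce the tail bound to a uniform bound on a negative moment of the additive martingale. By Markov's inequality, $\Prob{W_t(\beta) \leq x} = \Prob{W_t(\beta)^{-b} \geq x^{-b}} \leq x^b \Expec{W_t(\beta)^{-b}}$, so it suffices to bound $\Expec{W_t(\beta)^{-b}}$ uniformly in $t \in [0, \infty]$. Since $x \mapsto x^{-b}$ is convex and, for $\beta < \sqrt{2}$, the additive martingale is uniformly integrable with $W_t(\beta) = \Expec{W_\infty(\beta) \mid \cF_t}$, conditional Jensen's inequality gives $\Expec{W_t(\beta)^{-b}} \leq \Expec{W_\infty(\beta)^{-b}}$. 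The problem thus reduces to showing $\Expec{W_\infty(\beta)^{-b}} < \infty$ whenever $b$ satisfies \eqref{eq:b_values}.

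The main tool is a recursion obtained by decomposing at the first fission time $\sigma$, which is exponential with rate $1$. By the branching property \eqref{eq:branching_property_with_spine},
\[
    W_\infty(\beta) \stackrel{d}{=} \e^{\beta Y(\sigma) - \psi(\beta)\sigma} \bigl( W^{(1)}_\infty + W^{(2)}_\infty \bigr),
\]
with $W^{(1)}_\infty, W^{(2)}_\infty$ independent copies of $W_\infty(\beta)$, independent of $(\sigma, Y(\sigma))$, and $Y(\sigma) \mid \sigma \sim \cN(0,\sigma)$. Applying the AM--GM inequality $(x+y)^{-b} \leq 2^{-b}(xy)^{-b/2}$ and integrating by independence yields
\[
    \Expec{W_\infty(\beta)^{-b}} \leq \frac{2^{-b}}{\abs{\gamma(b)}} \Expec{W_\infty(\beta)^{-b/2}}^2, \qquad \gamma(b) := b\psi(\beta) + \tfrac{1}{2} b^2 \beta^2 - 1,
\]
using the elementary identity $\int_0^\infty \e^{-s} \Expec{\e^{-b\beta Y(s) + b\psi(\beta) s}} \d{s} = 1/\abs{\gamma(b)}$ valid when $\gamma(b) < 0$. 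Solving the quadratic $\tfrac12 b^2\beta^2 + b\psi(\beta) - 1 < 0$ shows that $\gamma(b) < 0$ is exactly the condition \eqref{eq:b_values}. Iterating by dyadic doubling: given a base case $\Expec{W_\infty(\beta)^{-b_0}} < \infty$ for some $b_0 > 0$ and any target $b$ satisfying \eqref{eq:b_values}, taking $k$ large enough so that $b/2^k \leq b_0$ and applying the recursion $k$ times yields $\Expec{W_\infty(\beta)^{-b}} < \infty$.

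The main obstacle is therefore the base case. For this I would work directly with $f(x) := \Prob{W_\infty(\beta) \leq x}$, which by the same decomposition satisfies the functional inequality
\[
    f(x) \leq \Expec{f(x\e^{-A})^2}, \qquad A := \beta Y(\sigma) - \psi(\beta)\sigma,
\]
obtained by noting that $W^{(1)}_\infty + W^{(2)}_\infty \leq x \e^{-A}$ forces each summand to be at most $x\e^{-A}$. Since $W_\infty(\beta) > 0$ almost surely, one has $f(x) \to 0$ as $x \to 0$. An analogous Gaussian--exponential computation gives $\Expec{\e^{-2bA}} < \infty$ precisely when $b$ is less than half the right-hand side of \eqref{eq:b_values}. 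Combining the decay of $f$ at $0$, the finite moment $\Expec{\e^{-2bA}}$, and the super-exponential decay of $\Prob{A \leq u}$ as $u \to -\infty$ (inherited from the Gaussian tail of $Y(\sigma)$ conditional on $\sigma$ and the exponential tail of $\sigma$), a bootstrap argument closes the ansatz $f(x) \leq K x^{b}$ on small $x$ for a suitable small $b > 0$ and large $K$, and extends it to all $x > 0$ via the trivial bound $f \leq 1$. Integration then yields $\Expec{W_\infty(\beta)^{-b'}} < \infty$ for every $b' < b$, providing the required base case and completing the proof.
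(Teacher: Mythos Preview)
Your reduction via Markov's inequality and conditional Jensen (using $W_t(\beta)=\condExpec{W_\infty(\beta)}{\cF_t}$) is exactly what the paper does. The paper then invokes Liu \cite[Theorem~2.4]{Liu2001}, applied to the fixed-point equation \eqref{eq:rewriting_additive_martingale} at $s=1$, to obtain $\Prob{W_\infty(\beta)\le x}=O(x^b)$ under precisely the condition \eqref{eq:b_values}, and deduces $\Expec{W_\infty(\beta)^{-b}}<\infty$. So the only difference is that you attempt to re-prove Liu's left-tail result from scratch via the first-split decomposition.

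Your doubling step is correct and pleasant: the AM--GM bound $(x+y)^{-b}\le 2^{-b}(xy)^{-b/2}$ together with the first-split identity indeed gives $\Expec{W_\infty(\beta)^{-b}}\le 2^{-b}\lvert\gamma(b)\rvert^{-1}\Expec{W_\infty(\beta)^{-b/2}}^2$ whenever $\gamma(b)<0$, and since $\gamma$ is increasing on $[0,\infty)$ with $\gamma(0)=-1$, any target $b$ satisfying \eqref{eq:b_values} can be reached from any positive seed $b_0$ in finitely many doublings.

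The gap is the base case. First, a factual error: the left tail of $A=\beta Y(\sigma)-\psi(\beta)\sigma$ is \emph{not} super-exponential. Taking $\sigma\ge -u/\psi(\beta)$ and $Y(\sigma)\le 0$ already gives $\Prob{A\le u}\ge \tfrac12\e^{u/\psi(\beta)}$, so the decay is merely exponential. Second, and more importantly, the ``bootstrap argument'' you allude to is not spelled out, and it is exactly the substantive content of Liu's theorem: your doubling recursion can boost an existing exponent but cannot produce one from the bare input $f(x)\to 0$. A workable route is to iterate the functional inequality $f(x)\le\Expec{f(x\e^{-A})^2}$ using that $f(x_0)<1$ for some $x_0$ together with the Chernoff bound $\Prob{\e^{-A}>y}\le C_\lambda y^{-\lambda}$ (valid for every $\lambda$ with $\gamma(\lambda)<0$, the same computation you already did), but this requires a careful renewal-type argument that you have not supplied. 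As written, the base case is a genuine hole; either fill it or, as the paper does, cite Liu.
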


\begin{proof}
    Equation~\eqref{eq:rewriting_additive_martingale} holds for $t = \infty$ and $s = 1$.
    Therefore, by \cite[Theorem~2.4]{Liu2001}, $\P(W_\infty(\beta) \leq x) = O(x^b)$ as $x \to 0$ for any $b>0$ such that
    \begin{equation*}
        \E \left[ \left( \e^{\beta X_\xi(1) - \psi(\beta)} \right)^{-b} \right] < \infty \quad \text{and} \quad \E \left[ \left( \e^{\beta X_\xi(1) - \psi(\beta)} \right)^{-b} \mathds{1}_{\# \cN(1) = 1} \right] = \e^{b^2 \beta^2/2 + b \psi(\beta) - 1} < 1,
    \end{equation*}
    which is equivalent to~\eqref{eq:b_values}.
    As a consequence, $\Expec{W_\infty(\beta)^{-b}} < \infty$ for any $b$ satisfying~\eqref{eq:b_values}.
    Then, for any $t \in [0, \infty]$, by Markov's inequality and Jensen's inequality (using that $W_t(\beta) = \E \left[ W_\infty(\beta) \middle| \cF_t \right]$),    
    \begin{equation*}
        \P(W_t(\beta) \leq x) \leq x^b \Expec{W_t(\beta)^{-b}} \leq x^b \Expec{W_\infty(\beta)^{-b}},
    \end{equation*}
    which concludes the proof.
\end{proof}

\subsection{The extremal process}

We first recall some important results concerning the extremes of the BBM. For $t>0$, let
\begin{equation} \label{eq:def_m(t)}
    m(t) = \sqrt{2} t - \frac{3}{2 \sqrt{2}} \log t.
\end{equation}
The convergence of the maximal position has been proved by Bramson \cite{Bramson1983} and the limit has been identified by Lalley and Sellke \cite{LalleySellke1987}, who introduced the derivative martingale~\eqref{eq:definition_derivative_martingale} and showed that
\begin{equation}\label{eq:convergence_of_the_recentered_maximum}
    \max_{u \in \cN(t)} X_u(t) - m(t) \xrightarrow[t \to \infty]{} \frac{1}{\sqrt{2}} (G + \log(CZ_\infty)), \quad \text{in distribution},
\end{equation}
where $G$ is a standard Gumbel variable.
As a consequence, for any $\delta > 0$, the probability of the following event converges to $1$ as $t \to \infty$,
\begin{equation}\label{eq:bramson_estimate}
    \Lambda_{\delta, t} = \left\{ \max_{u \in \cN(t)} X_u(t) \leq m(t) + \delta \log t \right\}.
\end{equation}

Later, Arguin, Bovier and Kistler
\cite{ArguinBovierKistler2013} and A\"{i}dékon, Berestycki, Brunet and Shi \cite{AidekonShi2014} independently established the convergence in distribution
of the \emph{extremal process},
\begin{equation*}
    \cE_t = \sum_{u \in \cN(t)} \delta_{X_u(t) - m(t)}.
\end{equation*}
This convergence holds in the space of Radon measures on $\R$ endowed with the vague topology.
They obtained the following limit,
\begin{equation*}
    \cE_\infty = \sum_{i, j \geq 1} \delta_{p_i + \Delta_{ij} + \log(C Z_\infty)/\sqrt{2}},
\end{equation*}
where
\begin{itemize}
    \item $\cP = \sum_{i} \delta_{p_i}$ is a Poisson point process on $\R$ with intensity measure $\sqrt{2} \e^{-\sqrt{2}x} \d{x}$, independent of $Z_\infty$,
    \item $\cD_i = \sum_j \delta_{\Delta_{ij}}$ are \iid copies of a point process $\cD = \sum_j \delta_{\Delta_{j}}$ such that $\max \cD = 0$ and are independent of $\cP$ and $Z_\infty$.
\end{itemize}
In other words, the extremal process converges in distribution to a randomly shifted decorated Poisson point process, \ie a Poisson point process $\cP$ with a shift depending on $Z_\infty$, where each point is replaced by an independent copy of a decoration $\cD$.
One can deduce (see \cite[Proposition~3.16]{Chataignier2024} for details) that for any continuous function with compact support $f : \R \to [0, \infty)$,
\begin{equation}\label{eq:convergence_extremal_process_conditionally}
    \lim_{s \to \infty} \lim_{t \to \infty} \E \left[ \e^{-\cE_t(f)} \middle| \cF_s \right] = \E \left[ \e^{-\cE_\infty(f)} \middle| \cF_\infty \right], \quad \text{\as},
\end{equation}
where we emphasize that in $\cE_\infty$, only $Z_\infty$ is $\cF_\infty$-measurable, whereas $\cP$ and $(\cD_i)_{i\geq1}$ are independent of $\cF_\infty$.
Cortines, Hartung and Louidor \cite{CortinesHartungLouidor2019} studied the level sets of the decoration: in their Proposition~1.5, they obtained the following equivalent as $x \to \infty$,
\begin{equation}\label{eq:decoration_estimate}
    \E\cD([-x, 0]) \sim C \e^{\sqrt{2} x}.
\end{equation}
In what follows, we write $X \overset{d}{=} Y$ to indicate that $X$ and $Y$ have same distribution.

\begin{lemma}\label{lem:sum_gibbs_weights_ppp}
    If $\beta > \sqrt{2}$ and $(\xi_k)_{k\geq1}$ denotes the points of $\cE_\infty$, then $\Expec{\sum_j \e^{\beta \Delta_j}} < \infty$, $\sum_k \e^{\beta \xi_k} < \infty$ \as and
    \begin{equation}\label{eq:rewriting_sum_ppp}
        \sum_{k \geq 1} \e^{\beta \xi_k} \overset{d}{=} C \left(Z_\infty\right)^{\beta/\sqrt{2}} S_\beta,
    \end{equation}
    where $S_\beta = \sum_i \e^{\beta p_i}$ is a non-degenerate $\sqrt{2}/\beta$-stable random variable independent of $Z_\infty$.
\end{lemma}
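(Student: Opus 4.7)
The plan is to substitute the explicit form of $\cE_\infty$ recalled above and factor out the contribution of $Z_\infty$, reducing the claim to a classical Poisson point process computation. Writing $\cE_\infty = \sum_{i,j \geq 1} \delta_{p_i + \Delta_{ij} + \log(CZ_\infty)/\sqrt{2}}$, I would factor
\[
    \sum_{k \geq 1} \e^{\beta \xi_k} = (CZ_\infty)^{\beta/\sqrt{2}} \sum_{i \geq 1} \e^{\beta p_i} D_i,
\]
where $D_i := \sum_{j \geq 1} \e^{\beta \Delta_{ij}}$ are \iid copies of $D := \sum_{j \geq 1} \e^{\beta \Delta_j}$, independent of $\cP$ and of $Z_\infty$. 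The problem then splits into (a) showing $\E[D] < \infty$, and (b) identifying the law of the weighted sum $\sum_i \e^{\beta p_i} D_i$.

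For (a), since $\max \cD = 0$ all the atoms $\Delta_j$ are non-positive, so Fubini gives
\[
    \E[D] = \int_0^\infty \beta \e^{-\beta x} \E\cD([-x,0]) \d{x},
\]
and the estimate \eqref{eq:decoration_estimate} together with $\beta > \sqrt{2}$ guarantees convergence. Note that the hypothesis $\beta > \sqrt{2}$ is used exactly here, so this step is really where the sub-critical exponent of the decoration meets the super-critical inverse temperature; to me this is the only place where the proof uses more than PPP bookkeeping.

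For (b), I would use the standard representation of positive $\alpha$-stable laws as sums of PPP atoms. Setting $\alpha := \sqrt{2}/\beta \in (0,1)$, the change of variables $x \mapsto \e^{\beta x}$ sends $\cP$ to a PPP on $(0,\infty)$ with intensity $\alpha y^{-\alpha - 1} \d{y}$. Marking its atoms with the \iid variables $D_i$ and pushing forward by the multiplication map $(q,d) \mapsto qd$ gives, after a one-line substitution, a PPP on $(0,\infty)$ with intensity $\E[D^\alpha] \cdot \alpha y^{-\alpha-1} \d{y}$, where $\E[D^\alpha] \leq (\E[D])^\alpha < \infty$ by Jensen (using $\alpha < 1$ and step (a)). Since $\alpha \in (0,1)$, this intensity is integrable at infinity and $\int_0^1 y \cdot y^{-\alpha-1} \d{y} < \infty$, so $\sum_i \e^{\beta p_i} D_i$ is a.s.\ finite; in particular $\sum_k \e^{\beta \xi_k} < \infty$ a.s. By the scaling $y \mapsto (\E[D^\alpha])^{1/\alpha} y$ between standard and rescaled PPPs with this power-law intensity,
\[
    \sum_{i \geq 1} \e^{\beta p_i} D_i \overset{d}{=} (\E[D^\alpha])^{1/\alpha} \sum_{i \geq 1} \e^{\beta p_i} = (\E[D^\alpha])^{1/\alpha} S_\beta.
\]

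Finally, because $\cP$ and $(\cD_i)_{i\geq 1}$ are independent of $Z_\infty$, the distributional identity of step (b) transfers to a joint identity in which $S_\beta$ is independent of $Z_\infty$. Combining with the factorisation from the first paragraph and absorbing $C^{\beta/\sqrt{2}}(\E[D^\alpha])^{1/\alpha}$ into a single constant $C$ yields $\sum_k \e^{\beta \xi_k} \overset{d}{=} C Z_\infty^{\beta/\sqrt{2}} S_\beta$. Non-degeneracy of $S_\beta$ is immediate from its classical representation as the value at time $1$ of an $\alpha$-stable subordinator.
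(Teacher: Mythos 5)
Your proof is correct, and while it reaches the same intermediate reduction $\sum_k \e^{\beta\xi_k} \overset{d}{=} (CZ_\infty)^{\beta/\sqrt{2}}(\E[D^\alpha])^{1/\alpha} \sum_i\e^{\beta p_i}$ as the paper, it does so by a genuinely different mechanism. The paper works additively on $\R$: it sets $X_i = \beta^{-1}\log D_i$, checks $\E[\e^{\sqrt{2}X_1}] = \E[D^\alpha] < \infty$, and invokes \cite[Prop.~8.7.a]{BolthausenSznitman2002}, the translation-invariance-under-\iid-shifts property of a PPP with intensity $\sqrt{2}\e^{-\sqrt{2}x}\d{x}$, to replace the random shifts $X_i$ by a deterministic constant; it then proves stability of $\sum_i\e^{\beta p_i}$ by a direct Laplace-transform computation. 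You instead push $\cP$ forward under $x\mapsto\e^{\beta x}$ to a power-law PPP on $(0,\infty)$, mark its atoms by $D_i$, push forward by multiplication, and use the multiplicative scaling invariance of the intensity $\alpha y^{-\alpha-1}\d{y}$ --- exactly the conjugate, under the exponential map, of the Bolthausen--Sznitman shift invariance --- and you recognize the stable law from the classical representation of an $\alpha$-stable subordinator rather than recomputing its Laplace transform. Your route is more self-contained (no appeal to the Bolthausen--Sznitman proposition) and has the pleasant feature that a.s.\ finiteness of $\sum_k\e^{\beta\xi_k}$ falls out of the same PPP intensity computation, whereas the paper proves that separately; the paper's route has the advantage of staying in the additive picture that matches the form of $\cE_\infty$ as stated. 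Both give the same constant, since $\E[\e^{\sqrt{2}X_1}]^{\beta/\sqrt{2}} = \E[D^\alpha]^{1/\alpha}$.
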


\begin{proof}
    As done in \cite[Lemma~3.1]{Bonnefont2022}, we rewrite
    \begin{equation*}
        \Expec{\sum_{j \geq 1} \e^{\beta \Delta_j}} = \E \int_{-\infty}^0 \e^{\beta x} \cD(\d{x}) = \E \int_0^\infty \beta \e^{-\beta y} \cD([-y, 0]) \d{y}.
    \end{equation*}
    Applying the Fubini-Tonelli theorem and~\eqref{eq:decoration_estimate}, we see that the above quantity is finite.
    Now,
    \begin{equation}\label{eq:rewriting_sum_exp}
        \sum_{k \geq 1} \e^{\beta \xi_k} = (C Z_\infty)^{\beta/\sqrt{2}} \sum_{i \geq 1} \e^{\beta(p_i + X_i)},
    \end{equation}
    where $X_i = \frac{1}{\beta} \log \sum_j \e^{\beta \Delta_{ij}}$.
    The random variables $X_i$ for $i \geq 1$ are \iid, independent of $\cP$ and satisfy
    \begin{equation*}
        \Expec{\e^{\sqrt{2} X_1}} = \Expec{\Biggl(\sum_{j \geq 1} \e^{\beta \Delta_j}\Biggr)^{\sqrt{2}/\beta}} \leq \Expec{\sum_{j \geq 1} \e^{\beta \Delta_j}} < \infty.
    \end{equation*}
    By \cite[Proposition~8.7.a]{BolthausenSznitman2002},
    \begin{equation}\label{eq:rewriting_poisson_distribution}
        \sum_{i \geq 1} \delta_{p_i + X_i} \overset{d}{=} \sum_{i \geq 1} \delta_{p_i + \frac{1}{\sqrt{2}} \log \Expec{\e^{\sqrt{2} X_1}}}.
    \end{equation}
    By~\eqref{eq:rewriting_sum_exp} and~\eqref{eq:rewriting_poisson_distribution}, we have $\sum_k \e^{\beta \xi_k} < \infty$ \as if and only if $\sum_i \e^{\beta p_i} < \infty$ \as
    Computing the expectations, we see that, with probability $1$, we have $\cP([0, \infty)) < \infty$ and $\sum_i \e^{\beta p_i} \mathds{1}_{p_i < 0} < \infty$, so $\sum_i \e^{\beta p_i} < \infty$.
    Besides, since $Z_\infty$ is independent of $\cP$ and $(\cD_i)_{i \geq 1}$, \eqref{eq:rewriting_sum_exp} and~\eqref{eq:rewriting_poisson_distribution} yield
    \begin{equation*}
        \sum_{k \geq 1} \e^{\beta \xi_k} \overset{d}{=} \left(C Z_\infty \Expec{\e^{\sqrt{2} X_1}}\right)^{\beta/\sqrt{2}} \sum_{i \geq 1} \e^{\beta p_i}.
    \end{equation*}
    It remains to show that $\sum_i \e^{\beta p_i}$ is stable.
    Let $\lambda > 0$.
    Applying successively the exponential formula for Poisson point processes (see \eg \cite[Section~0.5]{Bertoin1996}) and the change of variable $y = \e^{\beta x}$, we obtain
    \begin{equation*}
        \Expec{\exp\left(-\lambda \sum_{i \geq 1} \e^{\beta p_i}\right)} = \exp\left(-\int_\R (1 - \e^{-\lambda \e^{\beta x}}) \sqrt{2} \e^{-\sqrt{2} x} \d{x}\right) = \exp\left(-\sqrt{2}/\beta \int_0^\infty (1 - \e^{-\lambda y}) y^{-1-\sqrt{2}/\beta} \d{y}\right).
    \end{equation*}
    We recognize the Laplace transform of a non-degenerate $\sqrt{2}/\beta$-stable law (see \eg \cite{Sato1999}).
\end{proof}

We now define two point processes on $\R^2$ by
\begin{equation}
    \cE_t^* = \sum_{u \in \cN(t)} \delta_{(X_u(t) - m(t), W_\infty^{(u, t)}(\beta))} \quad \text{and} \quad \cE_\infty^* = \sum_{k \geq 1} \delta_{\left(\xi_k, W_k\right)},
\end{equation}
where $W_k$ for $k \geq 1$ are \iid copies of $W_\infty(\beta)$, independent of $\cE_\infty = \sum_{k} \delta_{\xi_k}$ and of the BBM.
Let $f : \R^2 \to \R$ be a bounded continuous function.
Assume that there exists $a \in \R$ such that $f(x, y) = 0$ as soon as $x \leq a$.
Under these conditions and for the BRW, Iksanov, Kolesko and Meiners showed in \cite[Lemma~5.2]{IksanovKoleskoMeiners2020} that $\cE_t^*(f)$ converges in distribution to $\cE_\infty^*(f)$ as $t \to \infty$.
Their result easily extends to BBM (see \cite[Lemma~5.9]{Chataignier2024} for details).
Furthermore, thanks to~\eqref{eq:convergence_extremal_process_conditionally}, one can adapt their proof to obtain the following joint convergence (see \cite[Remark~5.10]{Chataignier2024} for details), for any $\beta \geq 0$,
\begin{equation}\label{eq:joint_convergence}
    (\cE_t^*(f), W_t(\beta)) \xrightarrow[t \to \infty]{} (\cE_\infty^*(f), W_\infty(\beta)), \quad \text{in distribution}.
\end{equation}

\subsection{Brownian estimates}

Let $(B_t)_{t \geq 0}$ be a standard Brownian motion on $\R$, $\alpha \in (0, 1/2)$, and define, $f_t(s) = s^\alpha \wedge (t - s)^\alpha$ for $t \geq s \geq 0$.
By \cite[Lemma~2.8]{KimLubetzkyZeitouni2023}, there exist $C, c > 0$ such that, for all $t \geq 0$, $x, y \in [1, \sqrt{t}]$,
\begin{equation}\label{eq:ballot_theorem}
    \frac{c x}{(t + 1)^{1/2}} \leq \P \left( \sup_{s \in [0, t]} (B_s + f_t(s)) \leq x \right) \leq \frac{C x}{(t + 1)^{1/2}},
\end{equation}
and
\begin{equation}\label{eq:ballot_theorem_plus}
    \frac{c x y}{(t + 1)^{3/2}} \leq \P \left( \sup_{s \in [0, t]} (B_s + f_t(s)) \leq x, B_t \in [x - y, x - y + 1] \right) \leq \frac{C x y}{(t + 1)^{3/2}}.
\end{equation}
Besides, the following bound will be useful.

\begin{lemma}\label{lem:brownian_estimate}
    Fix $\delta > 0$.
    There exists $C_\delta > 0$ such that, for all $x, y \geq 0$ and $t \geq 1$,
    \begin{equation}\label{eq:brownian_estimate}
        \P \left( \sup_{s \in [0, t]} B_s \leq x, B_t \in [x - y, x - y + \delta] \right) \leq \frac{C_\delta x y}{t^{3/2}} \left( \e^{-(x - y)^2/2t} + \e^{-(x - y + \delta)^2/2t} \right).
    \end{equation}
\end{lemma}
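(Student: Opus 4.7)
The plan is to apply the reflection principle for Brownian motion, which gives the exact joint density of $(\sup_{[0,t]} B, B_t)$: for $x \geq 0$,
\[
	\Prob{\sup_{s \in [0, t]} B_s \leq x, B_t \in \d{z}} = \frac{1}{\sqrt{2\pi t}}\left(\e^{-z^2/2t} - \e^{-(2x-z)^2/2t}\right) \d{z} \quad \text{for } z \leq x,
\]
and vanishes for $z > x$. Hence the probability to estimate equals
\[
	\int_{x-y}^{\min(x-y+\delta,\,x)} \frac{1}{\sqrt{2\pi t}}\left(\e^{-z^2/2t} - \e^{-(2x-z)^2/2t}\right) \d{z}.
\]

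The key algebraic step is to factor the integrand using $(2x-z)^2 - z^2 = 4x(x-z)$, so that
\[
	\e^{-z^2/2t} - \e^{-(2x-z)^2/2t} = \e^{-z^2/2t}\left(1 - \e^{-2x(x-z)/t}\right) \leq \e^{-z^2/2t} \cdot \frac{2x(x-z)}{t},
\]
where I used $1 - \e^{-a} \leq a$ for $a \geq 0$. Since $z \geq x - y$ on the domain of integration, $x - z \leq y$, so the integrand is bounded by $\frac{2xy}{t} \e^{-z^2/2t}$. Integrating over an interval of length at most $\delta$ gives a prefactor $\frac{2\delta x y}{\sqrt{2\pi}\, t^{3/2}}$, producing the desired dependence in $x$, $y$ and $t$.

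It then remains to bound $\e^{-z^2/2t}$ uniformly in $z \in [x-y,\, x-y+\delta]$ by a constant (depending only on $\delta$) times $\e^{-(x-y)^2/2t} + \e^{-(x-y+\delta)^2/2t}$. A short case analysis based on the sign of the endpoints suffices: if $x - y \geq 0$, then $z \mapsto z^2$ is increasing on the interval and the bound holds with constant $1$ at the left endpoint; if $x - y + \delta \leq 0$, it is decreasing and the bound holds with constant $1$ at the right endpoint. The only subtle case is when $x - y < 0 < x - y + \delta$, which forces $|x - y| \leq \delta$ and $|x - y + \delta| \leq \delta$, so both $\e^{-(x-y)^2/2t}$ and $\e^{-(x-y+\delta)^2/2t}$ are bounded below by $\e^{-\delta^2/2}$ as soon as $t \geq 1$, while $\e^{-z^2/2t} \leq 1$; this gives the bound with constant $\e^{\delta^2/2}$.

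There is no real obstacle: the argument is a direct combination of the reflection principle with the elementary estimate $1 - \e^{-a} \leq a$. The only point requiring a bit of care is the last case of the endpoint bound, which is precisely where the assumption $t \geq 1$ is used; everything else is a straightforward computation yielding the constant $C_\delta = \frac{2\delta\, \e^{\delta^2/2}}{\sqrt{2\pi}}$.
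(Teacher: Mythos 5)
Your proof is correct and takes essentially the same approach as the paper. The paper cites Bramson's Lemma~2, which states $\condProb{\sup_{s \in [0, t]} B_s \leq x}{B_t} \leq 2x(x - B_t)^+/t$, whereas you derive exactly this bound from the reflection principle density via the factorization $(2x-z)^2 - z^2 = 4x(x-z)$ and the inequality $1 - \e^{-a} \leq a$; the two are the same computation. The remaining steps — extracting the factor $2xy/t$, integrating a Gaussian density over an interval of length $\delta$, and a three-case analysis on the sign of the endpoints (using $t \geq 1$ in the middle case) — coincide with the paper's argument.
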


\begin{proof}
    By \cite[Lemma~2]{Bramson1978}, $\P \left( \sup_{s \in [0, t]} B_s \leq x \middle| B_t \right) \leq 2 x (x - B_t)^+/t$, where we write $a^+ = \max(a, 0)$.
    It follows that the left-hand side of~\eqref{eq:brownian_estimate} is bounded by
    \begin{equation*}
        \Expec{\frac{2 x (x - B_t)^+}{t} \mathds{1}_{\{B_t \in [x - y, x - y + \delta]\}}} \leq \frac{2 x y}{t} \P(B_t \in [x - y, x - y + \delta]).
    \end{equation*}
    Next, we can \eg use the bounds
    \begin{equation*}
        \P(B_t \in [x - y, x - y + \delta]) \leq \begin{cases}
            C \delta \e^{-(x - y)^2/2t}/\sqrt{t} & \text{if } x - y \geq 0, \\
            C \delta \e^{-(x - y + \delta)^2/2t}/\sqrt{t} & \text{if } x - y + \delta \leq 0, \\
            C/\sqrt{t} & \text{otherwise}.
        \end{cases}
    \end{equation*}
    Each of these terms is bounded by the right-hand side of~\eqref{eq:brownian_estimate} since $t \geq 1$.
\end{proof}

\section{Proof of Theorem~\ref{thm:asymptotics_of_the_overlap_distribution}}
\label{sec:typical}

Let $\beta \in [0, \sqrt{2})$ and $a \in (0, 1)$.
Recall that $\psi(\beta) = 1 + \beta^2/2$ and that the additive martingale $W_t(\beta)$ is defined in~\eqref{eq:definition_additive_martingale}.
In what follows, it will be convenient to use the following rewriting of the overlap distribution defined in~\eqref{eq:def_overlap_dist},
\begin{align}
    \nu_{\beta, t}([a, 1]) &= \frac{1}{W_t(\beta)^2} \sum_{u, v \in \cN(t), u \wedge v \geq at} \e^{\beta(X_u(t) + X_v(t)) - 2 \psi(\beta) t} \nonumber \\
    &= \frac{\e^{(\beta^2 - 1) at}}{W_t(\beta)^2} \sum_{u \in \cN(at)} \e^{2\beta X_u(at) - \psi(2\beta)at} W_{t - at}^{(u, at)}(\beta)^2, \label{eq:overlap_rewriting}
\end{align}
where we recall that
\begin{equation}\label{eq:definition_shifted_martingale}
    W_t^{(u, s)}(\beta) = W_t(\beta) \circ \Theta_{u, s} = \sum_{v \in \cN(s+t), v \geq u} \e^{\beta (X_v(s + t) - X_u(s)) - \psi(\beta)t}.
\end{equation}

\subsection{Upper temperature}\label{sct:first_regime}

\begin{proof}[Proof of Theorem~\ref{thm:asymptotics_of_the_overlap_distribution}.\ref{it:asymptotics_of_the_overlap_distribution_1}]
    Define
    \begin{equation*}
        F_t = \sum_{u \in \cN(at)} \e^{2\beta X_u(at) - \psi(2\beta)at} W_{t - at}^{(u, at)}(\beta)^2,
    \end{equation*}
    so that $\e^{(1 - \beta^2)at} \nu_{\beta, t}([a, 1]) = F_t W_t(\beta)^{-2}$, by~\eqref{eq:overlap_rewriting}.
    It suffices to show that
    \begin{equation*}
        F_t \xrightarrow[t \to \infty]{} F_\infty = W_\infty(2\beta) \Expec{W_\infty(\beta)^2}, \quad \text{\as}
    \end{equation*}
    
    \textbf{Replacement along a partition of time:}
    Let us fix the following partition of the time $[0, \infty)$,
    \begin{equation*}
        T = \bigcup_{n \geq 0} \left(\frac{1}{n+1} \Z\right) \cap [n, n+1].
    \end{equation*}
    First, we justify that, along this partition, we can replace $F_t$ with
    \begin{equation*}
        \bar{F}_t = \sum_{u \in \cN(at)} \e^{2\beta X_u(at) - \psi(2\beta)at} W_\infty^{(u, at)}(\beta)^2.
    \end{equation*}
    We use first branching property at time $at$, and then factorize the square and apply Cauchy--Schwarz inequality:
    \begin{align}
        \E \left[ |F_t - \bar{F}_t| \middle| \cF_{at} \right] 
        & \leq W_{at}(2\beta) \E|W_{t - at}(\beta)^2 - W_\infty(\beta)^2| \nonumber \\
        & \leq W_{at}(2\beta) 
        \left( \Expec{W_{t - at}(\beta)^2} + \Expec{W_\infty(\beta)^2} \right)^{1/2}
        \Expec{(W_{t - at}(\beta) - W_\infty(\beta))^2}^{1/2} \nonumber \\
        & \leq C W_{at}(2\beta) \e^{-(2\psi(\beta) - \psi(2\beta))s} \nonumber
    \end{align}
    using Lemma~\ref{lem:additive_martingale_convergence_in_Lp} in the last inequality.
    By~\eqref{eq:pc-cp}, it follows that $\E|F_t - \bar{F}_t| \leq \e^{-(1 - \beta^2)(t-at)}$.
    Hence, by the Borel--Cantelli lemma,
    \begin{equation} \label{eq:step_1}
        |F_t - \bar{F}_t| \xrightarrow[t \to \infty, t \in T]{} 0, \quad \text{\as}
    \end{equation}
    
    \textbf{Almost sure convergence along a partition of time:}
    Here, we show that $F_t$ converges \as to $F_\infty$ along the partition $T$.
    In view of the previous section, it suffices to show that the following process converges \as to $0$ along the partition $T$,
    \begin{equation*}
        \Delta_t = \bar{F}_t - W_{at}(2\beta) \Expec{W_\infty(\beta)^2} = \sum_{u \in \cN(at)} \e^{2\beta X_u(at) - \psi(2\beta)at} \left(W_\infty^{(u, at)}(\beta)^2 - \Expec{W_\infty(\beta)^2}\right).
    \end{equation*}
    Let us fix $p \in [1, 2]$ such that $p < 1/2\beta^2$.
    By Lemma~\ref{lem:von_Bahr},
    \begin{equation*}
        \E \left[ \left|\Delta_t\right|^p \middle| \cF_{at} \right] \leq W_{at}(p2\beta) \E\left|W_\infty(\beta)^2 - \Expec{W_\infty(\beta)^2}\right|^p \e^{-(p\psi(2\beta) - \psi(p2\beta))at}.
    \end{equation*}
    The variable $W_\infty(\beta)^2$ is in $L^p$ because $2p<2/\beta^2$, hence so is $W_\infty(\beta)^2 - \Expec{W_\infty(\beta)^2}$.
    Hence,
    \begin{equation*}
        \E\left|\Delta_t\right|^p \leq C \e^{-(p\psi(2\beta) - \psi(p2\beta))at}. 
    \end{equation*}
    By~\eqref{eq:pc-cp} and by the Borel--Cantelli lemma,
    \begin{equation} \label{eq:step_2}
        \Delta_t \xrightarrow[t \to \infty, t \in T]{} 0, \quad \text{\as}
    \end{equation}
    
    \textbf{Extension of the almost sure convergence to continuum:}
    Let us denote $(t_n)_{n \geq 0}$ the sequence of the elements of $T$ in increasing order.
    Here, we show that
    \begin{equation}\label{eq:extension_to_continuum}
        \sup_{t \in [t_n, t_{n+1}]} |F_t - F_{t_n}| \xrightarrow[n \to \infty]{} 0, \quad \text{\as}
    \end{equation}
    We have
    \begin{equation}\label{eq:extension_to_continuum_bound_Sn}
        \sup_{t \in [t_n, t_{n+1}]} |F_t - F_{t_n}| \leq \sum_{u \in \cN(at_n)} \e^{2\beta X_u(at_n) - \psi(2\beta)at_n} S_n^{(u)},
    \end{equation}
    where
    \begin{equation*}
        S_n^{(u)} = \sup_{t \in [t_n, t_{n+1}]} \left|\sum_{v \in \cN(at), v \geq u} \e^{2\beta(X_v(at) - X_u(at_n)) - \psi(2\beta)(at - at_n)} W_{t - at}^{(v, at)}(\beta)^2 - W_{t_n - at_n}^{(u, at_n)}(\beta)^2\right|.
    \end{equation*}
    Note that, by the branching property, for any $u \in \cN(at_n)$, we have $\E \left[ S_n^{(u)} \middle| \cF_{at_n} \right] = \Expec{S_n}$, where
    \begin{equation*}
        S_n = \sup_{s \in [0, t_{n+1} - t_n]} \left|\sum_{v \in \cN(as)} \e^{2\beta X_v(as) - \psi(2\beta)as} W_{s + t_n - a(s + t_n)}^{(v, as)}(\beta)^2 - W_{t_n - at_n}(\beta)^2\right|.
    \end{equation*}
    Therefore,
    \begin{equation*}
        \sum_{u \in \cN(at_n)} \e^{2\beta X_u(at_n) - \psi(2\beta)at_n} \E \left[ S_n^{(u)} \middle| \cF_{at_n} \right] = W_{at_n}(2\beta) \Expec{S_n}.
    \end{equation*}
    By Lemma~\ref{lem:Sn} below and since the additive martingale converges \as, the above process converges \as to $0$ as $n \to \infty$.
    To obtain~\eqref{eq:extension_to_continuum}, it is then sufficient to show that
    \begin{equation}\label{eq:Delta_n}
        V_n = \sum_{u \in \cN(at_n)} \e^{2\beta X_u(at_n) - \psi(2\beta)at_n} \left(S_n^{(u)} - \E \left[ S_n^{(u)} \middle| \cF_{at_n} \right] \right) \xrightarrow[n \to \infty]{} 0, \quad \text{\as}
    \end{equation}
    Let us fix $p \in (1, 2]$ such that $p < 1/2\beta^2$.
    By Lemma~\ref{lem:von_Bahr},
    \begin{equation*}
        \E \left[ V_n^p \middle| \cF_{at_n} \right] \leq W_{at_n}(p2\beta) \E|S_n - \Expec{S_n}|^p \e^{-(p\psi(2\beta) - \psi(p2\beta))at_n}.
    \end{equation*}
    By Lemma~\ref{lem:Sn} below, the process $S_n - \Expec{S_n}$ converges in $L^p$ to $0$.
    In particular,
    \begin{equation*}
        \Expec{V_n^p} \leq C \e^{-(p\psi(2\beta) - \psi(p2\beta))at_n}.
    \end{equation*}
    By~\eqref{eq:pc-cp} and by the Borel-Cantelli lemma, the \as convergence~\eqref{eq:Delta_n} holds, and therefore \eqref{eq:extension_to_continuum} as well.
    Together with \eqref{eq:step_1} and \eqref{eq:step_2}, this concludes the proof.
\end{proof}

\begin{lemma}\label{lem:Sn}
    For any $p \in [1, 1/\beta^2)$, the process $S_n$ converges in $L^p$ to $0$.
\end{lemma}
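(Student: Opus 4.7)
The plan is to split according to whether the root has branched during the short time window $[0, a\delta_n]$, where $\delta_n := t_{n+1} - t_n \to 0$ and $r_n := (1-a) t_n \to \infty$. Let $A_n$ be the event that the root has not split by time $a\delta_n$, so that $\P(A_n^c) = 1 - \e^{-a\delta_n} = O(\delta_n)$. On $A_n$ one has $\cN(as) = \{\varnothing\}$ and $X_\varnothing(as) = B_{as}$ for a Brownian motion $B$; since then all particles at time $as + r_n + (1-a)s$ are descendants of $\varnothing$, a direct computation of $W_{r_n + (1-a)s}^{(\varnothing, as)}(\beta) = \e^{-\beta B_{as} + \psi(\beta) as} W_{r_n + s}(\beta)$ yields the cancellation
\begin{equation*}
    \sum_{v \in \cN(as)} \e^{2\beta X_v(as) - \psi(2\beta)as} W_{r_n + (1-a)s}^{(v, as)}(\beta)^2 = \e^{(1-\beta^2)as} W_{r_n + s}(\beta)^2,
\end{equation*}
via the identity $2\psi(\beta) - \psi(2\beta) = 1 - \beta^2$.

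I would then control $\mathds{1}_{A_n} S_n$ by writing the difference with $W_{r_n}(\beta)^2$ as $(\e^{(1-\beta^2)as} - 1) W_{r_n+s}(\beta)^2 + (W_{r_n+s}(\beta)^2 - W_{r_n}(\beta)^2)$. The first summand is $O(\delta_n) \sup_{R \geq r_n} W_R(\beta)^2$, whose $L^p$-norm vanishes because $\sup_R W_R(\beta)$ lies in $L^{2p}$ by Doob's maximal inequality (valid since $2p < 2/\beta^2$, by Lemma~\ref{lem:additive_martingale_convergence_in_Lp}). For the second summand, I would factor $W_{r_n+s}^2 - W_{r_n}^2 = (W_{r_n+s} - W_{r_n})(W_{r_n+s} + W_{r_n})$, apply Hölder with conjugate exponents $(2p, 2p)$, and bound the supremum over $s \leq \delta_n$ of $|W_{r_n+s} - W_{r_n}|$ in $L^{2p}$ using Doob's inequality for the martingale $(W_R - W_{r_n})_{R \geq r_n}$; this reduces matters to $\|W_{r_n+\delta_n} - W_{r_n}\|_{L^{2p}} \to 0$, which is the $L^{2p}$-convergence statement in Lemma~\ref{lem:additive_martingale_convergence_in_Lp}.

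For $\mathds{1}_{A_n^c} S_n$ I would apply Hölder with an exponent $\alpha > 1$ chosen so that $p\alpha < 1/\beta^2$ (possible since $p < 1/\beta^2$), yielding $\E[S_n^p \mathds{1}_{A_n^c}] \leq \E[S_n^{p\alpha}]^{1/\alpha}\, \P(A_n^c)^{1 - 1/\alpha}$. The probability factor vanishes, so the game is to bound $\E[S_n^{p\alpha}]$ uniformly in $n$. Splitting $S_n \leq \sup_s U(s) + U(0)$, the control of $U(0) = W_{r_n}(\beta)^2$ in $L^{p\alpha}$ is immediate from $2p\alpha < 2/\beta^2$ and Lemma~\ref{lem:additive_martingale_convergence_in_Lp}. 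For $\sup_s U(s)$, conditioning on $\cF_{as}$ and using Minkowski's inequality in $L^{p\alpha}$ for the random sum over $v \in \cN(as)$ reduces the pointwise bound to $\|W_{as}(2\beta)\|_{p\alpha} \cdot \|W_{r_n+(1-a)s}(\beta)\|_{2p\alpha}^2$, which remains bounded because $a\delta_n$ is small; Doob's inequality applied to the submartingale $W_{as}(2\beta)^{p\alpha}$ then lifts this to the supremum on the shrinking window. The main obstacle is precisely this uniform $L^{p\alpha}$ bound on $\sup_s U(s)$: its subtlety is that $p\alpha$ may exceed $1/(2\beta^2)$, so $W_t(2\beta)$ need not be bounded in $L^{p\alpha}$ as $t \to \infty$, but this is rescued by continuity of $\|W_t(2\beta)\|_{p\alpha}$ at $t = 0$ since $W_0(2\beta) = 1$. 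Once this bound is in place, the assembly from the two cases is routine.
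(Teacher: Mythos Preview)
Your treatment on the event $A_n$ is correct and is essentially the paper's almost-sure convergence step: since $\delta_n \to 0$, for almost every $\omega$ the event $A_n$ holds eventually, and on it the identity $U(s) = \e^{(1-\beta^2)as} W_{r_n+s}(\beta)^2$ gives $S_n \to 0$ pathwise.

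The gap is in the $A_n^c$ step, specifically the claim that ``Doob's inequality applied to the submartingale $W_{as}(2\beta)^{p\alpha}$ then lifts this to the supremum''. Your Minkowski computation gives, for each fixed $s$, the conditional bound $\E[U(s)^{p\alpha}\mid\cF_{as}]^{1/(p\alpha)} \leq C\,W_{as}(2\beta)$, hence $\E[U(s)^{p\alpha}] \leq C\,\E[W_{as}(2\beta)^{p\alpha}]$. But this is only a \emph{pointwise} moment bound; it says nothing about $\E[\sup_s U(s)^{p\alpha}]$. There is no pathwise inequality $U(s) \leq C\,W_{as}(2\beta)$, and you have not shown that $(U(s))_{s}$ itself is a submartingale for some filtration (indeed, both the split time $as$ and the terminal time $r_n+s$ move with $s$, and the cross terms in the square make any such property non-obvious). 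So Doob on $W_{as}(2\beta)$ does not transfer to $\sup_s U(s)$, and the uniform $L^{p\alpha}$ bound is not established.

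The missing observation is the pathwise inequality
\[
U(s) \;=\; \sum_{v\in\cN(as)} \bigl(\e^{\beta X_v(as)-\psi(\beta)as} W_{r_n+(1-a)s}^{(v,as)}(\beta)\bigr)^2
\;\leq\; \Bigl(\sum_{v\in\cN(as)} \e^{\beta X_v(as)-\psi(\beta)as} W_{r_n+(1-a)s}^{(v,as)}(\beta)\Bigr)^2
\;=\; W_{r_n+s}(\beta)^2,
\]
up to the harmless factor $\e^{(1-\beta^2)as}$ (this is just $\sum a_i^2 \leq (\sum a_i)^2$ for nonnegative $a_i$). Note that on $A_n$ this is an \emph{equality}, which you already computed; the point is that the \emph{inequality} holds everywhere. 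With it, $S_n \leq C\sup_{t\geq 0} W_t(\beta)^2$, which lies in $L^p$ by Doob's maximal inequality applied to $W_t(\beta)$ (since $2p<2/\beta^2$). This simultaneously fixes your $A_n^c$ step and collapses the whole argument to the paper's: almost-sure convergence (your $A_n$ analysis) plus this single dominating variable gives $L^p$ convergence directly, making the $A_n/A_n^c$ split and the H\"older trick unnecessary.
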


\begin{proof}
    We first prove that, on the event $\Omega'$ where $W_t(\beta)$ converges as $t \to \infty$, the process $S_n$ converges to $0$.
    Let us fix a realization $\omega \in \Omega'$ and denote by $\tau(\omega)$ the first time of branching of the BBM.
    There exists some rank $n_0(\omega)$ such that, for any $n \geq n_0(\omega)$, we have $t_{n+1} - t_n < \tau(\omega)$ and then
    \begin{align}
        S_n(\omega) &= \sup_{s \in [0, t_{n+1} - t_n]} \left|\e^{2\beta X_\varnothing(as)(\omega) - \psi(2\beta)as} W_{s + t_n - a(s + t_n)}^{(\varnothing, as)}(\beta)^2(\omega) - W_{t_n - at_n}(\beta)^2(\omega)\right| \nonumber \\
        &= \sup_{s \in [0, t_{n+1} - t_n]} \left|\e^{(2\psi(\beta) - \psi(2\beta))as} W_{s + t_n - at_n}(\beta)^2(\omega) - W_{t_n - at_n}(\beta)^2(\omega)\right|. \nonumber
    \end{align}
    Since $W_t(\beta)(\omega)$ converges as $t \to \infty$, the sequence $S_n(\omega)$ converges to $0$. 
    Since the event $\Omega'$ has probability $1$, we deduce that the process $S_n$ converges \as to $0$.
    
    In order to obtain a domination, we bound
    \begin{equation*}
        S_n \leq \sup_{s \in [0, 1]} \sum_{v \in \cN(as)} \e^{2\beta X_v(as) - \psi(2\beta)as} W_{s + t_n - a(s + t_n)}^{(v, as)}(\beta)^2 + W_{t_n - at_n}(\beta)^2.
    \end{equation*}
    Note that
    \begin{equation*}
        \sum_{v \in \cN(as)} \e^{2\beta X_v(as) - \psi(2\beta)as} W_{s + t_n - a(s + t_n)}^{(v, as)}(\beta)^2 \leq \e^{(2\psi(\beta) - \psi(2\beta))as} W_{s + t_n - at_n}(\beta)^2.
    \end{equation*}
    Hence,
    \begin{equation}\label{eq:Sn_domination}
        S_n \leq \left(\sup_{s \in [0, 1]} \e^{(2\psi(\beta) - \psi(2\beta))as} + 1\right) \sup_{s \in [0, 1]} W_{s + t_n - at_n}(\beta)^2.
    \end{equation}
    By Lemma~\ref{lem:additive_martingale_convergence_in_Lp}, the process $W_t(\beta)$ is bounded in $L^{2p}$ for any $p \in (1, 1/\beta^2)$.
    By Doob's maximal inequality, the domination in~\eqref{eq:Sn_domination} is in $L^p$.
    Thus, by dominated convergence, the process $S_n$ converges in $L^p$ to $0$.
\end{proof}

\subsection{Intermediate temperature}

\begin{proof}[Proof of Theorem~\ref{thm:asymptotics_of_the_overlap_distribution}.\ref{it:asymptotics_of_the_overlap_distribution_2}]
    Let us fix $\beta = \sqrt{2}/2$.
    By~\eqref{eq:overlap_rewriting}, we can rewrite
    \begin{equation*}
        \sqrt{at} \e^{at/2} \nu_{\beta, t}([a, 1]) = \sqrt{at} (G_t + R_t) W_t(\beta)^{-2},
    \end{equation*}
    where
    \begin{align}
        G_t & = \sum_{u \in \cN(at)} \e^{\sqrt{2} X_u(at) - 2at} W_\infty^{(u, at)}(\beta)^2, \nonumber \\
        R_t & = \sum_{u \in \cN(at)} \e^{\sqrt{2} X_u(at) - 2at} \left(W_{t - at}^{(u, at)}(\beta)^2 - W_\infty^{(u, at)}(\beta)^2\right). \nonumber
    \end{align}
    Since the additive martingale converges \as, it suffices to obtain the convergence in probability of $\sqrt{at}(G_t + R_t)$ toward $\sqrt{2/\pi} Z_\infty \Expec{W_\infty(\beta)^2}$.
    Let us justify that we can ignore the term $R_t$.
    By the triangle inequality and the branching property,
    \begin{equation*}
        \sqrt{at} \E \left[ |R_t| \middle| \cF_{at} \right] \leq \sqrt{at} W_{at}(\sqrt{2}) \E\left|W_{t - at}(\beta)^2 - W_\infty(\beta)^2\right|.
    \end{equation*}
    By~\eqref{eq:convergence_critical_additive_martingale} and Lemma~\ref{lem:additive_martingale_convergence_in_Lp}, the above process converges in probability to $0$ as $t \to \infty$, so does $\sqrt{at} R_t$.
    
    It remains to study $G_t$.
    Let us fix $\theta \in \R$.
    We use the conditional characteristic function
    \begin{equation*}
        \E \left[ \exp \left( \iu \theta \sqrt{at} G_t \right) \middle| \cF_{at} \right] = \prod_{u \in \cN(at)} \varphi(\lambda^{(u, at)}),
    \end{equation*}
    where
    \begin{equation*}
        \lambda^{(u, at)} = \theta \sqrt{at} \e^{\sqrt{2} X_u(at) - 2at}, \quad \varphi(\lambda) = \Expec{\e^{\iu \lambda W_\infty(\beta)^2}} = 1 + \iu \lambda \Expec{W_\infty(\beta)^2} + r_1(\lambda),
    \end{equation*}
    and $r_1(\lambda) = o(\lambda)$ as $\lambda \to 0$.
    In order to use the above expansion, we need to restrict to an event with high probability where $\max_{u \in \cN(at)} \lambda^{(u, at)}$ converges to $0$ as $t \to \infty$.
    We can choose the event $\Lambda = \Lambda_{\delta, at}$ defined in~\eqref{eq:bramson_estimate}, with an arbitrary $\delta \in (0,1/\sqrt{2})$.
    On this event, for $t$ large enough and for all $u \in \cN(at)$, we have $\varphi(\lambda^{(u, at)}) \in \C \setminus (-\infty, 0]$.
    This allows us to use the principal complex logarithm $\Log$ and to write
    \begin{align}
        \E \left[ \exp \left( \iu \theta \sqrt{at} G_t \right) \middle| \cF_{at} \right] \mathds{1}_\Lambda &= \exp \left( \sum_{u \in \cN(at)} \Log(1 + \iu \lambda^{(u, at)} \Expec{W_\infty(\beta)^2} + r_1(\lambda^{(u, at)})) \right) \mathds{1}_\Lambda \nonumber \\
        &= \exp \left( \iu \theta \sqrt{at} W_{at}(\sqrt{2}) \Expec{W_\infty(\beta)^2} + \sum_{u \in \cN(at)} r_2(\lambda^{(u, at)}) \right) \mathds{1}_\Lambda, \label{eq:conditional_characteristic_function_taylor_expansion}
    \end{align}
    where $r_2(\lambda) = o(\lambda)$ as $\lambda \to 0$.
    By choice of $\Lambda$, with probability $1$, we have
    \begin{equation}\label{eq:switching_sum_and_o}
        \sum_{u \in \cN(at)} r_2(\lambda^{(u, at)}) \mathds{1}_\Lambda = o \left( \sum_{u \in \cN(at)} \lambda^{(u, at)} \right) \mathds{1}_\Lambda = o \left( \sqrt{at} W_{at}(\sqrt{2}) \right) \mathds{1}_\Lambda.
    \end{equation}
    By~\eqref{eq:conditional_characteristic_function_taylor_expansion} and~\eqref{eq:switching_sum_and_o}, with probability $1$, we have
    \begin{equation*}
        \E \left[ \exp \left( \iu \theta \left( \sqrt{at} G_t - \sqrt{at} W_{at}(\sqrt{2}) \Expec{W_\infty(\beta)^2} \right) \right) \middle| \cF_{at} \right] \mathds{1}_\Lambda = \exp \left( o \left( \sqrt{at} W_{at}(\sqrt{2}) \right) \right) \mathds{1}_\Lambda.
    \end{equation*}
    By~\eqref{eq:convergence_critical_additive_martingale}, the above process converges in probability to $1$.
    This being true for any $\theta \in \R$, we deduce that $\sqrt{at} G_t - \sqrt{2/\pi} Z_\infty \Expec{W_\infty(\beta)^2}$ converges in probability to $0$ as $t \to \infty$, which concludes the proof.
\end{proof}

\subsection{Lower temperature}

Let $\beta \in (\sqrt{2}/2, \sqrt{2})$.
Here we follow the techniques from \cite[Theorem~2.5]{IksanovKoleskoMeiners2020}, where Iksanov, Kolesko and Meiners computed the fluctuations of the additive martingales of the BRW, using the convergence of the extremal process $\cE_t$.
The main differences are that we have to deal with this convergence jointly with the additive martingale and that we identify a stable distribution at the limit.
Recall that $m(t) = \sqrt{2}t - 3 \cdot 2^{-3/2} \log t$.
We rewrite
\begin{equation*}
    (at)^{3 \beta/\sqrt{2}} \e^{(\sqrt{2} - \beta)^2 at} \nu_{\beta,t}([a, 1]) = H_t W_t(\beta)^{-2},
\end{equation*}
where
\begin{equation*}
    H_t = \sum_{u \in \cN(at)} \e^{2 \beta (X_u(at) - m(at))} W_{t - at}^{(u, at)}(\beta)^2.
\end{equation*}
Let $\ell \in \R$, which has here to be thought as a large negative number.
We will use the following continuous approximation of $\mathds{1}_{[\ell, \infty)}$,
\begin{equation*}
    \chi_\ell^+(x) = \begin{cases}
        0 & \text{if } x \leq \ell, \\
        x - \ell &\text{if } \ell \leq x \leq \ell+1, \\
        1 & \text{if } x \geq \ell+1,
    \end{cases}
\end{equation*}
as well as $\chi_\ell^- = 1 - \chi_\ell^+$.
We decompose
\begin{align}
    H_t &= \sum_{u \in \cN(at)} \e^{2 \beta (X_u(at) - m(at))} \left(W_{t - at}^{(u, at)}(\beta)^2 - W_\infty^{(u, at)}(\beta)^2\right) \nonumber \\
    &\qquad + \sum_{u \in \cN(at)} \e^{2 \beta (X_u(at) - m(at))} W_\infty^{(u, at)}(\beta)^2 \chi_\ell^+(X_u(at) - m(at)) \nonumber \\
    &\qquad + \sum_{u \in \cN(at)} \e^{2 \beta (X_u(at) - m(at))} W_\infty^{(u, at)}(\beta)^2 \chi_\ell^-(X_u(at) - m(at)) \nonumber \\
    &= R_{t, \ell}^1 + H_{t, \ell} + R_{t, \ell}^2. \nonumber
\end{align}
Below, Lemma~\ref{lem:renormalized_subcritical_overlap_replacement} and Lemma~\ref{lem:renormalized_subcritical_overlap_contribution} state that $R_{t, \ell}^1$ and $R_{t, \ell}^2$ are negligible as $t \to \infty$ and $\ell \to -\infty$.
Lemma~\ref{lem:renormalized_subcritical_overlap_limit} gives a first characterization of the limit distribution.

\begin{lemma}\label{lem:renormalized_subcritical_overlap_replacement}
    For all $\ell \in \R$, $R_{t, \ell}^1$ converges in probability to $0$ as $t \to \infty$.
\end{lemma}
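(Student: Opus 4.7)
Note that $R_{t,\ell}^1$ does not actually depend on $\ell$, so it suffices to prove convergence in probability to $0$. The plan is to establish $L^q$-convergence for a carefully chosen fractional exponent $q \in (1/2, 1)$. Since Lemma~\ref{lem:additive_martingale_convergence_in_Lp} only provides $L^p$-boundedness for $p < 2/\beta^2$, and $2/\beta^2$ can be less than $2$ when $\beta \geq 1$, second-moment methods are unavailable in general across the regime $\beta \in (\sqrt{2}/2, \sqrt{2})$: fractional moments are mandatory.

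The starting point is the subadditivity $(\sum x_i)^q \leq \sum x_i^q$ valid for $q \in (0,1]$, together with the branching property at time $at$, under which the variables $(W_{t-at}^{(u,at)}(\beta))_{u \in \cN(at)}$ are iid copies of $W_{t-at}(\beta)$, independent of $\cF_{at}$. Taking expectations then yields
\[
\E[|R_{t,\ell}^1|^q] \leq \E\Bigl[\sum_{u \in \cN(at)} \e^{2\beta q(X_u(at)-m(at))}\Bigr] \cdot \E\bigl|W_{t-at}(\beta)^2 - W_\infty(\beta)^2\bigr|^q.
\]
The moment factor is controlled by factoring $W^2 - W_\infty^2 = (W-W_\infty)(W+W_\infty)$, applying Cauchy--Schwarz, and using Lemma~\ref{lem:additive_martingale_convergence_in_Lp} with exponent $2q$; this produces the decay $C\,\e^{-(2q-1)(1-q\beta^2)(t-at)/2}$, provided that $2q \in (1, 2/\beta^2) \cap (1, 2]$. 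The position factor is computed directly via the many-to-one formula and the definition of $m(\cdot)$:
\[
\E\Bigl[\sum_{u \in \cN(at)} \e^{2\beta q(X_u(at)-m(at))}\Bigr] = (at)^{3\beta q/\sqrt{2}}\,\e^{(\sqrt{2}\beta q - 1)^2 at}.
\]

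The main obstacle is choosing $q$ so that the two factors cooperate: the position factor grows exponentially in $at$ unless $\sqrt{2}\beta q = 1$, while the moment factor shrinks only when $2q > 1$ and $q\beta^2 < 1$. The key observation is that the value $q^* := 1/(\sqrt{2}\beta)$ simultaneously satisfies $q^* > 1/2$, $q^* < 1/\beta^2$, and $q^* < 1$, precisely because $\beta \in (\sqrt{2}/2, \sqrt{2})$. With this choice the exponential term $\e^{(\sqrt{2}\beta q^* - 1)^2 at}$ equals $1$, and the identity $3\beta q^*/\sqrt{2} = 3/2$ leaves
\[
\E[|R_{t,\ell}^1|^{q^*}] \leq C\,(at)^{3/2}\,\e^{-(2q^* - 1)(1 - q^*\beta^2)(t-at)/2} \xrightarrow[t\to\infty]{} 0,
\]
which gives convergence in $L^{q^*}$ and hence in probability.
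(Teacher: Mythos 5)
Your proof is correct and reaches the conclusion via a genuinely different route. The paper conditions on $\cF_{at}$ and bounds $\condExpec{|R_{t,\ell}^1|^p}{\cF_{at}}$ (for a fixed $p$ with $\sqrt{2}/2\beta < p < 1/\beta^2$) by the product of a deterministic factor $\E|W_{t-at}(\beta)^2 - W_\infty(\beta)^2|^p \to 0$ and a random factor $\sum_{u\in\cN(at)}\e^{p2\beta(X_u(at)-m(at))}$; the latter is tight because $p2\beta>\sqrt{2}$ places it in the supercritical regime covered by \eqref{eq:supercritical_W}, and Slutsky's theorem concludes. You instead take full expectations, compute the exponential sum's mean explicitly via many-to-one, and then tune the fractional exponent to the \emph{boundary} value $q^* = 1/(\sqrt{2}\beta)$ (i.e.\@ $2\beta q^* = \sqrt{2}$, precisely where \eqref{eq:supercritical_W} would not apply) so that the exponential growth $\e^{(\sqrt{2}\beta q - 1)^2 at}$ vanishes and only the polynomial $(at)^{3/2}$ survives, which the $\e^{-c(1-a)t}$ from the martingale approximation overwhelms. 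This avoids both \eqref{eq:supercritical_W} and Slutsky's theorem, at the mild cost of a moment computation; it is more elementary and self-contained, while the paper's version is somewhat more robust since it works for any exponent in an open interval rather than needing an exact balance. One thing worth being explicit about is that the subadditivity step needs $|R_{t,\ell}^1|\leq\sum_u\e^{2\beta(X_u(at)-m(at))}|W_{t-at}^{(u,at)}(\beta)^2-W_\infty^{(u,at)}(\beta)^2|$ before raising to the power $q^*$, since the summands themselves are signed; and the Cauchy--Schwarz step requires the boundedness of $\E[(W_{t-at}(\beta)+W_\infty(\beta))^{2q^*}]$, which holds since $2q^*<2/\beta^2$. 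These are minor, and the argument goes through.
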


\begin{proof}
    We consider $p \in (0,1]$ such that $\sqrt{2}/2\beta < p < 1/\beta^2$ which is possible because $\beta \in (\sqrt{2}/2, \sqrt{2})$.
    Using the subadditivity of $x \mapsto x^p$ on $[0, \infty)$, we have
    \begin{equation*}
        \E \left[ |R_{t, \ell}^1|^p \middle| \cF_{at} \right]
        \leq \Expec{ \left|W_{t - at}(\beta)^2 - W_\infty(\beta)^2\right|^p } \sum_{u \in \cN(at)} \e^{p2\beta(X_u(at) - m(at))}.
    \end{equation*}
    Since $p <1/\beta^2$, the expectation on the right-hand side tends to 0 (see Lemma~\ref{lem:additive_martingale_convergence_in_Lp}).
    By~\eqref{eq:supercritical_W}, the sum on the right-hand side converges in distribution to a finite limit because $p 2\beta > \sqrt{2}$.
    Therefore, by Slutsky's theorem, the above quantity converges in distribution to $0$ as $t \to \infty$.
    Then, $R_{t, \ell}^1$ converges in probability to $0$ as $t \to \infty$.
\end{proof}

\begin{lemma}\label{lem:renormalized_subcritical_overlap_contribution}
    For any $\delta > 0$, $\lim_{\ell \to -\infty} \limsup_{t \to \infty} \P(R_{t,\ell}^2 > \delta) = 0$.
\end{lemma}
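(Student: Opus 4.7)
The strategy is to control $R_{t,\ell}^2$ via a $p$-th moment for some $p \in (0,1]$ small enough that $W_\infty(\beta)^{2p}$ is integrable, and then reduce the remaining (deterministic, given the particles at time $at$) sum to a functional of the extremal process of the BBM. Choose $p$ with $\sqrt{2}/(2\beta) < p \leq 1$ and $p < 1/\beta^2$, which is possible because $\beta \in (\sqrt{2}/2, \sqrt{2})$. By subadditivity of $x \mapsto x^p$ on $[0,\infty)$ applied inside the sum defining $R_{t,\ell}^2$, the branching property at time $at$, and Lemma~\ref{lem:additive_martingale_convergence_in_Lp} (which ensures $C := \Expec{W_\infty(\beta)^{2p}} < \infty$ since $2p < 2/\beta^2$), one obtains
\[
    \condExpec{(R_{t,\ell}^2)^p}{\cF_{at}}
    \leq C \cdot S_{t,\ell},
    \qquad S_{t,\ell} := \sum_{u \in \cN(at)} \e^{2\beta p (X_u(at) - m(at))} \chi_\ell^-(X_u(at) - m(at))^p.
\]
Markov's inequality applied conditionally on $\cF_{at}$ then yields $\Prob{R_{t,\ell}^2 > \delta} \leq \Expec{\min(CS_{t,\ell}/\delta^p, 1)}$, so by dominated convergence it suffices to prove $\lim_{\ell \to -\infty}\limsup_{t \to \infty} \Prob{S_{t,\ell} > \eta} = 0$ for every $\eta > 0$.

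A direct piecewise comparison gives $\chi_\ell^-(x)^p \leq \chi_{\ell+1}^-(x)$ for all $x \in \R$ and $p \in (0,1]$ (both are $1$ on $(-\infty, \ell]$, and $\chi_\ell^-(x)^p \leq 1 = \chi_{\ell+1}^-(x)$ on $[\ell, \ell+1]$, while $\chi_\ell^-(x)^p = 0$ on $[\ell+1, \infty)$). Writing $\chi_{\ell+1}^- = 1 - \chi_{\ell+1}^+$,
\[
    S_{t,\ell} \leq T_t - V_{t,\ell+1},
    \quad T_t := \sum_{u \in \cN(at)} \e^{2\beta p(X_u(at) - m(at))},
    \quad V_{t,\ell+1} := \sum_{u \in \cN(at)} \e^{2\beta p(X_u(at)-m(at))} \chi_{\ell+1}^+(X_u(at)-m(at)).
\]
Since $2\beta p > \sqrt{2}$, the convergence \eqref{eq:supercritical_W} combined with Lemma~\ref{lem:sum_gibbs_weights_ppp} shows that $T_t$ converges in distribution to $T_\infty := \sum_k \e^{2\beta p \xi_k}$, which is a.s.\@ finite; here $(\xi_k)_{k\geq 1}$ denotes the points of $\cE_\infty$. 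The function $x \mapsto \e^{2\beta p x} \chi_{\ell+1}^+(x)$ vanishes on $(-\infty, \ell+1]$, so by an adaptation of the joint convergence \cite[Lemma~5.2]{IksanovKoleskoMeiners2020} (extended to BBM in \cite[Lemma~5.9]{Chataignier2024}) — after truncating the exponential weight above a large level $N$ and controlling the right-tail $\sum_u \e^{2\beta p(X_u(at)-m(at))} \mathds{1}_{X_u(at)-m(at) \geq N}$ via the convergence of $T_t$ to the finite limit $T_\infty$ — one obtains $T_t - V_{t,\ell+1} \to \sum_k \e^{2\beta p \xi_k} \chi_{\ell+1}^-(\xi_k)$ in distribution as $t \to \infty$.

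The limit is then handled by dominated convergence: for each fixed $\xi_k \in \R$, $\chi_{\ell+1}^-(\xi_k) \to 0$ as $\ell \to -\infty$, while the sum is a.s.\@ bounded by $T_\infty < \infty$. Hence $\sum_k \e^{2\beta p \xi_k} \chi_{\ell+1}^-(\xi_k) \to 0$ a.s., which combined with the previous paragraph yields $\lim_{\ell \to -\infty}\limsup_{t \to \infty} \Prob{S_{t,\ell} > \eta} = 0$ and thus the claim. The main technical obstacle lies in the joint convergence invoked above: the test function $\e^{2\beta p x} \chi_{\ell+1}^+(x)$ is unbounded as $x \to +\infty$, so the extremal-process convergence is not applicable off the shelf. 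The right-truncation at level $N$ and, crucially, the uniform-in-$t$ smallness (in probability) of the corresponding truncation error — which must be extracted from the convergence $T_t \to T_\infty$ to a finite limit — are what requires careful bookkeeping.
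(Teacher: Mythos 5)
Your reduction matches the paper's: you bound $\condExpec{(R_{t,\ell}^2)^p}{\cF_{at}}$ by $C\cE_{at}(\e^{2\beta p x}\chi_\ell^-(x)^p)$ for a suitable $p\in(\sqrt{2}/2\beta,1/\beta^2)\cap(0,1]$, and then reduce to tightness of this extremal-process functional as $\ell\to-\infty$. At exactly this point, the paper cites Bonnefont's \cite[Proposition~A.2]{Bonnefont2022} and is done. You instead attempt to re-prove the tightness, and that is where the argument has a genuine gap.

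Your decomposition $S_{t,\ell}\leq T_t - V_{t,\ell+1}$ with $T_t=\cE_{at}(\e^{2\beta p x})$ and $V_{t,\ell+1}=\cE_{at}(\e^{2\beta p x}\chi_{\ell+1}^+(x))$ is fine, but the claimed distributional convergence $T_t-V_{t,\ell+1}\to \sum_k\e^{2\beta p\xi_k}\chi_{\ell+1}^-(\xi_k)$ is not established. You only have two \emph{marginal} convergences: $T_t$ converges in distribution by \eqref{eq:supercritical_W}, and $V_{t,\ell+1}$ converges in distribution because its test function has support bounded below (after an $N$-truncation to handle unboundedness above, which is indeed manageable — the paper does a similar truncation in a footnote in the proof of Lemma~\ref{lem:renormalized_subcritical_overlap_limit}, and the contribution from large $x$ is controlled by tightness of $\max_u X_u(at)-m(at)$, not by the convergence of $T_t$). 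Subtracting two sequences that converge only marginally does not give convergence of the difference; one needs a \emph{joint} convergence $(T_t,V_{t,\ell+1})\to(T_\infty,V_{\infty,\ell+1})$ with the limits built on the \emph{same} extremal process $\cE_\infty$. Nothing in the paper provides that coupling: \eqref{eq:supercritical_W} is a separate martingale-based result, and Lemma~\ref{lem:sum_gibbs_weights_ppp} gives only an equality in distribution $\sum_k\e^{\gamma\xi_k}\overset{d}{=}C Z_\infty^{\gamma/\sqrt{2}}S_\gamma$, not a joint limit. A simple union bound $\P(T_t-V_{t,\ell+1}>\eta)\leq\P(T_t>x)+\P(V_{t,\ell+1}\leq x-\eta)$ will not close the gap either, since infimizing over $x$ leaves $1-\sup_x\P(T_\infty\in(x-\eta,x))$, which is not small.

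You also mislocate the main obstacle. The unboundedness of $\e^{2\beta p x}\chi_{\ell+1}^+(x)$ as $x\to+\infty$ is the mild issue; the hard one is that the test function $g_\ell(x)=\e^{2\beta p x}\chi_{\ell+1}^-(x)$ appearing in $T_t - V_{t,\ell+1}=\cE_{at}(g_\ell)$ has support $(-\infty,\ell+2]$, unbounded \emph{below}. Because the extremal process only converges vaguely and its points accumulate towards $-\infty$, $\cE_{at}(g_\ell)$ is not a continuous functional in that topology, so no off-the-shelf extremal-process convergence applies. Controlling this left tail uniformly in $t$ — i.e.\@ showing $\lim_{\ell\to-\infty}\limsup_{t\to\infty}\P(\cE_{at}(\e^{2\beta p x}\mathds{1}_{x\leq\ell})>\eta)=0$ — is precisely the content of Bonnefont's Proposition~A.2 and requires a barrier/ballot-theorem estimate (a first-moment computation without a barrier diverges like $t$). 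Your proof, as written, therefore assumes the very statement that the citation supplies.
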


\begin{proof}
    Note that if a process $(X_{t, \ell})_{t \geq 0, \ell \in \R}$ takes its values in $[0, \infty)$, then the following two statements are equivalent
    \begin{enumerate}
        \item\label{step:renormalized_subcritical_overlap_2_2_1} for any $\delta > 0$, $\lim_{\ell \to -\infty} \limsup_{t \to \infty} \P(X_{t, \ell} \geq \delta) = 0$,
        \item\label{step:renormalized_subcritical_overlap_2_2_2} $\lim_{\ell \to -\infty} \limsup_{t \to \infty} \Expec{X_{t, \ell} \wedge 1} = 0$.
    \end{enumerate}
    We consider $p \in (0,1]$ such that $\sqrt{2}/2\beta < p < 1/\beta^2$.
    By Jensen's inequality,
    \begin{equation*}
        \Expec{(R_{t, \ell}^2)^p \wedge 1} \leq \Expec{\E \left[ (R_{t, \ell}^2)^p \middle| \cF_{at} \right] \wedge 1}.
    \end{equation*}
    But, using the subadditivity of $x \mapsto x^p$ on $[0, \infty)$,
    \begin{equation*}
        \E \left[ (R_{t, \ell}^2)^p \middle| \cF_{at} \right] \leq \Expec{W_\infty(\beta)^{2p}} \sum_{u \in \cN(at)} \e^{p2 \beta(X_u(at) - m(at))} \chi_\ell^-(X_u(at) - m(at))^p 
        = C \cE_{at}(\e^{p2 \beta x} \chi_\ell^-(x)^p),
    \end{equation*}
    using that $W_\infty(\beta) \in L^{2p}$ because $p < 1/\beta^2$ (see Lemma~\ref{lem:additive_martingale_convergence_in_Lp}).
    Since \ref{step:renormalized_subcritical_overlap_2_2_1} and \ref{step:renormalized_subcritical_overlap_2_2_2} are equivalent, it suffices to obtain that, for any $\delta > 0$,
    \begin{equation*}
        \lim_{\ell \to -\infty} \limsup_{t \to \infty} \P(\cE_{at}(\e^{p2 \beta x} \chi_\ell^-(x)^p) \geq \delta) = 0.
    \end{equation*}
    This is proved by Bonnefont in \cite[Proposition~A.2]{Bonnefont2022} (this requires $p2\beta > \sqrt{2}$, which is ensured by our choice of $p$).
\end{proof}

\begin{lemma}\label{lem:renormalized_subcritical_overlap_limit}
    We have
    \begin{equation*}
        (at)^{3 \beta/\sqrt{2}} \e^{(\sqrt{2} - \beta)^2 at} \nu_{\beta, t}([a, 1]) \xrightarrow[t \to \infty]{} X_{\mathrm{over}} = \frac{1}{W_\infty(\beta)^2} \sum_{k \geq 1} \e^{2 \beta \xi_k} W_k^2, \quad \text{in distribution},
    \end{equation*}
    where $W_k$, $k \geq 1$, are \iid copies of $W_\infty(\beta)$, independent of $W_\infty(\beta)$ and $\cE_\infty$.
\end{lemma}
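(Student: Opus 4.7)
The strategy is to apply the joint convergence of the marked extremal process \eqref{eq:joint_convergence} to the intermediate quantity $H_{t,\ell}$ for fixed $\ell$, and then remove the cut-off $\ell$ by combining with Lemmas \ref{lem:renormalized_subcritical_overlap_replacement} and \ref{lem:renormalized_subcritical_overlap_contribution}. Setting $Y_u = X_u(at) - m(at)$ and $W_u = W_\infty^{(u,at)}(\beta)$, one recognizes
\[
    H_{t,\ell} = \cE_{at}^*(f_\ell), \qquad H_\ell^* := \sum_{k \ge 1} \e^{2\beta \xi_k} W_k^2 \chi_\ell^+(\xi_k) = \cE_\infty^*(f_\ell),
\]
with $f_\ell(x, y) = \e^{2\beta x} y^2 \chi_\ell^+(x)$. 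This function vanishes for $x \le \ell$ but is unbounded, so \eqref{eq:joint_convergence} cannot be applied to $f_\ell$ directly. I would therefore introduce the truncated version $f_{\ell, N} = f_\ell \wedge N$, which is continuous, bounded, and still vanishes for $x \le \ell$, so that
\[
    \bigl(\cE_{at}^*(f_{\ell,N}), W_{at}(\beta)\bigr) \xrightarrow[t \to \infty]{d} \bigl(\cE_\infty^*(f_{\ell,N}), W_\infty(\beta)\bigr).
\]
On the limit side, $\cE_\infty^*(f_{\ell,N}) \uparrow H_\ell^*$ a.s.\@ by monotone convergence; the a.s.\@ finiteness of $H_\ell^*$, and eventually of $H_\infty^* := \sum_{k} \e^{2\beta \xi_k} W_k^2$, follows from Lemma \ref{lem:sum_gibbs_weights_ppp} combined with a $p$-th moment estimate for the i.i.d.\@ marks $W_k$ with $p \in (\sqrt{2}/(2\beta), 1/\beta^2)$.

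The core technical point is to establish the uniform-in-$t$ tightness
\[
    \lim_{N \to \infty} \limsup_{t \to \infty} \Prob{\cE_{at}^*(f_\ell) - \cE_{at}^*(f_{\ell,N}) > \delta} = 0
\]
for every $\delta > 0$. I would fix $p \in (\sqrt{2}/(2\beta), \min(1, 1/\beta^2))$, which is non-empty precisely because $\beta \in (\sqrt{2}/2, \sqrt{2})$, and choose $q > 1$ with $pq < 1/\beta^2$. Using subadditivity of $x \mapsto x^p$, the pointwise bound $(a - N)^+ \le a\, \mathds{1}_{a > N}$, the Markov tail estimate $\Expec{W_\infty(\beta)^{2p} \mathds{1}_{W_\infty(\beta) > M}} \le C M^{-2(q-1)p}$ (valid since $W_\infty(\beta)^{2pq} \in L^1$ by Lemma \ref{lem:additive_martingale_convergence_in_Lp}), and the branching property at time $at$, one obtains
\[
    \condExpec{\bigl(\cE_{at}^*(f_\ell) - \cE_{at}^*(f_{\ell,N})\bigr)^p}{\cF_{at}} \le C N^{-(q-1)p} \sum_{u \in \cN(at)} \e^{2pq\beta Y_u} \mathds{1}_{Y_u > \ell}.
\]
Since $2pq\beta > \sqrt{2}$, \eqref{eq:supercritical_W} applied at inverse temperature $2pq\beta$ shows that the right-hand sum, once properly rescaled, converges in distribution and is in particular tight in $t$; combining with the prefactor $N^{-(q-1)p}$ yields the claim.

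Putting everything together, $(H_{t,\ell}, W_t(\beta)) \to (H_\ell^*, W_\infty(\beta))$ in distribution for each fixed $\ell$ (using also that $W_t(\beta) - W_{at}(\beta) \to 0$ in probability, by Lemma \ref{lem:additive_martingale_convergence_in_Lp}). Letting $\ell \to -\infty$, Lemmas \ref{lem:renormalized_subcritical_overlap_replacement} and \ref{lem:renormalized_subcritical_overlap_contribution} give $H_t - H_{t,\ell} \to 0$ in probability after $t \to \infty$ then $\ell \to -\infty$, and $H_\ell^* \to H_\infty^*$ a.s. A standard diagonal argument then delivers $H_t \to H_\infty^*$ in distribution jointly with $W_t(\beta) \to W_\infty(\beta)$, and Slutsky's theorem, using that $W_\infty(\beta) > 0$ a.s., finally yields $H_t / W_t(\beta)^2 \to X_{\mathrm{over}}$ in distribution, as desired. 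The main obstacle is precisely the uniform-in-$t$ truncation estimate: the exponent $p$ must simultaneously satisfy $p \le 1$ (for subadditivity), $p < 1/\beta^2$ (for moments of $W_\infty(\beta)$), and $2p\beta > \sqrt{2}$ (to invoke \eqref{eq:supercritical_W}), and the compatibility of these three constraints is exactly equivalent to the assumption $\beta \in (\sqrt{2}/2, \sqrt{2})$.
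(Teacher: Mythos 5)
Your proof follows essentially the same route as the paper: establish finiteness of $X_{\mathrm{over}}$ via a $p$-th moment with $p \in (\sqrt{2}/2\beta, 1/\beta^2)$, apply the joint convergence \eqref{eq:joint_convergence} to the test function $f_\ell(x,y) = \e^{2\beta x}\chi_\ell^+(x)y^2$ for fixed $\ell$, then remove the cutoff $\ell$ using Lemmas \ref{lem:renormalized_subcritical_overlap_replacement}--\ref{lem:renormalized_subcritical_overlap_contribution} and \cite[Theorem~3.2]{Billingsley1999}. The one place where you add genuine content is the unboundedness of $f_\ell$ in $y$: the paper handles this in a brief footnote (``apply \eqref{eq:joint_convergence} to $f \wedge M$ and let $M \to \infty$''), while you give the required uniform-in-$t$ tightness bound explicitly, via subadditivity of $x\mapsto x^p$, the tail estimate $\Expec{W^{2p}\mathds{1}_{W>M}} \leq CM^{-2p(q-1)}$, the branching property, and \eqref{eq:supercritical_W} at inverse temperature $2pq\beta > \sqrt{2}$. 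That argument is correct and makes the footnote's claim precise; otherwise the decomposition, the key lemmas invoked, and the Slutsky step at the end all match the paper's proof.
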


\begin{proof}
    We first check that $X_{\mathrm{over}}$ is well-defined by showing
    \begin{equation} \label{eq:sum_finite}
        \sum_{k \geq 0} \e^{2 \beta \xi_k} W_k^2 < \infty, 
        \quad \text{almost surely}.
    \end{equation}
    To see this, fix $p \in (0,1]$ such that $\sqrt{2}/2\beta < p < 1/\beta^2$. 
    Then, by subadditivity,
    \begin{equation*}
        \E \left[ \left(\sum_{k \geq 1} \e^{2 \beta \xi_k} W_k^2\right)^p \middle| \cE_\infty \right] 
        = \sum_{k \geq 1} \e^{p2 \beta \xi_k} \Expec{W_\infty(\beta)^{2p}},
    \end{equation*}
    which is \as finite, by Lemma~\ref{lem:sum_gibbs_weights_ppp} and Lemma~\ref{lem:additive_martingale_convergence_in_Lp}, proving~\eqref{eq:sum_finite}.

    Now, in order to prove the lemma, by Lemma~\ref{lem:renormalized_subcritical_overlap_replacement}, Lemma~\ref{lem:renormalized_subcritical_overlap_contribution} and \cite[Theorem~3.2]{Billingsley1999}, it is sufficient to show that, for all $\ell \in \R$,
    \begin{equation}\label{eq:renormalized_subcritical_overlap_2_1}
        \frac{H_{t, \ell}}{W_t(\beta)^2} \xrightarrow[t \to \infty]{} \frac{1}{W_\infty(\beta)^2} \sum_{k \geq 1} \e^{2 \beta \xi_k} \chi_\ell^+(\xi_k) W_k^2, \quad \text{in distribution},
    \end{equation}
    and that
    \begin{equation}\label{eq:renormalized_subcritical_overlap_2_2}
        \frac{1}{W_\infty(\beta)^2} \sum_{k \geq 1} \e^{2 \beta \xi_k} \chi_\ell^+(\xi_k) W_k^2 \xrightarrow[\ell \to -\infty]{} X_{\mathrm{over}}, \quad \text{in distribution}.
    \end{equation}
    Note that~\eqref{eq:renormalized_subcritical_overlap_2_2} is a direct consequence of~\eqref{eq:sum_finite}.
    To prove~\eqref{eq:renormalized_subcritical_overlap_2_1}, we introduce the test function $f(x,y) = \e^{2\beta x} \chi_\ell^+(x) y^2$.
    By~\eqref{eq:joint_convergence}%
    \footnote{Note that $f$ is not bounded as required in~\eqref{eq:joint_convergence}. However, one can first apply~\eqref{eq:joint_convergence} to $f \wedge M$ for some $M> 0$, and then let $M \to \infty$, using that $\cE^*(f) < \infty$ \as by~\eqref{eq:sum_finite}.}, 
    $(\cE_{at}^*(f), W_{at}(\beta))$ converges in distribution to $(\cE_\infty^*(f), W_\infty(\beta))$.
    By Slutsky's theorem, the same is true for $(\cE_{at}^*(f), W_t(\beta))$ and hence we get~\eqref{eq:renormalized_subcritical_overlap_2_1}.
\end{proof}

We are now able to finish the proof of Theorem~\ref{thm:asymptotics_of_the_overlap_distribution}.

\begin{proof}[Proof of Theorem~\ref{thm:asymptotics_of_the_overlap_distribution}.\ref{it:asymptotics_of_the_overlap_distribution_3}] 
    By definition of $\cE_\infty$, we can rewrite
    \begin{equation}\label{eq:renormalized_subcritical_overlap_2_equal_in_distribution_1}
        X_{\mathrm{over}} = \frac{(C Z_\infty)^{2\beta/\sqrt{2}}}{W_\infty(\beta)^2} \sum_{i\geq 1} \e^{2\beta(p_i + X_i)},
    \end{equation}
    where $X_i = \frac{1}{2\beta} \log \sum_j \e^{2 \beta \Delta_{ij}} W_{ij}^2$ and where $W_{ij}$ are \iid copies of $W_\infty(\beta)$, independent of $\cF_\infty$, $\cP$, $(\cD_i)_{i \geq 1}$.
    Now, since the variables $X_i$ for $i \geq 1$ are \iid and independent of $\cP$, it follows from \cite[Proposition~8.7.a]{BolthausenSznitman2002} that
    \begin{equation}\label{eq:renormalized_subcritical_overlap_2_Bolthausen_Sznitman}
        \sum_{i \geq 1} \delta_{p_i + X_i} \overset{d}{=} \sum_{i \geq 1} \delta_{p_i + \frac{1}{\sqrt{2}} \log \Expec{\e^{\sqrt{2} X_1}}},
    \end{equation}
    under the condition that $\E[\e^{\sqrt{2} X_1}] < \infty$, which we check as follows: taking $p \in (\sqrt{2}/\beta, 2/\beta^2) \cap [1, 2]$, using first Jensen's inequality and then subadditivity of $x \mapsto x^{p/2}$ on $[0, \infty)$, we obtain
    \begin{equation*}
        \Expec{\e^{\sqrt{2} X_1}}^{\beta p /\sqrt{2}}
        \leq \Expec{\e^{\beta p X_1}}
        = \Expec{\left(\sum_{j \geq 1} \e^{2 \beta \Delta_{1j}} W_{1j}^2\right)^{p/2}} 
        \leq \Expec{\sum_{j \geq 1} \e^{p \beta \Delta_{1j}}}
        \Expec{W_\infty(\beta)^{p}}
        < \infty,
    \end{equation*}
    where the last quantity is finite by Lemma~\ref{lem:sum_gibbs_weights_ppp} and Lemma~\ref{lem:additive_martingale_convergence_in_Lp}.
    By~\eqref{eq:renormalized_subcritical_overlap_2_equal_in_distribution_1} and~\eqref{eq:renormalized_subcritical_overlap_2_Bolthausen_Sznitman},
    \begin{equation}\label{eq:renormalized_subcritical_overlap_2_equal_in_distribution_2}
        X_{\mathrm{over}} \overset{d}{=} \frac{\left(C Z_\infty \Expec{\e^{\sqrt{2} X_1}}\right)^{2\beta/\sqrt{2}}}{W_\infty(\beta)^2} \sum_{i\geq 1} \e^{2\beta p_i}.
    \end{equation}
    Finally, by Lemma~\ref{lem:sum_gibbs_weights_ppp}, the sum $S_{2\beta} = \sum_i \e^{2\beta p_i}$ is a non-degenerate $(\sqrt{2}/2\beta)$-stable random variable.
    This remark together with Lemma~\ref{lem:renormalized_subcritical_overlap_limit} and~\eqref{eq:renormalized_subcritical_overlap_2_equal_in_distribution_2} conclude the proof of Theorem~\ref{thm:asymptotics_of_the_overlap_distribution}.\ref{it:asymptotics_of_the_overlap_distribution_3}.
\end{proof}

\section{Proof of Theorem~\ref{thm:asymptotics_in_mean}}
\label{sec:mean}

In this section, the key tools will be the changes of measure~\eqref{eq:definition_of_Q_b} and~\eqref{eq:definition_of_Q_bt} at inverse temperature $2\beta$, as well as the associated spinal decomposition.
We denote by $\Pi$ the point process whose points $s_i$ for $i \geq 1$ are the ordered spinal branching times.
Recall that $\Pi$ is a homogeneous Poisson point process on $[0,\infty)$ with intensity 1 under $\P$ and $2$ under $\Q_{2\beta}$.
We also write $s_0 = 0$.
In what follows, it will be convenient to decompose the additive martingale along the spine,
\begin{equation} \label{eq:decompo_W_t_spine}
    W_t(\beta) = \sum_{i \geq 1} \e^{\beta X_\xi(s_i) - \psi(\beta)s_i} W_{t - s_i}^{(i)}(\beta) \mathds{1}_{s_i \leq t} + \e^{\beta X_\xi(t) - \psi(\beta)t},
\end{equation}
where, if $d_{u \wedge v}$ denotes the deathtime of the last common ancestor of $u$ and $v$,
\begin{equation*}
    W_s^{(i)}(\beta) = \sum_{u \in \cN(s_i + s), d_{u \wedge \xi(s_i + s)} = s_i} \e^{\beta (X_u(s_i + s) - X_u(s_i)) - \psi(\beta)s}.
\end{equation*}
Under both probability measures $\P$ and $\Q_{2\beta}$, given $\Pi$, the processes $(W_s^{(i)}(\beta))_{s \geq 0}$ for $i \geq 1$ are the additive martingales of independent BBMs without spine.
Therefore, they converge $\P$-\as and $\Q_{2\beta}$-\as to \iid copies of $W_\infty(\beta)$, that we denote by $W_\infty^{(i)}(\beta)$.
This allows us to define
\begin{equation}\label{eq:definition_W}
    W(\beta) = \sum_{i \geq 1} \e^{\beta X_\xi(s_i) - \psi(\beta)s_i} W_\infty^{(i)}(\beta),
\end{equation}
which is the \as limit of $W_t(\beta)$ under $\Q_{2\beta}$ as showed in the following lemma. Note that we distinguish $W(\beta)$ and $W_\infty(\beta)$ for now, but, once this lemma is proved, we can afterwards say that they are the same quantity, see Remark~\ref{rem:W(beta)_vs_Winfty(beta)}.
We also emphasize here that the convergence of $W_t(\beta)$ is classically studied under $\Q_\beta$ (see \eg~\cite{Kyprianou2004}), but we need it here under $\Q_{2\beta}$.

\begin{lemma}\label{lem:almost_sure_convergence_of_W_t}
    If $\beta \in [0, \sqrt{2\smash{/}3})$, the random variable $W(\beta)$ is $\Q_{2\beta}$-\as positive and finite.
    If $\beta \in [\sqrt{2\smash{/}3}, \sqrt{2})$, it is $\Q_{2\beta}$-\as infinite.
    In both cases, it is the $\Q_{2\beta}$-\as limit of the process $W_t(\beta)$ as $t \to \infty$.
\end{lemma}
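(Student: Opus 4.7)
The plan is to exploit the spinal decomposition under $\Q_{2\beta}$: under this measure, the spine $X_\xi$ is a Brownian motion with drift $2\beta$, the spinal branching times $\Pi$ form a Poisson point process of rate $2$ independent of $X_\xi$, and, conditionally on $(\xi, \Pi)$, the processes $(W_s^{(i)}(\beta))_{s \geq 0}$ are independent additive martingales of standard BBMs converging almost surely to i.i.d.\@ copies $W_\infty^{(i)}(\beta)$ of $W_\infty(\beta)$, which are positive since $\beta < \sqrt{2}$. Writing
\[
W_t(\beta) = \sum_{i:\, s_i \leq t} \e^{\beta X_\xi(s_i) - \psi(\beta) s_i}\, W_{t-s_i}^{(i)}(\beta) + \e^{\beta X_\xi(t) - \psi(\beta) t},
\]
all of the analysis reduces to controlling these two pieces.

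For $\beta \in [0, \sqrt{2/3})$, I would obtain finiteness of $W(\beta)$ by subadditivity of $x \mapsto x^p$ on $[0,\infty)$. Choose $p \in (0, 1]$ such that $\mu(p) := 2p\beta^2 + p^2\beta^2/2 - p\psi(\beta) < 0$, which is possible exactly when $(3+p)\beta^2 < 2$, i.e.\@ when $\beta^2 < 2/3$. Since $\E_{\Q_{2\beta}}[\e^{p\beta X_\xi(s)}] = \e^{(2p\beta^2 + p^2\beta^2/2) s}$, conditioning on $(\xi, \Pi)$ and using subadditivity gives
\[
\E_{\Q_{2\beta}}[W(\beta)^p] \leq \Expec{W_\infty(\beta)^p} \cdot \E_{\Q_{2\beta}}\!\left[\sum_{i \geq 1} \e^{p\beta X_\xi(s_i) - p\psi(\beta) s_i}\right] = \Expec{W_\infty(\beta)^p} \int_0^\infty 2 \e^{\mu(p) s} \d{s} < \infty,
\]
so $W(\beta) < \infty$ $\Q_{2\beta}$-\as Positivity is immediate from $W(\beta) \geq \e^{\beta X_\xi(s_1) - \psi(\beta) s_1} W_\infty^{(1)}(\beta) > 0$ $\Q_{2\beta}$-\as

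For $\beta \in [\sqrt{2/3}, \sqrt{2})$, I would apply the strong law of large numbers to the spine: $X_\xi(s_i)/s_i \to 2\beta$ $\Q_{2\beta}$-\as When $\beta > \sqrt{2/3}$, the exponent $\beta X_\xi(s_i) - \psi(\beta) s_i$ behaves like $(3\beta^2/2 - 1) s_i \to +\infty$, so each summand diverges and $W(\beta) = +\infty$. At the critical $\beta = \sqrt{2/3}$, $2\beta^2 = \psi(\beta)$ and the exponent reduces to $\beta B_{s_i}$ with $B_s := X_\xi(s) - 2\beta s$ a standard Brownian motion. The sequence $(B_{s_i})_{i \geq 1}$ is a centered random walk with finite-variance increments (distributed as $B_\tau$ for $\tau \sim \mathrm{Exp}(2)$), hence recurrent by Chung--Fuchs, so $\limsup_i B_{s_i} = +\infty$. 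A conditional Borel--Cantelli argument applied to the i.i.d.\@ sequence $(W_\infty^{(i)}(\beta))_i$ (independent of the spine) then shows that for every $\varepsilon > 0$, infinitely many $i$ satisfy $\e^{\beta B_{s_i}} W_\infty^{(i)}(\beta) \geq \varepsilon$, giving $W(\beta) = +\infty$ $\Q_{2\beta}$-\as

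For the $\Q_{2\beta}$-\as convergence of $W_t(\beta)$ to $W(\beta)$, Fatou's lemma applied term-by-term in the spinal decomposition immediately yields $\liminf_{t \to \infty} W_t(\beta) \geq W(\beta)$, which already closes the case $\beta \geq \sqrt{2/3}$ since then $W(\beta) = +\infty$. For $\beta \in [0, \sqrt{2/3})$, the spinal term $\e^{\beta X_\xi(t) - \psi(\beta) t}$ vanishes a.s.\@ by the LLN, and combining the subadditivity estimate from the second paragraph with the exponential $L^q$ bound of Lemma~\ref{lem:additive_martingale_convergence_in_Lp} yields $\E_{\Q_{2\beta}}[|W_t(\beta) - W(\beta)|^p] \to 0$ with exponential rate, hence $\Q_{2\beta}$-\as convergence along an integer subsequence by Borel--Cantelli. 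The main technical obstacle is the passage to the continuous index $t \to \infty$ in the regime $\beta \in [\sqrt{2}/2, \sqrt{2/3})$, where $\Q_{2\beta}$ and $\P$ are singular on $\cF_\infty$ and the classical $\P$-\as convergence of $W_t(\beta)$ cannot be transported directly; I expect to close this gap via a maximal-type control of $\sup_{t \in [n, n+1]} |W_t(\beta) - W_n(\beta)|$ derived from the spinal decomposition on each interval $[n, n+1]$, again summable in $n$.
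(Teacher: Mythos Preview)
Your approach is correct and shares the paper's overall architecture (spinal decomposition, Fatou for the lower bound on $\liminf W_t(\beta)$), but you make a genuinely different technical choice for the moment estimates. Where the paper works conditionally on $(\Pi, X_\xi)$ and uses the Von Bahr--Esseen inequality (Lemma~\ref{lem:von_Bahr}) with an exponent $p \in (1,2]$, you use subadditivity of $x \mapsto x^p$ with $p \in (0,1]$; this avoids the need to center the summands and makes the finiteness argument for $W(\beta)$ a one-line $p$-th moment computation, whereas the paper instead shows $\E_{\Q_{2\beta}}[W(\beta)\mid\Pi,X_\xi]<\infty$ almost surely. One small point: Lemma~\ref{lem:additive_martingale_convergence_in_Lp} is stated for $q>1$, so you need a Jensen step $\E|W_s-W_\infty|^p \leq (\E|W_s-W_\infty|^q)^{p/q}$ to feed it into your $p\leq 1$ framework. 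At $\beta=\sqrt{2/3}$, your Chung--Fuchs plus Borel--Cantelli argument is in fact more explicit than the paper's terse claim that the terms do not vanish.

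The part you flag as incomplete---the passage from integers to the continuum---is also the most laborious part of the paper's proof. The paper chooses to run the argument along the spinal branching times $(s_n)$ rather than integers: it shows $T_1(s_n)\to 0$ via conditional Von Bahr--Esseen and then controls $\sup_{t\in(s_n,s_{n+1})}|T_1(t)-T_1(s_n)|$ using Doob's maximal inequality applied to each $W^{(i)}$. Your proposed integer-interval maximal control works for the same reason and with essentially the same ingredients (Doob on each subtree martingale, $p$-th moment of the spine weights, Poisson control on the number of new branches in $[n,n+1]$), so the sketch is sound; it just needs to be written out.
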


\begin{proof}
    Let $\beta \in [0,\sqrt{2})$.
    Under $\Q_{2\beta}$,
    \begin{equation*}
        W(\beta) = \sum_{i \geq 1} \e^{\beta B_{s_i} + (3\beta^2/2 - 1) s_i} W_\infty^{(i)}(\beta),
    \end{equation*}
    where $B_s = X_\xi(s) - 2 \beta s$ is a standard Brownian motion.
    Note that $3\beta^2/2 - 1 \geq 0$ if and only if $\beta \geq \sqrt{2\smash{/}3}$.
    In particular, if $\beta \in [\sqrt{2\smash{/}3}, \sqrt{2})$, the (non-negative) terms of the series defining $W(\beta)$ do not converge to $0$ and then $W(\beta) = \infty$, $\Q_{2\beta}$-\as
    Besides, by Fatou's lemma,
    \begin{equation*}
        \liminf_{t \to \infty} W_t(\beta) \geq \sum_{i \geq 1} \liminf_{t \to \infty} \e^{\beta X_\xi(s_i) - \psi(\beta)s_i} W_{t - s_i}^{(i)}(\beta) \mathds{1}_{s_i \leq t},
    \end{equation*}
    and this lower bound is $\Q_{2\beta}$-\as equal to $W(\beta) = \infty$.

    Now let $\beta \in [0, \sqrt{2\smash{/}3})$.
    We have
    \begin{equation} \label{eq:W(beta)_finite}
        \E_{\Q_{2\beta}} \left[ W(\beta) \middle| \Pi, X_\xi \right] = \sum_{i \geq 1} \e^{\beta B_{s_i} - (1 - 3\beta^2/2) s_i},
    \end{equation}
    and, $\Q_{2\beta}$-\as, this series converges since
    \begin{equation*}
        \sqrt[n]{\e^{\beta B_{s_n} - (1 - 3\beta^2/2) s_n}} \xrightarrow[n \to \infty]{} \e^{-(1 - 3\beta^2/2)/2} < 1.
    \end{equation*}
    Thus, $W(\beta)$ is $\Q_{2\beta}$-\as finite.
    Furthermore, it is $\Q_{2\beta}$-\as positive since the terms of the series~\eqref{eq:definition_W} are $\Q_{2\beta}$-\as positive.
    
    It remains to control $W(\beta) - W_t(\beta)$.
    We rewrite it as
    \begin{align}
    T_1(t) + T_2(t) + T_3(t)
    & = \sum_{i \geq 1, s_i \leq t} \e^{\beta X_\xi(s_i) - \psi(\beta)s_i} (W_\infty^{(i)}(\beta) - W_{t - s_i}^{(i)}(\beta)) \nonumber \\
    & \qquad + \sum_{i \geq 1, s_i > t} \e^{\beta X_\xi(s_i) - \psi(\beta)s_i} W_\infty^{(i)}(\beta) \nonumber \\
    & \qquad - \e^{\beta X_\xi(t) - \psi(\beta)t}. \nonumber 
    \end{align}
    The term $T_3(t)$ converges $\Q_{2\beta}$-\as to $0$ since the exponent $\beta X_\xi(s) - \psi(\beta)s$ is a Brownian motion with drift $-1 + 3\beta^2/2 < 0$ (and variance $\beta^2$).
    The term $T_2(t)$ converges $\Q_{2\beta}$-\as to $0$ since it is the remainder of a series which converges $\Q_{2\beta}$-\as
    To control the term $T_1(t)$, we first show that it converges along the sequence of spinal branching times.
    By Lemma~\ref{lem:von_Bahr}, for any $p \in [1, 2]$ and $n \geq 1$,
    \begin{equation*} 
        \E_{\Q_{2\beta}} \left[ |T_1(s_n)|^p \middle | \Pi, X_\xi \right] \leq \sum_{i = 1}^n \e^{p \beta X_\xi(s_i) - p \psi(\beta) s_i} f(s_n - s_i),
    \end{equation*}
    where, for $s \geq 0$, $f(s) = \E|W_\infty(\beta) - W_s(\beta)|^p$.
    By Lemma~\ref{lem:additive_martingale_convergence_in_Lp}, we have $f(s) \leq C \e^{-(p - 1)(1 - p\beta^2/2)s}$. Hence, by~\eqref{eq:pc-cp}, the above quantity is bounded by
    \begin{equation*} 
        C \e^{-(p - 1)(1 - p\beta^2/2)s_n} \sum_{i \geq 1} \e^{p \beta X_\xi(s_i) - \psi(p \beta) s_i},
    \end{equation*}
    which is $\Q_{2\beta}$-\as summable in $n$ as soon as $p \in (1, 2/\beta^2)$ to ensure that the exponential prefactor vanishes, and $\beta^2 (2p-\frac{p^2}{2}) < 1$ to ensure that the sum is $\Q_{2\beta}$-\as finite by proceeding as in~\eqref{eq:W(beta)_finite} (such a choice of $p$ exists because $\beta < \sqrt{2\smash{/}3}$).
    By the Borel--Cantelli lemma, $T_1(s_n)$ converges $\Q_{2\beta}$-\as to $0$ as $n \to \infty$.
    Now let us control
    \begin{equation}\label{eq:T1_extension_to_continuum}
        \sup_{t \in (s_n, s_{n+1})} |T_1(t) - T_1(s_n)| \leq \sum_{i = 1}^n \e^{\beta X_\xi(s_i) - \psi(\beta)s_i} \underbrace{\sup_{t \in (s_n, s_{n+1})} \left| W_{t - s_i}^{(i)}(\beta) - W_{s_n - s_i}^{(i)}(\beta) \right|}_{= S_{n, i}}.
    \end{equation}
    Similarly to~\eqref{eq:extension_to_continuum}, we proceed by showing that
    \begin{equation}\label{eq:T1_extension_to_continuum_1}
        \sum_{i = 1}^n \e^{\beta X_\xi(s_i) - \psi(\beta)s_i} \E_{\Q_{2\beta}} \left[ S_{n, i} \middle| \Pi \right] \xrightarrow[n \to \infty]{} 0, \quad \Q_{2\beta}\text{-\as},
    \end{equation}
    and
    \begin{equation}\label{eq:T1_extension_to_continuum_2}
        \sum_{i = 1}^n \e^{\beta X_\xi(s_i) - \psi(\beta)s_i} \left( S_{n, i} - \E_{\Q_{2\beta}} \left[ S_{n, i} \middle| \Pi \right] \right) \xrightarrow[n \to \infty]{} 0, \quad \Q_{2\beta}\text{-\as}
    \end{equation}
    By applying the Cauchy-Schwarz inequality and Doob's maximal inequality, we obtain, for any $p \in (1,2]$,
    \begin{equation} \label{eq:bound_S_ni}
        \E_{\Q_{2\beta}} \left[ S_{n, i} \middle| \Pi \right] 
        \leq \E_{\Q_{2\beta}} \left[ (S_{n, i})^p \middle| \Pi \right]^{1/p}
        \leq \frac{p}{p-1} f(s_n - s_i, s_{n+1} - s_i)^{1/p},
    \end{equation}
    where, for $s \leq t$,
    \begin{equation*}
        f(s, t) = \E|W_t(\beta) - W_s(\beta)|^p \leq C \e^{-(p - 1)(1 - p\beta^2/2)s},
    \end{equation*}
    by Lemma~\ref{lem:additive_martingale_convergence_in_Lp}.
    In particular, since $p \in (1, 2/\beta^2)$, each term of the sum in~\eqref{eq:T1_extension_to_continuum_1} converges $\Q_{2\beta}$-\as to $0$ as $n \to \infty$.
    Then, by dominated convergence with domination
    \begin{equation*}
        \e^{\beta X_\xi(s_i) - \psi(\beta)s_i} \E_{\Q_{2\beta}} \left[ S_{n, i} \middle| \Pi \right] \leq C \e^{\beta X_\xi(s_i) - \psi(\beta)s_i}, 
    \end{equation*}
    we obtain~\eqref{eq:T1_extension_to_continuum_1}.
    As for the convergence~\eqref{eq:T1_extension_to_continuum_2}, it can be deduced from Lemma~\ref{lem:von_Bahr} and~\eqref{eq:bound_S_ni}, by proceeding as in the proof that $T_1(s_n)$ converges $\Q_{2\beta}$-\as to $0$ above.
    Finally, the bound in~\eqref{eq:T1_extension_to_continuum} converges $\Q_{2\beta}$-\as to $0$ as $n \to \infty$ and the term $T_1(t)$ converges $\Q_{2\beta}$-\as to $0$ as $t \to \infty$.
\end{proof}

\begin{remark} \label{rem:W(beta)_vs_Winfty(beta)}
    In the case $0 \leq \beta < \sqrt{2}/2$, a consequence of Lemma~\ref{lem:almost_sure_convergence_of_W_t} is that $W(\beta) = W_\infty(\beta)$ $\P$-\as and $\Q_{2\beta}$-\as since the measures $\P$ and $\Q_{2\beta}$ are equivalent.
    On the other hand, if $\beta \in [\sqrt{2}/2,\sqrt{2})$, then $\P$ and $\Q_{2\beta}$ are mutually singular so we can assume that $W_\infty(\beta) = W(\beta)$ $\Q_{2\beta}$-\as. 
    Moreover, one can check that $W(\beta) = W_\infty(\beta)$ $\P$-\as stays true for $\beta \in [\sqrt{2}/2,\sqrt{2})$ by following an argument similar to the proof of Lemma~\ref{lem:almost_sure_convergence_of_W_t} in the case $\beta \in (0,\sqrt{2\smash{/}3})$.
    This ensures that there is no notation conflict if we write $W_\infty(\beta)$ instead of $W(\beta)$ under $\P$ or $\Q_{2\beta}$, which we will do from now on.
\end{remark}

\subsection{Upper temperature}

\begin{proof}[Proof of Theorem~\ref{thm:asymptotics_in_mean}.\ref{it:asymptotics_in_mean_2}]
    Let us fix $\beta \in (0, \sqrt{2\smash{/}3})$. 
    Note that the claim concerning the convergence of $W_t(\beta)$ has already been proved in Lemma~\ref{lem:almost_sure_convergence_of_W_t} (see also Remark~\ref{rem:W(beta)_vs_Winfty(beta)}), so we focus on the convergence of the rescaled overlap distribution.
    We define
    \begin{equation}\label{eq:definition_F_t}
        F_t = \sum_{u \in \cN(at)} \e^{2\beta X_u(at) - \psi(2\beta)at} W_{t - at}^{(u, at)}(\beta)^2,
    \end{equation}
    so that $\e^{(1 - \beta^2)at} \nu_{\beta, t}([a, 1]) = F_t W_t(\beta)^{-2}$, by~\eqref{eq:overlap_rewriting}.
    We have
    \begin{align}
        \e^{(1 - \beta^2)at} \Expec{\nu_{\beta, t}([a, 1])} &= \E \left[ F_t W_t(\beta)^{-2} - F_t W_{at}(\beta)^{-2} \right] + \E \left[ F_t W_{at}(\beta)^{-2} \right] \nonumber \\
        &= \E \left[ F_t W_t(\beta)^{-2} - F_t W_{at}(\beta)^{-2} \right] + \E_{\Q_{2\beta}} \left[ W_{at}(\beta)^{-2} \right] \E \left[ W_{t-at}(\beta)^2 \right], \nonumber
    \end{align}
    using the branching property and the definition of $\Q_{2\beta}$ in the second term.
    Then, by Lemmas~\ref{lem:replacement_of_the_denominator} and~\ref{lem:convergence_in_mean_of_1/W_t^2} below, together with Lemma~\ref{lem:additive_martingale_convergence_in_Lp},
    \begin{equation*}
        \e^{(1 - \beta^2)at} \Expec{\nu_{\beta, t}([a, 1])} \xrightarrow[t \to \infty]{} \E_{\Q_{2\beta}} \left[ W_\infty^{-2} \right] \Expec{W_\infty(\beta)^2},
    \end{equation*}
    which is Theorem~\ref{thm:asymptotics_in_mean}.\ref{it:asymptotics_in_mean_2}.
\end{proof}

\begin{lemma}\label{lem:convergence_in_mean_of_1/W_t^2}
    For every $\beta \in (0, \sqrt{2\smash{/}3})$,
    \begin{equation*}
        \E_{\Q_{2\beta}}[W_t(\beta)^{-2}] \xrightarrow[t \to \infty]{} \E_{\Q_{2\beta}}[W_\infty(\beta)^{-2}] \in (0, \infty).
    \end{equation*}
\end{lemma}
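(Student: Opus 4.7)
The plan is to combine the $\Q_{2\beta}$-almost sure convergence $W_t(\beta)\to W_\infty(\beta)\in(0,\infty)$ from Lemma~\ref{lem:almost_sure_convergence_of_W_t} (which holds precisely for $\beta\in[0,\sqrt{2/3})$) with a uniform $L^q(\Q_{2\beta})$-bound on $W_t(\beta)^{-1}$ for some $q>2$. The former yields $W_t(\beta)^{-2}\to W_\infty(\beta)^{-2}$ $\Q_{2\beta}$-a.s.\ with a $\Q_{2\beta}$-a.s.\ positive and finite limit; the latter gives uniform integrability of $\{W_t(\beta)^{-2}\}_{t\geq 0}$ under $\Q_{2\beta}$, upgrading a.s.\ convergence to $L^1(\Q_{2\beta})$. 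Finiteness of the limit is then a consequence of Fatou and the uniform bound, while positivity follows from $W_\infty(\beta)<\infty$ $\Q_{2\beta}$-a.s.

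For the uniform moment bound I would exploit the spinal decomposition. Writing $a_i:=\e^{\beta X_\xi(s_i)-\psi(\beta)s_i}$ and $\cF_\xi:=\sigma(X_\xi,\Pi)$, we have under $\Q_{2\beta}$
\[
W_t(\beta) \geq \sum_{i:\,s_i\leq t} a_i\, W_{t-s_i}^{(i)}(\beta),
\]
where, conditionally on $\cF_\xi$, the $W_{t-s_i}^{(i)}(\beta)$ are independent, each with the $\P$-distribution of $W_{t-s_i}(\beta)$. Fix $b$ satisfying \eqref{eq:b_values} and $k\in\N^*$ with $kb>q$. For independent $V_i\sim W_{t-s_i}(\beta)$ under $\P$, Lemma~\ref{lem:left_tail_additive_martingale} gives $\P(V_1+\cdots+V_k\leq x)\leq K^k x^{kb}$ for $x\leq K^{-1/b}$, and integrating $s^{-q-1}\P(\cdot\leq s)$ yields a constant $C_{k,q}$ such that $\E_\P[(V_1+\cdots+V_k)^{-q}]\leq C_{k,q}$, uniformly in $t$ and in the shifts $(s_i)$. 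Setting $a_{(k)}:=\min_{i\leq k}a_i$, this gives on $\{s_k\leq t\}$
\[
\E_{\Q_{2\beta}}\bigl[W_t(\beta)^{-q}\mathds{1}_{s_k\leq t}\bigm|\cF_\xi\bigr] \leq C_{k,q}\, a_{(k)}^{-q}.
\]
On the complementary event $\{s_k>t\}$ I would use the crude bound $W_t(\beta)\geq\e^{\beta X_\xi(t)-\psi(\beta)t}$ together with the independence of $X_\xi$ and $\Pi$ under $\Q_{2\beta}$ to obtain, via a direct Gaussian computation and the Gamma$(k,2)$ tail for $\Q_{2\beta}(s_k>t)$,
\[
\E_{\Q_{2\beta}}\bigl[W_t(\beta)^{-q}\mathds{1}_{s_k>t}\bigr] \leq c_k\, t^{k-1}\, \e^{-[2-q-q\beta^2(q-3)/2]\,t}.
\]

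The same exponential rate $q+q\beta^2(q-3)/2$ also controls $\E_{\Q_{2\beta}}[a_i^{-q}]$: writing $X_\xi(s_i)=2\beta s_i+B_{s_i}$ with $B$ a Brownian motion independent of $\Pi$ and integrating out the Gaussian $B_{s_i}$ yields $\E_{\Q_{2\beta}}[a_i^{-q}\mid\Pi]=\exp\bigl([q+q\beta^2(q-3)/2]s_i\bigr)$, whose further integration against the Gamma$(i,2)$-law of $s_i$ is finite under exactly the same constraint, so $\E_{\Q_{2\beta}}[a_{(k)}^{-q}]\leq\sum_{i=1}^k\E_{\Q_{2\beta}}[a_i^{-q}]<\infty$. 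The main technical obstacle is therefore the joint compatibility of the two conditions on $q$ and $k$: $k$ must be large enough that $kb>q$ (to ensure integrability of $(V_1+\cdots+V_k)^{-q}$ through the $\P$-tail estimate), while the spinal Laplace rate must satisfy $q+q\beta^2(q-3)/2<2$ (for both the Gamma moment generating function and the $\{s_k>t\}$ term to be under control). At $q=2$ the rate equals $2-\beta^2<2$ for every $\beta>0$, so by continuity both conditions remain valid for some $q=2+\varepsilon>2$, which yields $\sup_{t\geq 0}\E_{\Q_{2\beta}}[W_t(\beta)^{-q}]<\infty$ and completes the uniform integrability step.
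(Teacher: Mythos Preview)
Your proposal is correct and follows essentially the same strategy as the paper: combine the $\Q_{2\beta}$-a.s.\ convergence from Lemma~\ref{lem:almost_sure_convergence_of_W_t} with a uniform $L^q(\Q_{2\beta})$-bound on $W_t(\beta)^{-1}$ for some $q>2$ (this is the paper's Lemma~\ref{lem:bounded_in_Lp}), yielding uniform integrability and hence $L^1$-convergence. Your derivation of the moment bound also mirrors the paper's---spinal decomposition, splitting on $\{s_k\leq t\}$ versus $\{s_k>t\}$, and the same critical condition $q+q\beta^2(q-3)/2<2$ on the Gamma Laplace transform---the only difference being that you use the cruder inequality $a_{(k)}^{-q}\leq\sum_{i\leq k}a_i^{-q}$ in place of the paper's sharper ordered-coefficient bound (Lemma~\ref{lem:negative_moments_linear_combination}), which suffices here since the sharper version is only needed elsewhere in the paper.
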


\begin{remark} \label{rem:mean_of_ratios}
    The convergence in this lemma stays true for any $\beta \in [0, \sqrt{2})$, but for $\beta \notin (0, \sqrt{2\smash{/}3})$ the limit is not in $(0,\infty)$ anymore:
    \begin{itemize}
        \item If $\beta = 0$, the fixed point equation~\eqref{eq:rewriting_additive_martingale} with $t=\infty$ and $s=1$, together with \cite[Theorem~2.4]{Liu2001} implies that $\E_{\Q_0}[W_\infty(0)^{-2}] = \E [W_\infty(0)^{-1}] = \infty$.
        Then, by Fatou's lemma, $\E_{\Q_0}[W_t(0)^{-2}] \to \infty$ as $t \to \infty$.
        \item If $\beta \in [\sqrt{2\smash{/}3}, \sqrt{2})$, Lemma~\ref{lem:almost_sure_convergence_of_W_t} states that $W_t(\beta) \to \infty$, $\Q_{2\beta}$-\as, and Lemma~\ref{lem:bounded_in_Lp} below implies that $\E_{\Q_{2\beta}}[W_t(\beta)^{-2}] \to 0$ as $t \to \infty$.
    \end{itemize}
\end{remark}

\begin{lemma}\label{lem:replacement_of_the_denominator}
    Let $\beta \in (0, \sqrt{2\smash{/}3})$ and define $F_t$ as in~\eqref{eq:definition_F_t}.
    Then,
    \begin{equation*}
        \E \left[ F_t W_t(\beta)^{-2} - F_t W_{at}(\beta)^{-2}  \right] \xrightarrow[t \to \infty]{} 0.
    \end{equation*}
\end{lemma}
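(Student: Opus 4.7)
The plan is to use the change of measure $\Q_{2\beta, at}$ defined in \eqref{eq:definition_of_Q_bt} to transform $F_t$ into a single spinal term. Summing over $u \in \cN(at)$ in the definition \eqref{eq:definition_F_t} and using that $\P(\xi(at) = u \mid \cF_\infty) = 2^{-|u|} \mathds{1}_{u \in \cN(at)}$ (as in the proof of Lemma~\ref{lem:gibbs_measure_weights}), one obtains, for any nonnegative $\cF_\infty$-measurable $h$,
\begin{equation*}
    \E\bigl[F_t \cdot h\bigr] = \E_{\Q_{2\beta, at}}\bigl[M_t^2 \cdot h\bigr], \qquad M_t := W_{t-at}^{(\xi(at), at)}(\beta).
\end{equation*}
Applied separately to the positive and negative parts of $W_t(\beta)^{-2} - W_{at}(\beta)^{-2}$, this reduces the lemma to showing that $\E_{\Q_{2\beta, at}}[M_t^2 W_t(\beta)^{-2}]$ and $\E_{\Q_{2\beta, at}}[M_t^2 W_{at}(\beta)^{-2}]$ have the same limit as $t \to \infty$.

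The second expectation is immediate: under $\Q_{2\beta, at}$, the sub-BBM rooted at $\xi(at)$ after time $at$ is a standard BBM independent of $\tilde{\cF}_{at}$, so $M_t$ is independent of $W_{at}(\beta)$, and since $\Q_{2\beta, at}|_{\cF_{at}} = \Q_{2\beta}|_{\cF_{at}}$,
\begin{equation*}
    \E_{\Q_{2\beta, at}}\bigl[M_t^2 W_{at}(\beta)^{-2}\bigr] = \E\bigl[W_{t-at}(\beta)^2\bigr] \cdot \E_{\Q_{2\beta}}\bigl[W_{at}(\beta)^{-2}\bigr] \xrightarrow[t \to \infty]{} \E\bigl[W_\infty(\beta)^2\bigr] \cdot \E_{\Q_{2\beta}}\bigl[W_\infty(\beta)^{-2}\bigr],
\end{equation*}
by Lemma~\ref{lem:additive_martingale_convergence_in_Lp} (with $p=2$, valid since $\beta^2 < 1$) and Lemma~\ref{lem:convergence_in_mean_of_1/W_t^2}.

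For the first expectation, decompose $W_t(\beta) = Y_t + c_{at} M_t$, where $c_{at} := \e^{\beta X_\xi(at) - \psi(\beta) at}$ and $Y_t := \sum_{u \in \cN(at),\, u \neq \xi(at)} \e^{\beta X_u(at) - \psi(\beta) at} W_{t-at}^{(u, at)}(\beta)$ aggregates the contributions of the non-spine particles alive at time $at$. Under $\Q_{2\beta, at}$, $Y_t$ and $M_t$ are independent. The crucial point is that $c_{at}$ is exponentially small: under $\Q_{2\beta}$, $X_\xi(\cdot) - 2\beta\,\cdot$ is a standard Brownian motion, so $c_{at} = \exp\bigl((3\beta^2/2 - 1)at + O(\sqrt{at})\bigr)$ with negative exponent since $\beta < \sqrt{2/3}$. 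Hence $W_t(\beta)^{-2}$ can be replaced by $Y_t^{-2}$ up to a negligible error, and by independence
\begin{equation*}
    \E_{\Q_{2\beta, at}}\bigl[M_t^2 Y_t^{-2}\bigr] = \E\bigl[W_{t-at}(\beta)^2\bigr] \cdot \E_{\Q_{2\beta, at}}\bigl[Y_t^{-2}\bigr].
\end{equation*}
Since $Y_t = W_t(\beta) - c_{at} M_t \to W_\infty(\beta)$ $\Q_{2\beta, at}$-a.s., an argument analogous to Lemma~\ref{lem:convergence_in_mean_of_1/W_t^2} yields $\E_{\Q_{2\beta, at}}[Y_t^{-2}] \to \E_{\Q_{2\beta}}[W_\infty(\beta)^{-2}]$, so the first expectation has the same limit as the second.

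The main obstacle is to justify the replacement $W_t(\beta)^{-2} \approx Y_t^{-2}$ at the expectation level, which requires uniform integrability of $M_t^2 W_t(\beta)^{-2}$ and $M_t^2 Y_t^{-2}$ under $\Q_{2\beta, at}$. This is obtained by combining the left-tail estimate of Lemma~\ref{lem:left_tail_additive_martingale} (applied to the non-spine sub-BBMs composing $Y_t$, to control its negative moments uniformly in $t$) with the $L^2$-bound of Lemma~\ref{lem:additive_martingale_convergence_in_Lp} for $M_t$, exploiting the independence structure provided by the change of measure.
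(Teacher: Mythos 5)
Your proposal correctly identifies the main tools (change of measure to $\Q_{2\beta, at}$, spinal decomposition of $W_t(\beta)$, exponential smallness of the spine's weight $c_{at}$ because $\beta < \sqrt{2/3}$), which is the same backbone as the paper's proof. However, there is a genuine gap in the step where you want to replace $W_t(\beta)^{-2}$ by $Y_t^{-2}$ and then compute $\E_{\Q_{2\beta,at}}[M_t^2 Y_t^{-2}] = \E[W_{t-at}(\beta)^2]\,\E_{\Q_{2\beta,at}}[Y_t^{-2}]$.

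The problem is that $Y_t$ vanishes on the event $\{s_1 > at\}$ that the spine does not branch before time $at$, which has $\Q_{2\beta,at}$-probability $\e^{-2at} > 0$. On this event $Y_t^{-2} = +\infty$, so $\E_{\Q_{2\beta,at}}[Y_t^{-2}] = +\infty$ for \emph{every} $t$. Your claimed convergence $\E_{\Q_{2\beta,at}}[Y_t^{-2}] \to \E_{\Q_{2\beta}}[W_\infty(\beta)^{-2}] < \infty$ is therefore false, and it is not a uniform-integrability issue that can be repaired by controlling tails — the quantity is infinite to begin with. Relatedly, the a.s. convergence you invoke (``$Y_t \to W_\infty(\beta)$ $\Q_{2\beta,at}$-a.s.'') is not well-posed, since the measure changes with $t$; the paper consistently passes to expectations under the fixed measure $\Q_{2\beta}$ before doing any convergence analysis. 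Also, Lemma~\ref{lem:left_tail_additive_martingale} applied ``to the non-spine sub-BBMs composing $Y_t$'' gives left-tail control of each individual $W^{(u,at)}_{t-at}(\beta)$, but it cannot by itself bound negative moments of $Y_t$: one needs a guaranteed number of terms in the sum, which is exactly why the paper first restricts to the event $\bar{A}$ (at least $n$ spinal branchings before $a't$), controls the complementary event separately, and then uses Lemma~\ref{lem:bound_for_negative_moments}. Moreover, the paper never drops the spine's contribution from the denominator; it writes the discrepancy as $W_{t - at}^{(\xi, at)}(\beta)^2 (W_t(\beta) - W_{at}(\beta))(W_t(\beta)^{-1} W_{at}(\beta)^{-2} + W_t(\beta)^{-2} W_{at}(\beta)^{-1})$, decomposes $W_t(\beta) - W_{at}(\beta)$ along the spine into $T_1, T_2, T_3$, and keeps the full (a.s. positive) martingales $W_t(\beta)$, $W_{at}(\beta)$ in the denominators throughout.
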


The remainder of this section is dedicated to the proofs of Lemma~\ref{lem:convergence_in_mean_of_1/W_t^2} and Lemma~\ref{lem:replacement_of_the_denominator}

\begin{lemma}\label{lem:negative_moments_linear_combination}
    Let $X_i$ for $i \geq 1$ be independent random variables and $a_i$, $i \geq 1$, be positive coefficients.
    Assume that there exist $b, K > 0$ such that, for each $i \geq 1$ and every $x > 0$,
    \begin{equation*}
        \P(X_i \leq x) \leq K x^b.
    \end{equation*}
    Then, for every $p \in [1, \infty)$ such that $p/b \notin \Z$, there exists $C = C(b,K,p) >0$ such that, letting $n = \lceil p/b \rceil$,
    \begin{equation*}
        \Expec{\left( \sum_{i = 1}^n a_i X_i \right)^{-p}} \leq C a_1^{-b} \cdots a_{n - 1}^{-b} a_n^{-p + b(n - 1)}.
    \end{equation*}
\end{lemma}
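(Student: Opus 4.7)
The plan is to use the Laplace-type representation
\[
    x^{-p} = \frac{1}{\Gamma(p)} \int_0^\infty t^{p-1} \e^{-tx} \d{t}, \qquad x > 0,
\]
combined with the independence of the $X_i$'s, to write
\[
    \Expec{\left(\sum_{i=1}^n a_i X_i \right)^{-p}} = \frac{1}{\Gamma(p)} \int_0^\infty t^{p-1} \prod_{i=1}^n \varphi_i(t a_i) \d{t},
\]
where $\varphi_i(s) := \Expec{\e^{-s X_i}}$. The tail bound $\Prob{X_i \leq y} \leq K y^b$ forces $X_i > 0$ almost surely, so the left-hand side is well defined. Writing $\varphi_i(s) = \int_0^\infty s \e^{-s y} \Prob{X_i \leq y} \d{y}$, plugging in the tail bound and performing the change of variables $u = sy$ yields $\varphi_i(s) \leq K \Gamma(b+1) s^{-b}$ for every $s > 0$; combined with the trivial estimate $\varphi_i(s) \leq 1$, this gives $\varphi_i(s) \leq C_0 \min(1, s^{-b})$ for some constant $C_0 = C_0(b,K)$.

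Exploiting the symmetry of the left-hand side in the pairs $(a_i, X_i)$, I would relabel so that $a_1 \geq a_2 \geq \cdots \geq a_n$. With this ordering, $\min(1,(ta_i)^{-b}) = (ta_i)^{-b}$ exactly when $t \geq 1/a_i$, so the integral decomposes according to how many thresholds $1/a_i$ have been crossed:
\[
    \int_0^\infty t^{p-1} \prod_{i=1}^n \min(1,(ta_i)^{-b}) \d{t} = \sum_{k=0}^n \prod_{i=1}^k a_i^{-b} \int_{1/a_k}^{1/a_{k+1}} t^{p-1-kb} \d{t},
\]
with the conventions $1/a_0 = 0$ and $1/a_{n+1} = +\infty$. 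Since $n = \lceil p/b \rceil$ and $p/b \notin \Z$, we have $(n-1)b < p < nb$, so every piece is finite (the hypothesis $p/b \notin \Z$ avoids the logarithmic degeneracy $p = kb$). A direct calculation shows that the $k=n$ piece equals $(nb-p)^{-1} \prod_{i=1}^{n-1} a_i^{-b} \cdot a_n^{-p+(n-1)b}$, which already matches the target bound.

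The step I expect to be the most delicate is showing that every other piece is dominated by a constant times this same target. Explicit integration of the $k$-th piece for $0 \leq k \leq n-1$ gives the upper bound $(p-kb)^{-1} \prod_{i \leq k} a_i^{-b} \cdot a_{k+1}^{-p + kb}$, and the crux is the algebraic inequality
\[
    a_{k+1}^{-p+kb} = a_{k+1}^{-b(n-1-k)} \cdot a_{k+1}^{-p+(n-1)b} \leq \prod_{i=k+1}^{n-1} a_i^{-b} \cdot a_n^{-p+(n-1)b},
\]
which follows from $a_i \leq a_{k+1}$ for $i \geq k+1$ (so $a_i^{-b} \geq a_{k+1}^{-b}$, giving $\prod_{i=k+1}^{n-1} a_i^{-b} \geq a_{k+1}^{-b(n-1-k)}$) together with the fact that the exponent $-p+(n-1)b$ is negative and $a_n \leq a_{k+1}$ (so $a_n^{-p+(n-1)b} \geq a_{k+1}^{-p+(n-1)b}$). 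Summing the $n+1$ pieces then yields the claim with a constant $C = C(b,K,p)$ which blows up as $p/b$ approaches an integer, consistent with the hypothesis $p/b \notin \Z$.
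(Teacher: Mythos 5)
Your proof is correct, but it takes a genuinely different route from the paper's. You use the gamma-function identity $x^{-p} = \Gamma(p)^{-1}\int_0^\infty t^{p-1}\e^{-tx}\,\d{t}$ and exploit independence to factorize the expectation into a product of Laplace transforms $\varphi_i$, then bound each by $C_0\min(1,s^{-b})$. The paper instead writes $\E[Y^{-p}] = \int_0^\infty \P(Y \leq x^{-1/p})\,\d{x}$ and bounds the distribution function of the sum via $\P(\sum_i a_i X_i \leq x) \leq \prod_{i\leq j}\P(a_i X_i \leq x) \leq K^j a_1^{-b}\cdots a_j^{-b}x^{jb}$, using nonnegativity and then independence. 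Both arguments hinge on the same arithmetic fact $b(n-1) < p < bn$, and both integrate a piecewise power bound, you over $n+1$ regimes delimited by the thresholds $1/a_k$, the paper over three regimes ($x\geq a_1$, $a_n\leq x<a_1$, $x<a_n$). Your Laplace-transform factorization is perhaps slightly cleaner since each $\varphi_i$ is bounded separately, whereas the paper must bound a joint probability; on the other hand, the paper's three-piece split keeps the bookkeeping a bit shorter. One small thing you elide: after relabeling so that $a_1 \geq \cdots \geq a_n$, the bound you prove has the special exponent $-p+b(n-1)$ attached to the smallest $a_i$, and you should verify that this dominates the target bound for the original (arbitrary) labeling. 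The paper handles this with the one-line inequality $a_i^{-b}a_n^{-p+b(n-1)} \leq a_n^{-b}a_i^{-p+b(n-1)}$, which follows because $-b < -p+b(n-1) < 0$; your argument needs the same observation.
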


\begin{proof}
    Note that $b(n - 1) < p < bn$ because $p/b \notin \Z$.
    We can assume that $a_1 \geq a_2 \geq \cdots \geq a_n$ since the general case follows from
    \begin{equation*}
        a_i^{-b} a_n^{-p + b(n - 1)} \leq a_n^{-b} a_i^{-p + b(n - 1)}.
    \end{equation*}
    For each $j = 1, \ldots, n$,
    \begin{equation*}
        \P \left( \sum_{i = 1}^n a_i X_i \leq x \right) \leq \P(a_1 X_1 \leq x) \cdots \P(a_j X_j \leq x) \leq K^j a_1^{-b} \cdots a_j^{-b} x^{bj}.
    \end{equation*}
    In particular, we can use the following bound
    \begin{equation*}
        \P \left( \sum_{i = 1}^n a_i X_i \leq x \right) \leq \begin{cases}
            1 & \text{if } x \geq a_1, \\
            K^{n - 1} a_1^{-b} \cdots a_{n-1}^{-b} x^{b(n-1)} & \text{if } a_n \leq x < a_1, \\
            K^n a_1^{-b} \cdots a_n^{-b} x^{bn} & \text{if } x < a_n.
        \end{cases}
    \end{equation*}
    This yields
    \begin{align}
        \Expec{\left( \sum_{i = 1}^n a_i X_i \right)^{-p}} &= \int_0^\infty \P \left( \sum_{i = 1}^n a_i X_i \leq x^{-1/p} \right) \d{x} \nonumber \\
        &\leq \int_0^{a_1^{-p}} \d{x} + \int_{a_1^{-p}}^{a_n^{-p}} K^{n - 1} x^{-b(n-1)/p} a_1^{-b} \cdots a_{n-1}^{-b} \d{x} + \int_{a_n^{-p}}^\infty K^n x^{-bn/p} a_1^{-b} \cdots a_n^{-b} \d{x} \nonumber \\
        &\leq a_1^{-p} + C a_1^{-b} \cdots a_{n - 1}^{-b} a_n^{-p + b(n-1)}, \nonumber
    \end{align}
    using $b(n-1)/p<1$ in the second integral and $bn/p >1$ in the third one.
    To conclude, it suffices to note that $a_1^{-p} \leq a_1^{-b} \ldots a_{n-1}^{-b} a_n^{-p + b(n-1)}$.
\end{proof}

\begin{lemma}\label{lem:bound_for_negative_moments}
    Let $\beta \in [0, \sqrt{2})$ and $p \in [1, \infty)$.
    There exist $n_0\geq 1$ and $C > 0$ such that, for any $n \geq n_0$, any $I = \{i_1, \ldots, i_n\} \subset \{1, 2, \ldots\}$ with $i_1 < \dots < i_n$ and any $t \in [0, \infty]$, on the event where $s_{i_n} \leq t$, we have
    \begin{equation} \label{eq:bound_for_negative_moments}
        \E_{\Q_{2\beta}} \left[ \left( \sum_{i \in I} \e^{\beta X_\xi(s_i) - \psi(\beta)s_i} W_{t - s_i}^{(i)}(\beta) \right)^{-p} \middle| \Pi \right] \leq C \e^{(p^2\beta^2/2 + p(1 - 3\beta^2/2))s_{i_n}}.
    \end{equation}
\end{lemma}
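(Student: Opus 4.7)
The plan is, after conditioning on the spinal branching times $\Pi$ and on the spine trajectory $X_\xi$ under $\Q_{2\beta}$, to apply Lemma~\ref{lem:negative_moments_linear_combination} to the independent variables $(W_{t-s_i}^{(i)}(\beta))_{i \in I}$, and then to integrate out the Brownian spine explicitly.

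Concretely, fix $b > 0$ satisfying \eqref{eq:b_values} with $p/b \notin \Z$, and set $n_0 = \lceil p/b \rceil$. Under $\Q_{2\beta}$ conditionally on $(\Pi, X_\xi)$, the weights $a_i := \e^{\beta X_\xi(s_i) - \psi(\beta) s_i}$ are deterministic and the variables $W_{t-s_i}^{(i)}(\beta)$ are independent with left tail $\Prob{W \leq x} \leq K x^b$ uniformly in $t - s_i \in [0,\infty]$ by Lemma~\ref{lem:left_tail_additive_martingale}. Dropping all but the last $n_0$ summands (which only enlarges the negative moment, all summands being nonnegative) and applying Lemma~\ref{lem:negative_moments_linear_combination} conditionally on $(\Pi, X_\xi)$ gives
\[
\E\!\left[\Bigl(\sum_{i\in I} a_i W_{t-s_i}^{(i)}(\beta)\Bigr)^{-p} \,\Big|\, \Pi, X_\xi\right]
\;\leq\; C_1 \prod_{j=n-n_0+1}^{n-1} a_{i_j}^{-b}\cdot a_{i_n}^{-p+b(n_0-1)}.
\]

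Taking $\E_{\Q_{2\beta}}[\,\cdot \mid \Pi]$ and using that $X_\xi$ is a Brownian motion with drift $2\beta$ under $\Q_{2\beta}$, we decompose $X_\xi$ as a sum of independent Gaussian increments over $0 = s_{i_{n-n_0}} < s_{i_{n-n_0+1}} < \ldots < s_{i_n}$ and compute directly
\[
\E_{\Q_{2\beta}}\!\left[\prod_{j=n-n_0+1}^{n-1} a_{i_j}^{-b}\cdot a_{i_n}^{-p+b(n_0-1)} \,\Big|\, \Pi\right]
\;=\; \exp\!\left(\sum_{k=1}^{n_0} \Delta t_k\, f(u_k)\right),
\]
where $\Delta t_k$ is the $k$-th time increment in the decomposition, $u_k = p - (k-1)b$, and $f(u) = \tfrac{\beta^2}{2} u^2 + u\bigl(1-\tfrac{3\beta^2}{2}\bigr)$.

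The target exponent in the statement equals $f(p) \cdot s_{i_n} = f(p) \sum_k \Delta t_k$, so it suffices to verify $f(u_k) \leq f(p)$ for every $k \in \{1, \ldots, n_0\}$. For $\beta \leq \sqrt{2/3}$ the quadratic $f$ is non-decreasing on $[0,p]$ and this is immediate from $u_k \in (0,p]$. The delicate case is $\beta > \sqrt{2/3}$, in which $f$ has an interior minimizer $u^* = \tfrac{3}{2}-\tfrac{1}{\beta^2} \in (0,1)$; the inequality then holds provided each $u_k$ lies in the reflected interval $[\max(0, 2u^*-p), p]$ where $f \leq f(p)$, which can be arranged by choosing $b$ sufficiently small within \eqref{eq:b_values}. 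The main obstacle is the remaining regime $p \leq 2u^*$ (i.e.\ small $p$ with $\beta$ close to $\sqrt{2}$), where no admissible choice of $b$ places all the $u_k$ in this interval. There one must argue separately by reducing to $n_0 = 1$, using the trivial inequality $(\sum_i a_i W_{t-s_i}^{(i)})^{-p} \leq (a_{i_n} W_{t-s_{i_n}}^{(i_n)})^{-p}$ together with the finiteness of $\Expec{W_\infty(\beta)^{-p}}$ (a direct consequence of Lemma~\ref{lem:left_tail_additive_martingale} in this parameter range), which produces the bound $C \E_{\Q_{2\beta}}[a_{i_n}^{-p} \mid \Pi] = C \e^{f(p) s_{i_n}}$ as required.
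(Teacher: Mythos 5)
Your proof follows the same route as the paper: keep the last $n_0$ terms, apply Lemma~\ref{lem:negative_moments_linear_combination} conditionally on $(\Pi, X_\xi)$ using the left tail of Lemma~\ref{lem:left_tail_additive_martingale}, then integrate the Gaussian spine increments, reducing the problem to checking $f(u_k)\le f(p)$ for $f(x)=\tfrac{\beta^2}{2}x^2 + x(1-\tfrac{3\beta^2}{2})$ and $u_k\in(0,p]$. You have correctly spotted a genuine subtlety that the paper glosses over: since the vertex of $f$ is at $u^*=\tfrac32-\tfrac1{\beta^2}$, the maximum of $f$ on $[0,p]$ is at $x=p$ only when $p\ge 2u^*=3-\tfrac2{\beta^2}$; for $\beta\in(1,\sqrt2)$ and $p\in[1,3-\tfrac2{\beta^2})$ this fails, whereas the paper asserts without qualification that the maximum is at $p$ whenever $\beta<\sqrt2$ and $p\ge1$. (In the paper this gap is harmless because the lemma is only invoked with $p\ge 2$, where $p\ge 2>3-\tfrac2{\beta^2}$ always.)

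However, your proposed repair of this regime is incorrect. You reduce to $n_0=1$ and invoke the finiteness of $\E[W_\infty(\beta)^{-p}]$ "in this parameter range" via Lemma~\ref{lem:left_tail_additive_martingale}. But that lemma gives $\P(W_\infty(\beta)\le x)\le Kx^b$ only for $b<b_{\max}=(\sqrt{\psi(\beta)^2+2\beta^2}-\psi(\beta))/\beta^2$, and a short computation shows $b_{\max}<1$ for every $\beta>0$ (indeed $b_{\max}\ge1$ would require $2\ge 2+2\beta^2$). Since $p\ge1>b_{\max}$, one has $\E[W_\infty(\beta)^{-p}]=\infty$, so the single-term bound $\E_{\Q_{2\beta}}[(a_{i_n}W^{(i_n)}_{t-s_{i_n}})^{-p}\mid\Pi]$ diverges and cannot produce the claimed $C\,\e^{f(p)s_{i_n}}$. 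Relatedly, your remark that the constraint $u_k\in[\max(0,2u^*-p),p]$ "can be arranged by choosing $b$ sufficiently small" is backwards: shrinking $b$ pushes the smallest $u_k=p-(n_0-1)b$ toward $0$, which is exactly what you want to avoid; when $p>2u^*$ every admissible $b$ works and no tuning is needed, and when $p\le 2u^*$ no choice of $b<b_{\max}$ can place all $u_k$ in the required interval because $b_{\max}<2u^*-p$ when $\beta$ is close to $\sqrt2$. The problematic regime therefore remains unresolved in your argument; a correct treatment would either restrict the statement to $p\ge 2$ (which suffices for all uses in the paper), or supply a genuinely different bound for $p\in[1,3-2/\beta^2)$.
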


\begin{proof}
    Let $b_{\max} = (\sqrt{\psi(\beta)^2 + 2\beta^2} - \psi(\beta))/\beta^2$ (recall it appears in the condition~\eqref{eq:b_values} of Lemma~\ref{lem:left_tail_additive_martingale}).
    We choose $n_0 > p/b_{\max}$.
    Now note that it is enough to prove~\eqref{eq:bound_for_negative_moments} for $n=n_0$: indeed, adding elements to $I$ can only decrease the left-hand side of~\eqref{eq:bound_for_negative_moments} and increase the right-hand side (note that $p^2\beta^2/2 + p(1 - 3\beta^2/2) \geq 0$).    
    So let $n = n_0$.
    Since $n > p/b_{\max}$, there exists $b \in (0,b_{\max})$ such that $n = \lceil p/b \rceil$ and $p/b \notin \Z$.
    Then, by Lemma~\ref{lem:negative_moments_linear_combination} and Lemma~\ref{lem:left_tail_additive_martingale},
    \begin{align}
        &\E_{\Q_{2\beta}} \left[ \left( \sum_{i \in I} \e^{\beta X_\xi(s_i) - \psi(\beta)s_i} W_{t - s_i}^{(i)}(\beta) \right)^{-p} \middle| \Pi, X_\xi \right] \nonumber \\
        &\quad \leq C \left( \prod_{i \in I \setminus \{i_n\}} \e^{\beta X_\xi(s_i) - \psi(\beta)s_i} \right)^{-b} \left( \e^{\beta X_\xi(s_{i_n}) - \psi(\beta)s_{i_n}} \right)^{-p + b(n-1)} \nonumber \\
        &\quad = C \prod_{j = 1}^{i_n} \left( \e^{\beta (X_\xi(s_j) - X_\xi(s_{j-1})) - \psi(\beta)(s_j - s_{j-1})} \right)^{-\alpha_j}, \nonumber
    \end{align}
    with $\alpha_j = p - b \#(I \cap \{1, \ldots, j-1\})$.
    Recall that, under $\Q_{2\beta}$, the spine trajectory $X_\xi$ is a Brownian motion with drift $2\beta$, independent of $\Pi$.
    Then,
    \begin{align}
        \E_{\Q_{2\beta}} \left[ \left( \sum_{i \in I} \e^{\beta X_\xi(s_i) - \psi(\beta)s_i} W_{t - s_i}^{(i)}(\beta) \right)^{-p} \middle| \Pi \right] &\leq C \prod_{j = 1}^{i_n} \e^{(\alpha_j^2 \beta^2/2 + \alpha_j (1 - 3\beta^2/2))(s_j - s_{j - 1})} \nonumber \\
        &\leq C \prod_{j = 1}^{i_n} \e^{(p^2 \beta^2/2 + p (1 - 3\beta^2/2))(s_j - s_{j - 1})}, \nonumber
    \end{align}
    using in the last inequality that $\alpha_j \in [0,p]$ and that the function $x \in [0,p] \mapsto x^2 \beta^2/2 + x (1 - 3\beta^2/2)$ reaches its maximum at $x=p$, because $\beta <\sqrt{2}$ and $p\geq 1$.
    This gives the desired inequality by telescoping.
\end{proof}

\begin{lemma}\label{lem:bounded_in_Lp}
    For every $\beta \in (0, \sqrt{2})$, the process $(W_t(\beta)^{-2})_{t \geq 0}$ is bounded in $L^p(\Q_{2\beta})$ for some $p > 1$.
\end{lemma}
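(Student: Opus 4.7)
The plan is to prove the stronger statement $\sup_{t\geq 0}\E_{\Q_{2\beta}}[W_t(\beta)^{-q}]<\infty$ for some $q>2$, which immediately yields the desired $L^{q/2}(\Q_{2\beta})$-boundedness of $W_t(\beta)^{-2}$ with $q/2>1$. The idea is to split the expectation according to whether the spine has already branched at least $n_0$ times by time $t$, where $n_0$ is the threshold produced by Lemma~\ref{lem:bound_for_negative_moments} at exponent $q$. On $\{s_{n_0}\leq t\}$ I will invoke that lemma, while on the complementary event I will fall back on the trivial lower bound $W_t(\beta)\geq \e^{\beta X_\xi(t)-\psi(\beta)t}$ coming from the very last term of the spinal decomposition.

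Set $\lambda(q):=\tfrac{1}{2}q^2\beta^2+q(1-\tfrac{3}{2}\beta^2)$. Applying Lemma~\ref{lem:bound_for_negative_moments} with $I=\{1,\ldots,n_0\}$ yields a conditional bound of the form $C\,\e^{\lambda(q)\, s_{n_0}}$ on $\{s_{n_0}\leq t\}$. Since under $\Q_{2\beta}$ the time $s_{n_0}$ is a sum of $n_0$ i.i.d.\@ exponentials of rate $2$, its Laplace transform at $\lambda(q)$ is finite iff $\lambda(q)<2$. Now $\lambda(2)=2-\beta^2<2$ for any $\beta\in(0,\sqrt{2})$, so by continuity one can fix $q>2$ close enough to $2$ for this strict inequality to persist. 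This gives a bound on the first piece uniform in $t$.

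On the event $\{s_{n_0}>t\}$, I would use the independence of $X_\xi$ and $\Pi$ under $\Q_{2\beta}$: since $X_\xi(t)=B_t+2\beta t$ where $B$ is a standard Brownian motion, one has $\E_{\Q_{2\beta}}[\e^{-q\beta X_\xi(t)}]=\e^{(q^2\beta^2/2-2q\beta^2)\,t}$, while the Poisson tail gives $\Q_{2\beta}(s_{n_0}>t)\leq C(1+t^{n_0-1})\e^{-2t}$. Combining these with the prefactor $\e^{q\psi(\beta)t}$ arising from $W_t(\beta)^{-q}\leq \e^{-q\beta X_\xi(t)+q\psi(\beta)t}$ produces an upper bound of order $(1+t^{n_0-1})\e^{(\lambda(q)-2)\,t}$, which is uniformly bounded in $t$ (and actually vanishes as $t\to\infty$) under the very same condition $\lambda(q)<2$.

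The only delicate point would be to choose $q>2$ making both contributions simultaneously controlled; however the algebra shows the remarkable fact that both pieces are governed by the single inequality $\lambda(q)<2$, which is strict at $q=2$ as soon as $\beta>0$. A single continuity argument therefore suffices, and the argument works uniformly over the full range $\beta\in(0,\sqrt{2})$; note that it breaks down exactly at $\beta=0$, consistent with the hypothesis of the lemma.
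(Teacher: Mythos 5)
Your proof is correct and follows essentially the same approach as the paper: split on the event $\{s_{n_0}\leq t\}$, bound the first piece via Lemma~\ref{lem:bound_for_negative_moments} together with the $\Gamma(n_0,2)$ law of $s_{n_0}$ under $\Q_{2\beta}$, and bound the complement by keeping only the spine's contribution $\e^{\beta X_\xi(t)-\psi(\beta)t}$ in $W_t(\beta)$; your exponent $q$ is the paper's $2p$. Your observation that both pieces are governed by the single inequality $\lambda(q)<2$ is in fact a slight clean-up of the paper's exposition, whose stated intermediate condition $2-3\beta^2+2p\beta^2<0$ for the second piece is unattainable when $\beta<\sqrt{2}$ --- what is actually needed there (after absorbing the Poisson factor $\e^{-2t}$) is $p(2-3\beta^2+2p\beta^2)<2$, which coincides exactly with your $\lambda(2p)<2$.
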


\begin{proof}
    Let $\beta \in (0, \sqrt{2})$ and $p > 1$, which will be chosen close enough to 1 depending on $\beta$ later on.
    Let $n \geq 1$ be given by Lemma~\ref{lem:bound_for_negative_moments} applied to $\beta$ and $p$.
    Let $t \geq 0$ and set $A = A_{n, t} = \{s_n \leq t\}$.
    On the one hand, using~\eqref{eq:decompo_W_t_spine},
    \begin{align}
        \E_{\Q_{2\beta}} \left[ W_t(\beta)^{-2p} \mathds{1}_A \right] &\leq \E_{\Q_{2\beta}} \left[ \left( \sum_{i = 1}^n \e^{\beta X_\xi(s_i) - \psi(\beta)s_i} W_{t - s_i}^{(i)}(\beta) \right)^{-2p} \mathds{1}_A \right] \nonumber \\
        &\leq C \E_{\Q_{2\beta}} \left[ \e^{(2p^2\beta^2 + 2p(1 - 3\beta^2/2)) s_n} \right], \nonumber
    \end{align}
    by Lemma~\ref{lem:bound_for_negative_moments}.
    If we choose $p$ close enough to 1, we have $2p^2\beta^2 + 2p(1 - 3\beta^2/2) < 2$ and then the above bound is finite (recall $s_n$ is $\Gamma(n,2)$ distributed under $\Q_{2\beta}$).
    On the other hand, keeping only the contribution of the spine to $W_t(\beta)$, we have
    \begin{equation*}
        \E_{\Q_{2\beta}} \left[ W_t(\beta)^{-2p} \mathds{1}_{A^c} \right] \leq \E_{\Q_{2\beta}} \left[ \e^{-2p \beta X_\xi(t) + 2p \psi(\beta)t} \right] \Q_{2\beta}(s_n > t) 
        \leq \e^{(2 - 3\beta^2 + 2p\beta^2)pt} C t^{n-1} \e^{-2t}.
    \end{equation*}
    If we choose $p$ close enough to 1, we have $2 - 3\beta^2 + 2p\beta^2 < 0$ and the above quantity converges to $0$ as $t \to \infty$.
    Combining the two previous bounds shows the result.
\end{proof}

\begin{proof}[Proof of Lemma~\ref{lem:convergence_in_mean_of_1/W_t^2}]
    By Lemma~\ref{lem:almost_sure_convergence_of_W_t}, for every $\beta \in (0, \sqrt{2\smash{/}3})$, the random variable $W_\infty(\beta)$ is $\Q_{2\beta}$-\as positive and finite.
    In particular, $\E_{\Q_{2\beta}}[W_\infty(\beta)^{-2}] > 0$.
    Besides, $W_t(\beta)^{-2}$ converges $\Q_{2\beta}$-\as to 
    $W_\infty(\beta)^{-2}$ as $t \to \infty$ and is uniformly integrable for $\Q_{2\beta}$, by Lemma~\ref{lem:bounded_in_Lp}.
    This implies that
    \begin{equation*}
        \E_{\Q_{2\beta}}[W_t(\beta)^{-2}] \xrightarrow[t \to \infty]{} \E_{\Q_{2\beta}}[W_\infty(\beta)^{-2}] < \infty,
    \end{equation*}
    which concludes the proof.
\end{proof}

It remains to prove Lemma~\ref{lem:replacement_of_the_denominator}.
We will use the following concentration inequality.

\begin{lemma}\label{lem:concentration_for_ratio}
    Let $\beta \in [0, 1)$ and $p \in [1, 2]$. There exists $C>0$ such that, for any $0 \leq s \leq t \leq \infty$,
    \begin{equation*}
        \Expec{ \left| \frac{W_t(\beta) - W_s(\beta)}{W_s(\beta)} \right|^p} \leq C \e^{-(1 - \beta^2)ps/2}.
    \end{equation*}
\end{lemma}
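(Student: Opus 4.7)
The plan is to prove the case $p=2$ first by an orthogonal decomposition, and then deduce the general $p\in[1,2]$ case by Jensen's inequality, since the claimed rate is exactly the $p/2$-th power of the $p=2$ rate. The key identity is the classical branching-property decomposition
\[
    W_t(\beta) - W_s(\beta) = \sum_{u \in \cN(s)} \e^{\beta X_u(s) - \psi(\beta) s} \bigl(W_{t-s}^{(u,s)}(\beta) - 1\bigr),
\]
which, after dividing by $W_s(\beta)$, writes $(W_t-W_s)/W_s$ as a weighted sum $\sum_{u} p_u Y_u$ where $p_u = \e^{\beta X_u(s) - \psi(\beta) s}/W_s(\beta)$ are the Gibbs weights (so $\sum_u p_u = 1$) and $Y_u = W_{t-s}^{(u,s)}(\beta) - 1$ are, conditionally on $\cF_s$, i.i.d.\ centered random variables with common variance $\Var(W_{t-s}(\beta))$, by the branching property \eqref{eq:branching_property_with_spine}.

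Taking the conditional variance and then expectation gives
\[
    \Expec{\left(\frac{W_t - W_s}{W_s}\right)^2}
    = \Var(W_{t-s}(\beta)) \cdot \Expec{\sum_{u \in \cN(s)} p_u^2}.
\]
The sum $\sum_u p_u^2$ simplifies explicitly:
\[
    \sum_{u\in\cN(s)} p_u^2
    = \frac{\sum_u \e^{2\beta X_u(s) - 2\psi(\beta) s}}{W_s(\beta)^2}
    = \e^{(\psi(2\beta) - 2\psi(\beta))s}\, \frac{W_s(2\beta)}{W_s(\beta)^2}
    = \e^{-(1-\beta^2) s}\, \frac{W_s(2\beta)}{W_s(\beta)^2}.
\]
Now, using that $W_s(2\beta)$ is the Radon--Nikodym derivative of $\Q_{2\beta}|_{\cF_s}$ with respect to $\P|_{\cF_s}$ (cf.\ \eqref{eq:definition_of_Q_b_without_spine}), we rewrite
\[
    \Expec{\frac{W_s(2\beta)}{W_s(\beta)^2}}
    = \E_{\Q_{2\beta}}\!\left[W_s(\beta)^{-2}\right],
\]
which is uniformly bounded in $s$ by Lemma~\ref{lem:bounded_in_Lp} (applied at inverse temperature $\beta \in (0,\sqrt{2})$, so in particular for $\beta \in (0,1)$). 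Together with the uniform bound $\Var(W_{t-s}(\beta)) \leq \E[W_{t-s}(\beta)^2] \leq C$, which comes from Lemma~\ref{lem:additive_martingale_convergence_in_Lp} applied with $p = 2$ (this requires precisely $\beta<1$), we obtain
\[
    \Expec{\left(\frac{W_t - W_s}{W_s}\right)^2} \leq C\, \e^{-(1-\beta^2) s}.
\]

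To conclude for general $p \in [1,2]$, it suffices to apply Jensen's inequality to the concave function $x \mapsto x^{p/2}$ on $[0,\infty)$:
\[
    \Expec{\left|\frac{W_t - W_s}{W_s}\right|^p}
    \leq \Expec{\left(\frac{W_t - W_s}{W_s}\right)^2}^{p/2}
    \leq C^{p/2} \e^{-(1-\beta^2) p s/2}.
\]
The only delicate step is the reduction to $\E_{\Q_{2\beta}}[W_s(\beta)^{-2}]$ and the invocation of Lemma~\ref{lem:bounded_in_Lp}; everything else is bookkeeping. I expect the main technical subtlety to lie at the boundary value $\beta = 0$, where Lemma~\ref{lem:bounded_in_Lp} does not directly apply (see the remark following Lemma~\ref{lem:convergence_in_mean_of_1/W_t^2}); handling $\beta=0$ would require a separate, direct Yule-process computation, but the argument above cleanly covers the range $\beta \in (0,1)$ that is needed in the applications.
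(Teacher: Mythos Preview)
Your argument is correct and follows essentially the same route as the paper's proof: both expand the square via the branching decomposition, use conditional orthogonality of the $W_{t-s}^{(u,s)}(\beta)-1$ given $\cF_s$, identify the resulting factor $\e^{-(1-\beta^2)s}W_s(2\beta)/W_s(\beta)^2$, rewrite its expectation as $\E_{\Q_{2\beta}}[W_s(\beta)^{-2}]$, and reduce from $p$ to $2$ by H\"older/Jensen. The only cosmetic difference is that the paper applies H\"older first and then computes the second moment, while you do it in the reverse order; and the paper cites Lemma~\ref{lem:convergence_in_mean_of_1/W_t^2} for the boundedness of $\E_{\Q_{2\beta}}[W_s(\beta)^{-2}]$, whereas you invoke Lemma~\ref{lem:bounded_in_Lp} directly --- your choice is in fact slightly cleaner, since Lemma~\ref{lem:bounded_in_Lp} covers all $\beta\in(0,\sqrt{2})$ while Lemma~\ref{lem:convergence_in_mean_of_1/W_t^2} is stated only for $\beta\in(0,\sqrt{2/3})$.

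Your observation about $\beta=0$ is well taken: neither your argument nor the paper's handles that endpoint, and indeed a direct Yule computation shows $\E_{\Q_0}[W_s(0)^{-2}]=\E[W_s(0)^{-1}]\sim s$, so the stated bound acquires an extra factor of $s$ there. Since the lemma is only invoked for $\beta\in(0,\sqrt{2/3})$ in the proof of Lemma~\ref{lem:replacement_of_the_denominator}, this does not affect the paper's results.
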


\begin{proof}
    We have, using decomposition~\eqref{eq:rewriting_additive_martingale},
    \begin{align}
        (W_t(\beta) - W_s(\beta))^2 &= \sum_{u \in \cN(s)} \e^{2\beta X_u(s) - 2\psi(\beta)s} (W_{t - s}^{(u, s)}(\beta) - 1)^2 \nonumber \\
        &\quad + \sum_{u, v \in \cN(s), u \neq v} \e^{\beta(X_u(s) + X_v(s)) - 2\psi(\beta)s} (W_{t - s}^{(u, s)}(\beta) - 1)(W_{t - s}^{(v, s)}(\beta) - 1), \nonumber
    \end{align}
    where, conditionally on $\cF_s$, the variables $W_{t - s}^{(u,s)}(\beta)$ for $u \in \cN(s)$ are independent with mean $1$ and are bounded in $L^2(\P)$ because $\beta <1$ (see Lemma~\ref{lem:additive_martingale_convergence_in_Lp}).
    Then, by applying successively H\"{o}lder's inequality and taking a conditional expectation given $\cF_s$, we obtain
    \begin{equation*}
        \Expec{ \left| \frac{W_t(\beta) - W_s(\beta)}{W_s(\beta)} \right|^p} \leq \Expec{\frac{(W_t(\beta) - W_s(\beta))^2}{W_s(\beta)^2}}^{p/2} \leq C \Expec{\frac{\e^{-(1 - \beta^2)s} W_s(2\beta)}{W_s(\beta)^2}}^{p/2},
    \end{equation*}
    since $\psi(2\beta) - 2\psi(\beta) = -(1 - \beta^2)$ and since the additive martingale is bounded in $L^2$.
    The result follows from Lemma~\ref{lem:convergence_in_mean_of_1/W_t^2}, which implies that $\E[W_s(2\beta) W_s(\beta)^{-2}] = \E_{\Q_{2\beta}}[W_s(\beta)^{-2}]$ converges to a finite limit and hence is bounded.
\end{proof}

\begin{proof}[Proof of Lemma~\ref{lem:replacement_of_the_denominator}]
    Let $\beta \in (0, \sqrt{2\smash{/}3})$.
    In order to take advantage of Lemma~\ref{lem:bound_for_negative_moments}, we will work on the event $A = A_{n, a't} = \{|\xi(a't)| \geq n\}$, where $a' \in (0, a)$ is ``close enough'' to $a$ and $n \geq 1$ is ``large enough''.
    However, in view of applying Lemma~\ref{lem:gibbs_measure_weights}, it will be convenient to deal with a slightly smaller event that is measurable w.r.t.\@ $\cF_\infty$.
    Let us define $\bar{A} = \bar{A}_{n, a't} = \{\forall u \in \cN(a't), |u| \geq n\}$.
    
    First, let us show that
    \begin{equation}\label{eq:replacement_of_the_denominator_A^c}
        \E \left[ (F_t W_{at}(\beta)^{-2} - F_t W_t(\beta)^{-2}) \mathds{1}_{\bar{A}^c} \right] \xrightarrow[t \to \infty]{} 0.
    \end{equation}
    By conditioning on $\cF_{at}$, we obtain
    \begin{equation}\label{eq:replacement_of_the_denominator_A^c_1}
        \E[F_t W_{at}(\beta)^{-2} \mathds{1}_{\bar{A}^c}] = \E[W_{at}(2\beta) W_{at}(\beta)^{-2} \mathds{1}_{\bar{A}^c}] = \E_{\Q_{2\beta}}[W_{at}(\beta)^{-2} \mathds{1}_{\bar{A}^c}].
    \end{equation}
    Note that $\mathds{1}_{\bar{A}^c}$ converges \as to $0$ as $t \to\infty$.
    In addition, a direct consequence of Lemma~\ref{lem:almost_sure_convergence_of_W_t}, Lemma~\ref{lem:convergence_in_mean_of_1/W_t^2} and Scheffé's lemma is that $W_{at}(\beta)^{-2}$ converges in $L^1(\Q_{2\beta})$.
    In particular, this process is uniformly integrable, and so is $W_{at}(\beta)^{-2} \mathds{1}_{\bar{A}^c}$. Hence, the right-hand side of~\eqref{eq:replacement_of_the_denominator_A^c_1} converges to $0$ as $t \to \infty$.
    Besides,
    \begin{equation*}
        \E[F_t W_t(\beta)^{-2} \mathds{1}_{\bar{A}^c}] = \e^{(1 - \beta^2)at} \E \left[ \nu_{\beta, t}([a, 1]) \mathds{1}_{\bar{A}^c} \right] \leq \e^{(1 - \beta^2)at} \P(\bar{A}^c).
    \end{equation*}
    Applying the union bound, the change of measure~\eqref{eq:definition_of_Q_b_without_spine} and Lemma~\ref{lem:gibbs_measure_weights}, we obtain
    \begin{equation}\label{eq:probability_of_Ac}
        \P(\bar{A}^c) \leq \Expec{\sum_{u\in\cN(a't)} \mathds{1}_{\{|u| \leq n\}}} = \E_{\Q_0} \left[ \sum_{u \in \cN(a't)} \frac{ \mathds{1}_{\{|u| \leq n\}}}{W_{a't}(0)} \right] = e^{a't} \Q_0(|\xi(a't)| \leq n) \leq C t^n e^{-a' t},
    \end{equation}
    using in the last inequality that, under $\Q_0$, the spinal generation $|\xi(s)|$ has Poisson distribution with parameter $2s$.
    Then,
    \begin{equation}\label{eq:replacement_of_the_denominator_A^c_bound}
        \E[F_t W_t(\beta)^{-2} \mathds{1}_{\bar{A}^c}] \leq C t^n \e^{(1 - \beta^2)at - a't}.
    \end{equation}
    By choice of $a'$, this bound converges to $0$ as $t \to \infty$.
    Hence~\eqref{eq:replacement_of_the_denominator_A^c}.
    
    Now, let us show that
    \begin{equation}\label{eq:replacement_of_the_denominator_A}
        \E[(F_t W_{at}(\beta)^{-2} - F_t W_t(\beta)^{-2}) \mathds{1}_{\bar{A}}] \xrightarrow[t \to \infty]{} 0.
    \end{equation}
    Recalling that $\Q_{2\beta, at}|_{\cF_\infty}$ has Radon--Nikodym derivative $W_{at}(2\beta)$ w.r.t.\@ $\P|_{\cF_\infty}$, the left-hand side of~\eqref{eq:replacement_of_the_denominator_A} equals
    \begin{align}
        &\E_{\Q_{2\beta, at}} \left[ \frac{F_t}{W_{at}(2\beta)} (W_t(\beta) - W_{at}(\beta)) \left( \frac{1}{W_t W_{at}(\beta)^2} + \frac{1}{W_t(\beta)^2 W_{at}(\beta)} \right) \mathds{1}_{\bar{A}} \right] \nonumber \\
        &\quad = \E_{\Q_{2\beta, at}} \left[ W_{t - at}^{(\xi(at), at)}(\beta)^2 (W_t(\beta) - W_{at}(\beta)) \left( \frac{1}{W_t(\beta) W_{at}(\beta)^2} + \frac{1}{W_t(\beta)^2 W_{at}(\beta)} \right) \mathds{1}_{\bar{A}} \right], \nonumber
    \end{align}
    where we applied Lemma~\ref{lem:gibbs_measure_weights} to rewrite
    \begin{equation*}
        \frac{F_t}{W_{at}(2\beta)} = \sum_{u \in \cN(at)} \Q_{2\beta, at} \left( \xi(at) = u \middle| \cF_t \right) W_{t - at}^{(u, at)}(\beta)^2.
    \end{equation*}
    Let us abbreviate $W_s^{(\xi(at), at)}(\beta) = W_s^{(\xi, at)}(\beta)$.
    Decomposing the process $|W_t(\beta) - W_{at}(\beta)|$ along the spine as in~\eqref{eq:decompo_W_t_spine} and applying the triangle inequality, we bound it by
    \begin{align}
        T_1(t) + T_2(t) + T_3(t)
        & = \sum_{i = 1}^n \e^{\beta X_\xi(s_i) - \psi(\beta)s_i} |W_{t - s_i}^{(i)}(\beta) - W_{at - s_i}^{(i)}(\beta)| \nonumber \\
        &\quad + \left| \sum_{i > n, s_i \leq at} \e^{\beta X_\xi(s_i) - \psi(\beta)s_i} (W_{t - s_i}^{(i)}(\beta) - W_{at - s_i}^{(i)}(\beta)) \right| \nonumber \\
        &\quad + \e^{\beta X_\xi(at) - \psi(\beta)at} |W_{t - at}^{(\xi, at)}(\beta) - 1|. \nonumber
    \end{align}
    Besides,
    \begin{equation*}
        \frac{1}{W_t(\beta) W_{at}(\beta)^2} + \frac{1}{W_t(\beta)^2 W_{at}(\beta)} \leq \frac{1}{W_{at}(\beta)^3} + \frac{2}{W_t(\beta)^2 W_{at}(\beta)}.
    \end{equation*}
    Therefore, and since $\bar{A} \subset A$, to obtain~\eqref{eq:replacement_of_the_denominator_A}, it suffices to show that, for $i = 1,2,3$ and $\gamma = 1, a$,
    \begin{equation}\label{eq:replacement_of_the_denominator_A_reformulation}
        \E_{\Q_{2\beta, at}} \left[ W_{t - at}^{(\xi, at)}(\beta)^2 \frac{T_i(t)}{W_{\gamma t}(\beta)^2 W_{at}(\beta)} \mathds{1}_A \right] \xrightarrow[t \to \infty]{} 0.
    \end{equation}

    \textbf{Control of $T_1$.}
    Let us show~\eqref{eq:replacement_of_the_denominator_A_reformulation} for $i = 1$.
    Let $\gamma = 1 \text{ or } a$.
    We have
    \begin{align}
    &\E_{\Q_{2\beta, at}} \left[ W_{t - at}^{(\xi, at)}(\beta)^2 \frac{T_1(t)}{W_{\gamma t}(\beta)^2 W_{at}(\beta)} \mathds{1}_A \right] \nonumber \\
    & \quad \leq \E \left[ W_{t - at}(\beta)^2 \right]
    \sum_{i = 1}^n \E_{\Q_{2\beta}} \Biggl[ \frac{\e^{\beta X_\xi(s_i) - \psi(\beta)s_i} |W_{t - s_i}^{(i)}(\beta) - W_{at - s_i}^{(i)}(\beta)|}{W_{at}(\beta)} \nonumber \\
    & \hspace{5.5cm} \times \Biggl( \sum_{j = 1, j \neq i}^n \e^{\beta X_\xi(s_j) - \psi(\beta)s_j} W_{\gamma t - s_j}^{(j)}(\beta) \Biggr)^{-2} \mathds{1}_A \Biggr]. \nonumber
    \end{align}
    Let us denote by $V_1^{(i)}(t)$ the process appearing in the last expectation.
    Since the additive martingale is bounded in $L^2(\P)$, it suffices to get the convergence in mean of $V_1^{(i)}(t)$ to $0$ as $t \to \infty$, for $i = 1, \ldots, n$.
    Using that $W_{at}(\beta) \geq \e^{\beta X_\xi(s_i) - \psi(\beta)s_i} W_{at - s_i}^{(i)}(\beta)$ and then applying Lemma~\ref{lem:concentration_for_ratio}, we obtain
    \begin{align}
        \E_{\Q_{2 \beta}} \left[ V_1^{(i)}(t) \right] &\leq \E_{\Q_{2 \beta}} \left[ \frac{|W_{t - s_i}^{(i)}(\beta) - W_{at - s_i}^{(i)}(\beta)|}{W_{at - s_i}^{(i)}(\beta)} \left( \sum_{j = 1, j \neq i}^n \e^{\beta X_\xi(s_j) - \psi(\beta)s_j} W_{\gamma t - s_j}^{(j)}(\beta) \right)^{-2} \mathds{1}_A \right] \nonumber \\
        &\leq C \E_{\Q_{2 \beta}} \left[ \e^{-(1-\beta^2)(at-s_i)/2} \left( \sum_{j = 1, j \neq i}^n \e^{\beta X_\xi(s_j) - \psi(\beta)s_j} W_{\gamma t - s_j}^{(j)}(\beta) \right)^{-2} \mathds{1}_A \right]. \nonumber
    \end{align}
    We can assume $n \geq n_0+1$ where $n_0$ is given by Lemma~\ref{lem:bound_for_negative_moments} applied to $\beta$ and $p = 2$.
    This yields
    \begin{equation*}
        \E_{\Q_{2 \beta}} \left[ V_1^{(i)}(t) \right] \leq C \E_{\Q_{2 \beta}} \left[ \e^{-(1-\beta^2)(at-s_i)/2} \e^{(2 - \beta^2) s_n} \mathds{1}_A \right].
    \end{equation*}
    By definition of the event $A$, we have $s_i \leq a't < at$.
    The above expectation is then bounded by
    \begin{equation*}
        C \e^{-(1-\beta^2)(a-a')t/2} \E_{\Q_{2 \beta}} \left[ \e^{(2 - \beta^2) s_n} \right] \xrightarrow[t \to \infty]{} 0,
    \end{equation*}
    since $s_n$ is $\Gamma(n, 2)$ distributed under $\Q_{2\beta}$ and $2-\beta^2 < 2$.
    Hence~\eqref{eq:replacement_of_the_denominator_A_reformulation} for $i = 1$.

    \textbf{Control of $T_2$.}
    Let us show~\eqref{eq:replacement_of_the_denominator_A_reformulation} for $i = 2$.
    Let $\gamma = 1 \text{ or } a$.
    We have
    \begin{align}
    &\E_{\Q_{2\beta, at}} \left[ W_{t - at}^{(\xi, at)}(\beta)^2 \frac{T_2(t)}{W_{\gamma t}(\beta)^2 W_{at}(\beta)} \mathds{1}_A \right] \nonumber \\
    &\quad \leq \E \left[ W_{t - at}(\beta)^2 \right]\E_{\Q_{2\beta}} \Biggl[ \Biggl| \sum_{i > n, s_i \leq at} \frac{\e^{\beta X_\xi(s_i) - \psi(\beta)s_i} (W_{t - s_i}^{(i)}(\beta) - W_{at - s_i}^{(i)}(\beta))}{W_{at}(\beta)} \Biggr| \nonumber \\
    &\hspace{6.5cm} \times \Biggl( \sum_{j = 1}^n \e^{\beta X_\xi(s_j) - \psi(\beta)s_j} W_{\gamma t - s_j}^{(j)}(\beta) \Biggr)^{-2} \mathds{1}_A \Biggr]. \nonumber
    \end{align}
    Let us denote by $V_2(t)$ the process appearing in the last expectation and show that it converges in mean to $0$ as $t \to \infty$.
    To do so, we show that it converges $\Q_{2\beta}$-\as to $0$ and that it is bounded in $L^p(\Q_{2\beta})$ for some $p > 1$.
    
    By Lemma~\ref{lem:almost_sure_convergence_of_W_t}, the processes $W_{at}(\beta)$ and $W_{\gamma t - s_j}^{(j)}(\beta)$ for $j \geq 1$ converge $\Q_{2\beta}$-\as to some positive limits as $t \to \infty$.
    Then, the $\Q_{2\beta}$-\as convergence of $V_2(t)$ to $0$ will follow from
    \begin{equation}\label{eq:almost_sure_convergence_of_V_t}
        \sum_{i \geq 1} \e^{\beta X_\xi(s_i) - \psi(\beta)s_i} (W_{t - s_i}^{(i)}(\beta) - W_{at - s_i}^{(i)}(\beta)) \mathds{1}_{s_i \leq at} \xrightarrow[t \to \infty]{} 0, \quad \text{$\Q_{2\beta}$-\as}
    \end{equation}
    Each term of the above series converges $\Q_{2\beta}$-\as to $0$ and is dominated by $2 \e^{\beta X_\xi(s_i) - \psi(\beta)s_i} \sup_{r \geq 0} W_r^{(i)}(\beta)$.
    By Doob's maximal inequality, this domination is summable.
    Hence~\eqref{eq:almost_sure_convergence_of_V_t}.
    
    Now fix $p \in (1, 2)$ such that $\alpha_p = 2p^2\beta^2 + 2p(1 - 3\beta^2/2) < 2$.
    We can assume $n \geq n_0$ where $n_0$ is given by Lemma~\ref{lem:bound_for_negative_moments} applied to $\beta$ and $2p$.
    Using that $W_{at}(\beta) \geq \e^{\beta X_\xi(s_i) - \psi(\beta)s_i} W_{at - s_i}^{(i)}(\beta)$, applying Lemmas~\ref{lem:von_Bahr} together with Lemma~\ref{lem:bound_for_negative_moments}, and finally Lemma~\ref{lem:concentration_for_ratio}, we obtain
    \begin{align}
        \E_{\Q_{2 \beta}} \left[ V_2(t)^p \right] &\leq \E_{\Q_{2 \beta}} \left[ \left| \sum_{i > n, s_i \leq at} \frac{W_{t - s_i}^{(i)}(\beta) - W_{at - s_i}^{(i)}(\beta)}{W_{at - s_i}^{(i)}(\beta)} \right|^p \left( \sum_{j = 1}^n \e^{\beta X_\xi(s_j) - \psi(\beta)s_j} W_{\gamma t - s_j}^{(j)}(\beta) \right)^{-2p} \mathds{1}_A \right] \nonumber \\
        &\leq C \E_{\Q_{2 \beta}} \left[ \sum_{i > n, s_i \leq at} \E_{\Q_{2 \beta}} \left[ \left| \frac{W_{t - s_i}^{(i)}(\beta) - W_{at - s_i}^{(i)}(\beta)}{W_{at - s_i}^{(i)}(\beta)} \right|^p \middle| \Pi \right] \e^{\alpha_p s_n} \mathds{1}_A \right] \nonumber \\
        &\leq C \E_{\Q_{2 \beta}} \left[ \sum_{i > n, s_i \leq at} \e^{-(1 - \beta^2)p(at - s_i)/2} \e^{\alpha_p s_n} \mathds{1}_A \right]. \nonumber
    \end{align}
    We then decompose the sum over the spinal branching times located in $[0, a't]$ and those in $(a't, at]$.
    First,
    \begin{equation*}
        \E_{\Q_{2 \beta}} \left[ \sum_{i > n, s_i \leq a't} \e^{-(1 - \beta^2)p(at - s_i)/2} \e^{\alpha_p s_n} \mathds{1}_A \right] \leq \e^{-(1 - \beta^2)p(a - a')t/2} \E_{\Q_{2 \beta}} \left[ \#(\Pi \cap [0, a't]) \e^{\alpha_p s_n} \right].
    \end{equation*}
    Note that $\#(\Pi \cap [0, a't])$ has Poisson distribution with parameter $2a't$ and then its moment of order $k \geq 1$ is an $O(t^k)$.
    Thanks to H\"{o}lder's inequality, we see that the last expectation factor is an $O(t)$ since $\alpha_p < 2$.
    In particular, the above quantity is bounded.
    Now, by independence,
    \begin{align}
        \E_{\Q_{2 \beta}} \left[ \sum_{i > n, a't < s_i \leq at} \e^{-(1 - \beta^2)p(at - s_i)/2} \e^{\alpha_p s_n} \mathds{1}_A \right] &= \E_{\Q_{2 \beta}} \left[ \sum_{a't < s_i \leq at} \e^{-(1 - \beta^2)p(at - s_i)/2} \right] \E_{\Q_{2 \beta}} \left[ \e^{\alpha_p s_n} \mathds{1}_A \right] \nonumber \\
        &\leq \left( \int_{a't}^{at} \e^{-(1 - \beta^2)p(at - s)/2} 2\d{s} \right) \E_{\Q_{2 \beta}} \left[ \e^{\alpha_p s_n} \right], \nonumber
    \end{align}
    which is also bounded since $\alpha_p < 2$.
    Finally, the process $V_2(t)$ is bounded in $L^p(\Q_{2\beta})$ and then its convergence \as to $0$ implies its convergence in mean.
    Hence~\eqref{eq:replacement_of_the_denominator_A_reformulation} for $i = 2$.

    \textbf{Control of $T_3$.}
    Let us show~\eqref{eq:replacement_of_the_denominator_A_reformulation} for $i = 3$.
    Let $\gamma = 1 \text{ or } a$.
    We have
    \begin{align}
        &\E_{\Q_{2\beta, at}} \left[ W_{t - at}^{(\xi, at)}(\beta)^2 \frac{T_3(t)}{W_{\gamma t}(\beta)^2 W_{at}(\beta)} \mathds{1}_A \right] \nonumber \\
        &\quad \leq \E \left| W_{t - at}(\beta)^3 - W_{t - at}(\beta)^2 \right| \E_{\Q_{2\beta}} \left[ \frac{\e^{\beta X_\xi(at) - \psi(\beta) at}}{W_{at}(\beta)} \left( \sum_{i = 1}^n \e^{\beta X_\xi(s_i) - \psi(\beta)s_i} W_{\gamma t - s_i}^{(i)}(\beta) \right)^{-2} \mathds{1}_A \right]. \nonumber
    \end{align}
    Let us denote by $V_3(t)$ the process appearing in the last expectation.
    Since the additive martingale is bounded in $L^3(\P)$, it suffices to show that $V_3(t)$ converges in mean to $0$ as $t \to \infty$.
    But it converges $\Q_{2\beta}$-\as to $0$ since
    \begin{itemize}
        \item the processes $W_{at}(\beta)$ and $W_{\gamma t - s_i}^{(i)}(\beta)$ for $i \geq 1$ converge \as to some positive limits as $t \to \infty$, by Lemma~\ref{lem:almost_sure_convergence_of_W_t},
        \item $\beta X_\xi(s) - \psi(\beta)s = \beta B_s - (1 - 3\beta^2/2)s$, where $B_s$ is a standard Brownian motion, and $1 - 3\beta^2/2>0$.
    \end{itemize}
    In addition, the process $V_3(t)$ is bounded in $L^p(\Q_{2\beta})$.
    Indeed, noting that $\e^{\beta X_\xi(at) - \psi(\beta) at} \leq W_{at}(\beta)$,
    \begin{equation*}
        \E_{\Q_{2\beta}} \left[ V_3(t)^p \right] \leq \E_{\Q_{2\beta}} \left[ \left( \sum_{i = 1}^n \e^{\beta X_\xi(s_i) - \psi(\beta)s_i} W_{\gamma t - s_i}^{(i)}(\beta) \right)^{-2p} \right],
    \end{equation*}
    and we see that the right-hand side is bounded thanks to Lemma~\ref{lem:bound_for_negative_moments}.
    Finally, the process $V_3(t)$ converges in mean to $0$.
    Hence~\eqref{eq:replacement_of_the_denominator_A_reformulation} for $i = 3$.
    This concludes the proof.
\end{proof}

\subsection{Lower temperature}

\begin{proof}[Proof of Theorem~\ref{thm:asymptotics_in_mean}.\ref{it:asymptotics_in_mean_4}]
    We tackle separately the lower bound and the upper bound.
    
    \textbf{Lower bound:}
    Our goal is to find a constant $c > 0$ such that, for every $t \geq 1$,
    \begin{equation}\label{eq:lower_temperature_lower_bound_formulation1}
        \Expec{\nu_{\beta, t}([a, 1])} \geq c t^{-3/2} \e^{-(2 - \beta^2)^2 at/8\beta^2}.
    \end{equation}
    It will be useful for the proof of Theorem~\ref{thm:asymptotics_in_mean}.\ref{it:asymptotics_in_mean_3} to note that the arguments below work for $\beta = \sqrt{2\smash{/}3}$ as well, so we only assume $\beta \in [\sqrt{2\smash{/}3}, \sqrt{2})$.
    
    We can restrict ourselves to the event $\Gamma = \Gamma_{at} = \{\sup_{s \in [0, at]} (X_\xi(s) - v(\beta) s + f_{at}(s)) \leq 1\}$, where $v(\beta) = 1/\beta + \beta/2$, $f_t(s) = s^\alpha \wedge (t-s)^\alpha$ and $\alpha$ is an arbitrary number in $(0, 1/2)$.
    By~\eqref{eq:overlap_rewriting}, the definition of $\Q_{2\beta,at}$, and then Lemma~\ref{lem:gibbs_measure_weights},
    \begin{align}
        \Expec{\nu_{\beta, t}([a, 1])} &= \e^{(\beta^2 - 1) at} \E_{\Q_{2\beta, at}} \left[ \frac{1}{W_t(\beta)^2} \sum_{u \in \cN(at)} \frac{\e^{2\beta X_u(at) - \psi(2\beta)at}}{W_{at}(2\beta)} W_{t - at}^{(u, at)}(\beta)^2 \right] \nonumber \\
        &= \e^{(\beta^2 - 1) at} \E_{\Q_{2\beta, at}} \left[ W_{t - at}^{(\xi, at)}(\beta)^2 W_t(\beta)^{-2} \right] \geq \e^{(\beta^2 - 1) at} \E_{\Q_{2\beta, at}} \left[ W_{t - at}^{(\xi, at)}(\beta)^2 W_t(\beta)^{-2} \mathds{1}_\Gamma \right]. 
        \label{eq:overlap_rewriting_2}
    \end{align}
    Then, to prove~\eqref{eq:lower_temperature_lower_bound_formulation1}, it suffices to get
    \begin{equation}\label{eq:lower_temperature_lower_bound_formulation2}
        \E_{\Q_{2\beta, at}} \left[ W_{t - at}^{(\xi, at)}(\beta)^2 W_t(\beta)^{-2} \mathds{1}_\Gamma \right] \geq c t^{-3/2} \e^{-(2\beta - v(\beta))^2at/2}.
    \end{equation}
    Decomposing $W_t(\beta)$ along the spine as in \eqref{eq:decompo_W_t_spine}, we can rewrite the left-hand side of~\eqref{eq:lower_temperature_lower_bound_formulation2} as
    \begin{equation*}
        \E_{\Q_{2\beta, at}} \left[ W_{t - at}^{(\xi, at)}(\beta)^2 (U_t + V_t W_{t - at}^{(\xi, at)}(\beta))^{-2} \mathds{1}_\Gamma \right],
    \end{equation*}
    where
    \begin{equation} \label{eq:def_U_V}
        U_t = \sum_{i \geq 1, s_i \leq at} \e^{\beta X_\xi(s_i) - \psi(\beta)s_i} W_{t - s_i}^{(i)}(\beta) \quad \text{and} \quad V_t = \e^{\beta X_\xi(at) - \psi(\beta)at}.
    \end{equation}
    For all $U, V, W > 0$,
    \begin{equation*}
        W^2 (U + V W)^{-2} \geq W^2 (V + V W)^{-2} \wedge W^2 (U + U W)^{-2} \geq W^2 (1 + W)^{-2} (U + V)^{-2}.
    \end{equation*}
    Thus, the left-hand side of~\eqref{eq:lower_temperature_lower_bound_formulation2} admits the following lower bound,
    \begin{equation*}
        \E \left[ W_{t - at}(\beta)^2 (1 + W_{t - at}(\beta))^{-2} \right] \E_{\Q_{2\beta}} \left[ (U_t + V_t)^{-2} \mathds{1}_\Gamma \right] 
        \geq \E \left[ W_\infty(\beta)^2 (1 + W_\infty(\beta))^{-2} \right] \E_{\Q_{2\beta}} \left[ (U_t + V_t)^{-2} \mathds{1}_\Gamma \right],  \nonumber
    \end{equation*}
    by Fatou's lemma.
    Since $W_\infty(\beta) > 0$ \as, to prove~\eqref{eq:lower_temperature_lower_bound_formulation2}, it suffices to get
    \begin{equation}\label{eq:lower_temperature_lower_bound_formulation3}
        \E_{\Q_{2\beta}} \left[ (U_t + V_t)^{-2} \mathds{1}_\Gamma \right] \geq c t^{-3/2} \e^{-(2\beta - v(\beta))^2at/2}.
    \end{equation}
    On the event $\Gamma$, since $\beta v(\beta) = \psi(\beta)$,
    \begin{equation*}
        U_t \leq \e^{\beta} \sum_{i \geq 1, s_i \leq at} \e^{-\beta f_{at}(s_i)} W_{t - s_i}^{(i)}(\beta) \quad \text{and} \quad V_t \leq \e^{\beta}.
    \end{equation*}
    Then,
    \begin{equation}\label{eq:lower_temperature_lower_bound_markov_property}
        \E_{\Q_{2\beta}} \left[ (U_t + V_t)^{-2} \mathds{1}_\Gamma \right] \geq \e^{-2 \beta} \E_{\Q_{2\beta}} \left[ \left( \sum_{i \geq 1, s_i \leq at} \e^{-\beta f_{at}(s_i)} W_{t - s_i}^{(i)}(\beta) + 1 \right)^{-2} \right] \Q_{2\beta} \left( \Gamma \right).
    \end{equation}
    By Jensen's inequality and recalling the $s_i$'s are the points of a PPP($2 \d{s}$),
    \begin{equation}\label{eq:lower_temperature_lower_bound_jensen}
        \E_{\Q_{2\beta}} \left[ \left( \sum_{i \geq 1, s_i \leq at} \e^{-\beta f_{at}(s_i)} W_{t - s_i}^{(i)}(\beta) + 1 \right)^{-2} \right] \geq \left( \int_0^{at} \e^{-\beta f_{at}(s)} 2 \d{s} + 1 \right)^{-2} \geq c > 0.
    \end{equation}
    Besides, recalling that, under $\Q_{2\beta}$, $(X_\xi(s))_{s\geq 0}$ is a Brownian motion with drift $2\beta$, it follows from Girsanov theorem that
    \begin{equation}\label{eq:lower_temperature_lower_bound_before_ballot_theorem}
        \Q_{2\beta} \left( \Gamma \right) 
        = \E \left[ \e^{(2\beta - v(\beta))B_{at}-(2\beta - v(\beta))^2 at/2} \mathds{1}_{\{\sup_{s \in [0, at]} (B_s + f_{at}(s)) \leq 1\}} \right],
    \end{equation}
    where $(B_s)_{s\geq 0}$ denotes a standard Brownian motion.
    We then restrict ourselves to the event $\{ B_{at} \in [0,1] \}$ and use that $2\beta - v(\beta) \geq 0$ to get
    \begin{equation}
        \Q_{2\beta} \left( \Gamma \right) 
        \geq \e^{-(2\beta - v(\beta))^2 at/2} \P \left( \sup_{s \in [0, at]} (B_s + f_{at}(s)) \leq 1, B_{at} \in [0, 1] \right) 
        \geq c \e^{-(2\beta - v(\beta))^2 at/2} t^{-3/2}, \label{eq:lower_temperature_lower_bound_ballot_theorem}
    \end{equation}
    by~\eqref{eq:ballot_theorem_plus}.
    By~\eqref{eq:lower_temperature_lower_bound_markov_property}, \eqref{eq:lower_temperature_lower_bound_jensen} and~\eqref{eq:lower_temperature_lower_bound_ballot_theorem}, the inequality~\eqref{eq:lower_temperature_lower_bound_formulation3} holds.
    Hence, it proves~\eqref{eq:lower_temperature_lower_bound_formulation1} and the lower bound in Theorem~\ref{thm:asymptotics_in_mean}.\ref{it:asymptotics_in_mean_4}.
    
    \textbf{Upper bound:}
    Our goal is to show that
    \begin{equation}\label{eq:formulation1}
        \Expec{\nu_{\beta, t}([a, 1])} = O(t^{-3/2} \e^{-(2 - \beta^2)^2 at/8\beta^2}).
    \end{equation}
    We work with $\beta \in [\sqrt{2\smash{/}3},\sqrt{2})$, except in the last step where we specify that $\beta>\sqrt{2\smash{/}3}$, so that the beginning of the argument can be used to prove the upper bound in Theorem~\ref{thm:asymptotics_in_mean}.\ref{it:asymptotics_in_mean_3} as well.
    
    Consider an arbitrary integer $n \geq 1$ and define $\bar{A} = \bar{A}_{2n, at} = \{\forall u \in \cN(at), |u| \geq 2n\}$.
    As seen in~\eqref{eq:probability_of_Ac}, we have
    \begin{equation}\label{eq:lower_temperature_upper_bound_complementary_event}
        \Expec{\nu_{\beta, t}([a, 1]) \mathds{1}_{\bar{A}^c}} \leq \P(\bar{A}^c) \leq C t^{2n-1} \e^{-at} = o(t^{-3/2} \e^{-(2 - \beta^2)^2 at/8\beta^2}),
    \end{equation}
    using that $(2 - \beta^2)^2/8\beta^2 \leq 1/3 < 1$ for $\beta \in [\sqrt{2\smash{/}3}, \sqrt{2})$.
    Concerning the contribution of the event $\bar{A}$, we can rewrite, similarly to~\eqref{eq:overlap_rewriting_2},
    \begin{align}
        \Expec{\nu_{\beta, t}([a, 1]) \mathds{1}_{\bar{A}}} &= \e^{(\beta^2 - 1) at} \E_{\Q_{2\beta, at}} \left[ W_{t - at}^{(\xi, at)}(\beta)^2 W_t(\beta)^{-2} \mathds{1}_{\bar{A}} \right] \nonumber \\
        &\leq \e^{(\beta^2 - 1) at} \E_{\Q_{2\beta, at}} \left[ W_{t - at}^{(\xi, at)}(\beta)^2 W_t(\beta)^{-2} \mathds{1}_A \right], \label{eq:lower_temperature_upper_bound_change_of_measure}
    \end{align}
    where $A = A_{2n, at} = \{|\xi(at)| \geq 2n\}$.
    By~\eqref{eq:lower_temperature_upper_bound_complementary_event} and~\eqref{eq:lower_temperature_upper_bound_change_of_measure}, to prove~\eqref{eq:formulation1}, it suffices to get
    \begin{equation}\label{eq:formulation2}
        \E_{\Q_{2\beta, at}} \left[ W_{t - at}^{(\xi, at)}(\beta)^2 W_t(\beta)^{-2} \mathds{1}_A \right] = O(t^{-3/2} \e^{-(2\beta - v(\beta))^2 at/2}),
    \end{equation}
    where $v(\beta) = 1/\beta + \beta/2$.
    Then we decompose $W_t(\beta) = U_t + V_t W_{t - at}^{(\xi, at)}(\beta)$ with $U_t$ and $V_t$ defined in~\eqref{eq:def_U_V}. We also change measures, first using that $\Q_{2\beta, at}$ has density $\e^{2\beta X_\xi(at)-2\beta^2at}$ w.r.t.\@ $\Q_{0, at}$, and then applying Girsanov theorem under $\Q_{0, at}$ to trade the factor $\e^{v(\beta)X_\xi(at)-v(\beta)^2at/2}$ for a drift $v(\beta)$ added to the spine trajectory, so that $U_t$ and $V_t$ become (recall $\psi(\beta) = \beta v(\beta)$)
    \begin{equation*}
        U_t' = \sum_{i \geq 1, s_i \leq at} \e^{\beta X_\xi(s_i)} W_{t - s_i}^{(i)}(\beta) \quad \text{and} \quad V_t' = \e^{\beta X_\xi(at)}.
    \end{equation*}
    Therefore, we get that~\eqref{eq:formulation2} is equivalent to
    \begin{equation}\label{eq:formulation3}
        \E_{\Q_{0, at}} \left[ \e^{(2\beta - v(\beta)) X_\xi(at)} W_{t - at}^{(\xi, at)}(\beta)^2 (U_t' + V_t' W_{t - at}^{(\xi, at)}(\beta))^{-2} \mathds{1}_A \right] = O(t^{-3/2}).
    \end{equation}
    
    For all $U, V, W > 0$ and any $\alpha \in [0, 2]$,
    \begin{equation*}
        W^2 (U + V W)^{-2} \leq W^2 U^{-\alpha} (V W)^{-2 + \alpha} 
        = U^{-\alpha} V^{-2 + \alpha} W^\alpha.
    \end{equation*}
    Thus, the left-hand side of~\eqref{eq:formulation3} is bounded by
    \begin{align}
        &\E_{\Q_{0, at}} \left[ \e^{(2\beta - v(\beta))X_\xi(at)} (U_t')^{-\alpha} (V_t')^{-2 + \alpha} W_{t - at}^{(\xi, at)}(\beta)^\alpha \mathds{1}_A \right] \nonumber \\
        &= \E \left[ W_{t - at}(\beta)^\alpha \right] \E_{\Q_0} \left[ \e^{(\alpha \beta - v(\beta)) X_\xi(at)} (U_t')^{-\alpha} \mathds{1}_A \right]. \nonumber
    \end{align}
    Let $\alpha \in [0, 2] \cap [1, 2/\beta^2)$ so that the additive martingale is bounded in $L^\alpha(\P)$ by Lemma~\ref{lem:additive_martingale_convergence_in_Lp}.
    To prove~\eqref{eq:formulation3}, it suffices to get
    \begin{equation}\label{eq:formulation4}
        \E_{\Q_0} \left[ \e^{(\alpha \beta - v(\beta)) X_\xi(at)} (U_t')^{-\alpha} \mathds{1}_A \right] = O(t^{-3/2}).
    \end{equation}
    Now, decompose the process over the following events, indexed by integers $k, \ell \geq 0$,
    \begin{equation}\label{eq:definition_E_kl}
        E_{k, \ell} = E_{k, \ell}(X_\xi(s), 0 \leq s \leq at) = \left\{ \sup_{[0, at]} X_\xi \in [k, k+1], \sup_{[0, at]} X_\xi - X_\xi(at) \in [\ell, \ell+1] \right\}.
    \end{equation}
    Fix $k, \ell$ and denote $s_{k, n}$ the $n$-th point of $\Pi$ after time $\tau_k = \inf\{s \geq 0 : X_\xi(s) = k\}$.
    Let us first treat the case $s_{k, n} \leq at$.
    On the event $E_{k, \ell} \cap \{s_{k, n} \leq at\}$,
    \begin{align}
        &\E_{\Q_0} \left[ \e^{(\alpha \beta - v(\beta)) X_\xi(at)} (U_t')^{-\alpha} \middle| \Pi, X_\xi \right] \nonumber \\
        &\quad \leq C \e^{-v(\beta)k - (\alpha \beta - v(\beta))\ell} \E_{\Q_0} \left[ \left( \sum_{i \geq 1, \tau_k \leq s_i \leq s_{k, n}} \e^{\beta(X_\xi(s_i) - X_\xi(\tau_k))} W_{t - s_i}^{(i)}(\beta) \right)^{-\alpha} \middle| \Pi, X_\xi \right] \nonumber \\
        &\quad \leq C \e^{-v(\beta)k - (\alpha \beta - v(\beta))\ell} \e^{\alpha \beta \sup_{[\tau_k, s_{k, n}]} (X_\xi(\tau_k) - X_\xi)} \E_{\Q_0} \left[ \left( \sum_{i \geq 1, \tau_k \leq s_i \leq s_{k, n}} W_{t - s_i}^{(i)}(\beta) \right)^{-\alpha} \middle| \Pi, X_\xi \right].  \nonumber
    \end{align}
    By Lemma~\ref{lem:negative_moments_linear_combination} and Lemma~\ref{lem:left_tail_additive_martingale}, we can choose $n$ so that the last conditional expectation is bounded, which yields
    \begin{align}
        & \E_{\Q_0} \left[ \e^{(\alpha \beta - v(\beta)) X_\xi(at)} (U_t')^{-\alpha} \mathds{1}_A \mathds{1}_{E_{k, \ell} \cap \{s_{k, n} \leq at\}} \right] \nonumber \\
        &\quad \leq C \e^{-v(\beta)k - (\alpha \beta - v(\beta))\ell} \E_{\Q_0} \left[ \e^{\alpha \beta \sup_{[\tau_k, s_{k, n}]} (X_\xi(\tau_k) - X_\xi)} \mathds{1}_{E_{k, \ell} \cap \{s_{k, n} \leq at\}} \right]. \label{eq:lower_temperature_upper_bound_enough_branching_times}
    \end{align}
    By using time reversal invariance of Brownian motion and Poisson point process, we can treat the case $s_{k, n} > at$ similarly.
    Indeed, on the event $A$, if $s_{k, n} > at$, then there are at least $n$ spinal branching times between $0$ and $\tau_k$.
    Define $s_i' = at - s_i$, $\Pi'$ the corresponding point process and $X_\xi'(s) = X_\xi(at - s) - X_\xi(at)$. Denote $s_{\ell, n}'$ the $n$-th point of $\Pi'$ after time $\tau_\ell' = \inf\{s \geq 0 : X_\xi'(s) = \ell\}$ if it exists, and set $s_{\ell, n}' = +\infty$ otherwise.
    Note that, on $E_{k,\ell}$, we have $\tau_k \leq at-\tau_\ell'$, so on the event $E_{k,\ell} \cap A \cap \{ s_{k, n} > at \}$, there are at least $n$ point of $\Pi'$ after $\tau_\ell'$, so we get
    \begin{align}
        &\E_{\Q_0} \left[ \e^{(\alpha \beta - v(\beta)) X_\xi(at)} (U_t')^{-\alpha} \mathds{1}_A \mathds{1}_{E_{k, \ell} \cap \{s_{k, n} > at\}} \right] \nonumber \\
        &\leq \E_{\Q_0} \left[ \e^{-(\alpha \beta - v(\beta)) X_\xi'(at)} \left( \sum_{i \geq 1, \tau_\ell' \leq s_i' \leq s_{\ell, n}'} \e^{\beta(X_\xi'(s_i') - X_\xi'(at))} W_{t - (at - s_i')}^{(i)}(\beta) \right)^{-\alpha} \mathds{1}_{E_{\ell, k}(X_\xi'(s), 0 \leq s \leq at) \cap \{s_{\ell, n}' \leq at\}} \right] \nonumber \\
        &= \E_{\Q_0} \left[ \e^{-(\alpha \beta - v(\beta)) X_\xi(at)} \left( \sum_{i = 1}^n \e^{\beta(X_\xi(s_{\ell, i}) - X_\xi(at))} W_{t - (at - s_{\ell, i})}^{(i)}(\beta) \right)^{-\alpha} \mathds{1}_{E_{\ell, k}(X_\xi(s), 0 \leq s \leq at) \cap \{s_{\ell, n} \leq at\}} \right] \nonumber \\
        &\leq C \e^{-v(\beta)k -(\alpha \beta - v(\beta))\ell} \E_{\Q_0} \left[ \e^{\alpha \beta \sup_{[\tau_\ell, s_{\ell, n}]} (X_\xi(\tau_\ell) - X_\xi)} \mathds{1}_{E_{\ell, k} \cap \{s_{\ell, n} \leq at\}} \right], \label{eq:lower_temperature_upper_bound_not_enough_branching_times}
    \end{align}
    by Lemma~\ref{lem:negative_moments_linear_combination} and Lemma~\ref{lem:left_tail_additive_martingale} as before.
    Using $\alpha \leq 2$ if $\beta<1$ and $\alpha < 2/\beta^2$ otherwise, we have $\alpha \beta < 2$, which allows us to apply Lemma~\ref{lem:control_on_E_kl} below.
    This together with~\eqref{eq:lower_temperature_upper_bound_enough_branching_times} and~\eqref{eq:lower_temperature_upper_bound_not_enough_branching_times} lead to
    \begin{equation} \label{eq:lower_temperature_upper_bound_last_but_one_step}
        \E_{\Q_0} \left[ \e^{(\alpha \beta - v(\beta)) X_\xi(at)} (U_t')^{-\alpha} \mathds{1}_A \right] \leq C t^{-3/2} \sum_{k = 0}^\infty \sum_{\ell = 0}^\infty \e^{-v(\beta)k - (\alpha \beta - v(\beta))\ell} (k + \ell + 1) \e^{- c \ell^2/t}.
    \end{equation}
    We finally assume that $\beta \in (\sqrt{2\smash{/}3},\sqrt{2})$, so that $v(\beta)/\beta < 2 \wedge 2/\beta^2$ and we can thus choose $\alpha > v(\beta)/\beta$. This ensures that the summand in~\eqref{eq:lower_temperature_upper_bound_last_but_one_step} decreases exponentially fast in $\ell$, and be can simply bound $\e^{- c \ell^2/t} \leq 1$.
    Then, the double sum in~\eqref{eq:lower_temperature_upper_bound_last_but_one_step} is a $O(1)$, which proves~\eqref{eq:formulation4} and therefore~\eqref{eq:formulation1}.
    This concludes the proof.
\end{proof}

\begin{lemma}\label{lem:control_on_E_kl}
    For any $\gamma \in [0,2)$, there exist $C, c > 0$ such that, for all $t \geq 1$ and $k, \ell \geq 0$,
    \begin{equation}\label{eq:control_on_E_kl}
        \E_{\Q_0} \left[ \e^{\gamma \sup_{[\tau_k, s_{k, n}]} (X_\xi(\tau_k) - X_\xi)} \mathds{1}_{E_{k, \ell}(X_\xi(s), 0 \leq s \leq t)} \mathds{1}_{\{s_{k, n} \leq t\}} \right] \leq C (k + \ell + 1) t^{-3/2} \e^{-c k^2/t - c \ell^2/t}.
    \end{equation}
\end{lemma}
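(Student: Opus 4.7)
My plan is to apply the strong Markov property of the pair $(X_\xi, \Pi)$ at the hitting time $\tau_k$ of level $k$ by the spine. Under $\Q_0$, $X_\xi$ is a standard Brownian motion and $\Pi$ is an independent Poisson point process of intensity $2$. Conditionally on $\{\tau_k \leq t\}$, the shifted process $B_s := X_\xi(\tau_k + s) - k$ is a standard Brownian motion independent of the shifted point process, whose $n$-th atom I denote $\sigma_n$ (so that $s_{k,n} = \tau_k + \sigma_n$). Writing $r := t - \tau_k$ and noting that $\sup_{[0,\tau_k]} X_\xi = k$ automatically, the event $E_{k,\ell} \cap \{s_{k,n}\leq t\}$ becomes $\{\sup_{[0,r]} B \leq 1,\; \sup_{[0,r]} B - B_r \in [\ell,\ell+1],\; \sigma_n \leq r\}$, and the exponential weight equals $e^{\gamma M_{\sigma_n}}$ with $M_u := \sup_{s \in [0,u]}(-B_s)$. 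Denoting by $p_{\tau_k}(s) = k(2\pi s^3)^{-1/2} e^{-k^2/(2s)}$ the hitting-time density (and $\delta_0$ if $k=0$), the left-hand side of \eqref{eq:control_on_E_kl} equals $\int_0^t p_{\tau_k}(s)\, F(t-s)\,\mathrm{d}s$, where $F(r)$ is the $\Q_0$-expectation of the above post-$\tau_k$ quantity.

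I will then claim that $F(r) \leq C(\ell+1) r^{-3/2} e^{-c\ell^2/r}$ for $r\geq 1$, together with a trivial bound $F(r) \leq C$ (which follows from the finiteness of $\E[e^{\gamma M_{\sigma_n}}]$ when $\gamma<2$, via the reflection identity $M_u \overset{d}{=} |B_u|$ combined with the Gamma density of $\sigma_n$). By independence of $B$ and $\sigma_n$ I will integrate first over the $\Gamma(n, 2)$ density of $\sigma_n$, and for each $u \in (0,r)$ apply the Markov property at $u$: the remaining ballot event on the post-$u$ trajectory $B' = B_{u+\cdot} - B_u$ reduces to $\{\sup_{[0,r-u]} B' \leq 1 - B_u,\; B'_{r-u} \text{ in an interval of length at most } 2\}$, which by a few applications of Lemma \ref{lem:brownian_estimate} on unit sub-intervals has conditional probability bounded by $C(1 - B_u)_+(\ell+1)(r-u)^{-3/2} e^{-c\ell^2/(r-u)}$. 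Using $(1-B_u)_+ \leq 1 + M_u$ and the reflection-principle estimate $\E[e^{\gamma M_u}(1+M_u)] \leq C(1+u)e^{\gamma^2 u/2}$ to integrate out $B_u$, and then splitting the $u$-integral at $r/2$, the dominant piece near $u=0$ is summable against $u^{n-1} e^{-(2-\gamma^2/2)u}$ (integrable precisely because $\gamma<2$) and gives the claim, while the remaining piece on $[r/2,r]$ is exponentially small in $r$.

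To conclude, I will plug the bound on $F$ into the representation and invoke the classical convolution identity $p_{\tau_k} \ast p_{\tau_\ell} = p_{\tau_{k+\ell}}$, which holds because first-passage times of independent Brownian motions add by strong Markov, yielding
\[
\int_0^t p_{\tau_k}(s)\, \frac{\ell+1}{(t-s)^{3/2}} e^{-c\ell^2/(t-s)}\,\mathrm{d}s \leq C\,\frac{k+\ell+1}{t^{3/2}} e^{-c(k^2+\ell^2)/t},
\]
after using $(k+\ell)^2 \geq k^2+\ell^2$ to split the exponentials. The degenerate cases $k=0$ (where $\tau_0=0$ reduces everything to the direct estimate on $F(t)$) and $\ell=0$ (handled via the trivial bound $F(r)\leq C$ for $r\leq 1$ combined with explicit control of $p_{\tau_k}$ near $s=t$) will be treated separately, the extra $+1$ in $k+\ell+1$ absorbing them.

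The hard part will be the bound on $F(r)$: the exponential weight $e^{\gamma M_{\sigma_n}}$ must be absorbed without destroying the Brownian ballot-type decay in $\ell$, and this works because the assumption $\gamma<2$ exactly matches the integrability of $e^{\gamma^2 u/2}$ against $e^{-2u}$ in the Gamma density. A subtlety will arise when $B_u$ is very negative, since both the conditional ballot probability and $M_u$ are then sensitive to $B_u$; the cleanest way to deal with this is to fold the dependence on $B_u$ into the exponential weight from the outset using $(1-B_u)_+ \leq 1+M_u$, so that the spatial integration in $B_u$ decouples cleanly from the Gamma integration in $\sigma_n$.
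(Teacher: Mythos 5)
Your overall architecture matches the paper's: strong Markov at $\tau_k$, an inner estimate for the post-$\tau_k$ quantity $F(r)$ (which is exactly the paper's Lemma~\ref{lem:control_on_E_kl_before_tau_k}), and then the convolution with the hitting-time density of $\tau_k$. However, there is a genuine gap in the key step: the bound on $F(r)$.

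The gap is in the claimed conditional ballot estimate. After conditioning at time $u=\sigma_n$, Lemma~\ref{lem:brownian_estimate} gives a Gaussian factor of the form $\e^{-((1-B_u)-y)^2/2(r-u)}$ with $y\in[\ell,\ell+2]$, i.e.\ a factor that decays in the \emph{shifted} quantity $(B_u+\ell)$, not in $\ell$ alone. Your bound $C(1-B_u)_+(\ell+1)(r-u)^{-3/2}\e^{-c\ell^2/(r-u)}$ silently replaces this by a $B_u$-independent factor $\e^{-c\ell^2/(r-u)}$. These agree when $B_u$ is bounded, but they are very different when $B_u\approx-\ell$: there the true Gaussian factor is of order one while your replacement is exponentially small. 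This regime is precisely where the weight $\e^{\gamma M_u}$ is large (of order $\e^{\gamma\ell}$), so it cannot be dismissed; the elementary substitution $(1-B_u)_+\le 1+M_u$ only controls the polynomial prefactor and does nothing for the exponential. The paper handles this by keeping the $B_u$-dependent Gaussian, writing out the joint density of $(B_u,\sup_{[0,u]}(-B))$, and completing the square; this produces a cross term of the form $\gamma\ell' u/t$ which must then be controlled by a case split on whether $t\gtrsim\ell$ (where the Gamma decay $\e^{-2u}$ wins) or $t\lesssim\ell$ (where a direct H\"older bound is used). None of this appears in your outline, and the reflection estimate $\E[\e^{\gamma M_u}(1+M_u)]\le C(1+u)\e^{\gamma^2u/2}$ alone is not enough to recover the decay $\e^{-c\ell^2/r}$ in the regime where $r$ is comparable to $\ell$.

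Two more minor points. First, the convolution identity $p_{\tau_k}\ast p_{\tau_\ell}=p_{\tau_{k+\ell}}$ is a nice shortcut for the outer step (the paper does a direct split of the convolution integral at $t/2$ instead), but to apply it you must compare $(\ell+1)r^{-3/2}\e^{-c\ell^2/r}$ to $p_{\tau_{\ell'}}(r)$ with $\ell'=\ell\sqrt{2c}$; this works for $\ell\ge1$ but fails for $\ell=0$ where the function is not a hitting-time density, so the $\ell=0$ case really does need the separate convolution estimate, not merely ``control near $s=t$''. Second, the decay in $k$ in the final bound comes from splitting the convolution integral around $t/2$ (using the hitting-time tail for $s$ small and the Gaussian core of $p_{\tau_k}$ for $s$ near $t$), not just from a boundary effect, and this should be made explicit if you want the uniform constant $c>0$.
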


\begin{remark}
    The exponential factor in the bound~\eqref{eq:control_on_E_kl} has not been used to prove~\eqref{eq:formulation1}, because we bounded the factor $\e^{- c \ell^2/t}$ in~\eqref{eq:lower_temperature_upper_bound_last_but_one_step} by 1.
    However, it will play a key role in the proof of the upper bound for Theorem~\ref{thm:asymptotics_in_mean}.\ref{it:asymptotics_in_mean_3}, which is also based on~\eqref{eq:lower_temperature_upper_bound_last_but_one_step}.
\end{remark}

To prove Lemma~\ref{lem:control_on_E_kl}, we will need the following intermediate lemma.

\begin{lemma}\label{lem:control_on_E_kl_before_tau_k}
    Let $\sigma_n$ be a random variable with distribution $\Gamma(n, 2)$ and $(B_t)_{t \geq 0}$ be a standard Brownian motion independent of $\sigma_n$.
    For any $\gamma < 2$, there exist $C, c > 0$ such that, for any $t > 0$,
    \begin{equation}\label{eq:control_on_E_kl_before_tau_k}
        \E \left[ \e^{\gamma \sup_{[0, \sigma_n]} B} \mathds{1}_{\{\inf_{[0, t]} B \geq -1, B_t \in [\ell-1, \ell+1], \sigma_n \leq t\}} \right] \leq \begin{cases}
            C (t + 1)^{-3/2} & \text{if } \ell = 0 \text{ or } 1, \\
            C \ell t^{-3/2} \e^{-c \ell^2/t} & \text{if } \ell \geq 2.
        \end{cases}
    \end{equation}
\end{lemma}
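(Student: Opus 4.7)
}

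By independence of $\sigma_n$ and $B$, I would first write
\begin{equation*}
    \E \left[ \e^{\gamma \sup_{[0, \sigma_n]} B} \mathds{1}_{\{\inf_{[0, t]} B \geq -1,\, B_t \in [\ell-1, \ell+1],\, \sigma_n \leq t\}} \right]
    = \int_0^t \phi(s) \, J(s) \, \d{s},
\end{equation*}
where $\phi(s) = \frac{2^n s^{n-1}}{(n-1)!} e^{-2s}$ is the $\Gamma(n,2)$ density and $J(s) = \E[\e^{\gamma M_s} \mathds{1}_{\{\inf_{[0,t]} B \geq -1,\, B_t \in [\ell-1, \ell+1]\}}]$, with $M_s := \sup_{[0,s]} B$. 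The Markov property at time $s$ then factorises $J(s) = \E[\e^{\gamma M_s} \mathds{1}_{\{\inf_{[0,s]} B \geq -1\}} \, h(B_s, t-s)]$, where $h(y, T) = \P(\inf_{[0,T]} B' \geq -1-y,\, B'_T \in [\ell-1-y, \ell+1-y])$ for an independent standard Brownian motion $B'$.

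For the tail piece $h(y, T)$, I would apply Lemma \ref{lem:brownian_estimate} after translating $\tilde{B} = -B' + (y+1)$ to a standard Brownian motion starting from $0$ constrained to stay below $y+1$, yielding
\begin{equation*}
    h(y, T) \leq \frac{C (y+1)(y+\ell+2)}{T^{3/2}} \bigl( \e^{-(\ell-y-1)^2/2T} + \e^{-(\ell-y+1)^2/2T} \bigr)
\end{equation*}
for $y \geq -1$, $\ell \geq 0$, $T \geq 1$. Plugging this back and absorbing the polynomial $(1+B_s)(1+B_s+\ell) \leq C(1+\ell)(1+M_s)^2$ into the exponential factor, I would reduce to estimating
\begin{equation*}
    J(s) \leq \frac{C(1+\ell)}{(t-s)^{3/2}} \, \E\bigl[ \e^{\gamma M_s} (1+M_s)^2 \bigl( \e^{-c(\ell-B_s)_+^2/(t-s)} + \e^{-c(B_s - \ell)_+^2/(t-s)} \bigr) \mathds{1}_{\{B_s \geq -1\}} \bigr],
\end{equation*}
and then bound this by $\frac{C(1+\ell)(1+s)^k e^{\gamma^2 s/2}}{(t-s)^{3/2}}\, G(\ell, s, t-s)$ where $G$ carries the decay in $\ell$; the classical bound $\E[e^{\gamma M_s}] \leq 2 e^{\gamma^2 s/2}$ (and its polynomial refinement) supplies the $e^{\gamma^2 s/2}$ factor.

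It remains to integrate against $\phi$. Since $\phi(s) e^{\gamma^2 s/2} = C_n s^{n-1} e^{-\mu s}$ with $\mu := 2 - \gamma^2/2 > 0$ by the assumption $\gamma < 2$, splitting the $s$-integral at $t/2$ gives: on $[0, t/2]$ the factor $(t-s)^{-3/2} \leq C t^{-3/2}$ and one can use $e^{-c(\ell-y)^2/(t-s)} \leq e^{-c\ell^2/(2t)}$ (after a further discretisation in $B_s$ or by absorbing into the polynomial) to produce the target $C \ell \, t^{-3/2} e^{-c\ell^2/t}$ for $\ell \geq 2$ and $C t^{-3/2}$ for $\ell \in \{0,1\}$; on $[t/2, t]$, after the change of variable $v = t-s$, the remaining integral $\int_0^{t/2} v^{-3/2} e^{\mu v - c\ell^2/v} \d{v}$ is controlled by $e^{\mu t/2}/\ell$ for $\ell \geq 1$, which combined with the prefactor $e^{-\mu t/2}$ is negligible with respect to the first contribution. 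The main obstacle is the non-integrable singularity of $(t-s)^{-3/2}$ at $s=t$: it must be tamed using simultaneously the exponential weight $e^{-\mu s}$ coming from the interplay of $\phi$ and $\E[e^{\gamma M_s}]$ (this is precisely where $\gamma < 2$ enters) and the Gaussian factor $e^{-c(\ell-B_s)^2/(t-s)}$ provided by the ballot estimate.
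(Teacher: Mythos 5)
The skeleton of your proposal matches the paper's: integrate against the $\Gamma(n,2)$ density, apply the Markov property at time $s$ together with the ballot estimate (Lemma~\ref{lem:brownian_estimate}), then split the $s$-integral at $t/2$ and use $\E[\e^{\gamma M_s}]\leq 2\e^{\gamma^2 s/2}$ to exploit $\gamma<2$. However, the two places where you gloss over are precisely where the real work lies for $\ell\geq 2$, and your claimed intermediate bound is false.

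First, the inequality $\e^{-c(\ell-y)^2/(t-s)}\leq \e^{-c\ell^2/(2t)}$ is simply wrong: when $B_s=y\approx \ell$ the left side is $\approx 1$ while the right side can be arbitrarily small. The Gaussian factor coming from the ballot estimate must be combined with the exponential moment of $M_s$ \emph{and} the constraint $M_s\geq B_s$; the point is that if $B_s$ is near $\ell$ then $M_s\geq\ell$, and the joint density of $(B_s,M_s)$ supplies the decay $\e^{-\ell^2/2s}$ which, together with $\e^{\gamma M_s}$, has to be balanced against $\e^{\gamma^2 s/2}$. The paper does this by writing out the explicit joint density \eqref{eq:density_joint_law_brownian_motion_and_supremum}, observing that the $x$-integrand is a Brownian-bridge density maximized outside $[-1,y]$, reducing the inner integral to its value at $x=y$, and then completing the square. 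This yields a factor $\e^{\gamma^2 s/2 + \gamma\ell' s/t - c\ell'^2/t}$, and the cross-term $\gamma\ell' s/t$ is the source of a second difficulty you did not identify: when one integrates $s^{n-1}\e^{(-2+\gamma^2/2+\gamma\ell'/t)s}\,\d s$ over $[0,t/2]$, the exponent can be \emph{positive} if $\ell'/t$ is large, so the paper must split into the cases $t\gtrless (1+\varepsilon)\frac{2\gamma}{4-\gamma^2}\ell'$ and use a crude H\"older bound in the second case. Your plan contains no analogue of this case split, and without it the integral over $[0,t/2]$ is not controlled.

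Second, your treatment of $s\in[t/2,t]$ is different from the paper's and likely does not close. You propose to keep the ballot factor $(t-s)^{-3/2}\e^{-c\ell^2/(t-s)}$ on this range, change variables $v=t-s$, and control $\int_0^{t/2}v^{-3/2}\e^{\mu v - c\ell^2/v}\,\d v$. But when $B_s\approx\ell$ the Gaussian factor is absent, so you cannot uniformly write $\e^{-c\ell^2/v}$ in the integrand, and the $v^{-3/2}$ singularity is not tamed. The paper avoids this entirely on $[t/2,t]$: for $\ell\geq 2$ it drops the ballot estimate, bounds the indicator by $\mathds{1}_{\{B_t\geq\ell-1\}}$, applies H\"older's inequality with $p$ chosen so that $p\gamma^2<4$, and lets the resulting $\e^{-(2-p\gamma^2/2)s}$ factor from the gamma density deliver exponential decay in $t$ together with a Gaussian tail $\e^{-c\ell^2/t}$. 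In short: your outline captures the architecture of the argument, but both the key algebraic identity (Brownian-bridge completion of squares) and the necessary dichotomy $t\gtrless c\ell$ are missing, and as written the crucial step for $\ell\geq 2$ rests on a false inequality.
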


\begin{proof}
    First consider $\ell = 0$ or $1$.
    The proof of the case $\ell \geq 2$ will be similar although more technical.
    We have
    \begin{align}
        &\E \left[ \e^{\gamma \sup_{[0, \sigma_n]} B} \mathds{1}_{\{\inf_{[0, t]} B \geq -1, B_t \in [\ell-1, \ell+1], \sigma_n \leq t\}} \right] \nonumber \\
        &\quad = C \int_0^t \E \left[ \e^{\gamma \sup_{[0, s]} B} \mathds{1}_{\{\inf_{[0, t]} B \geq -1, B_t \in [\ell-1, \ell+1]\}} \right] s^{n-1} \e^{-2s} \d{s}. \label{eq:control_on_E_kl_before_tau_k_integrated_formulation}
    \end{align}
    Bounding the indicator function by $1$ and using $\E[e^{\gamma \sup_{[0,s]} B}] \leq 2 \e^{\gamma^2s/2}$, we obtain
    \begin{align}
        \int_{t/2}^t \E \left[ \e^{\gamma \sup_{[0, s]} B} \mathds{1}_{\{\inf_{[0, t]} B \geq -1, B_t \in [\ell-1, \ell+1]\}} \right] s^{n-1} \e^{-2s} \d{s} & \leq C \int_{t/2}^t s^{n - 1} \e^{-(2 - \gamma^2/2)s} \d{s} \nonumber \\ 
        & \leq C t^n \e^{-(2 - \gamma^2/2)t/2}. \label{eq:control_on_E_kl_before_tau_k_large_s}
    \end{align}
    Concerning the integration over $s \in [0, t/2]$, we use Markov property at time $s$ and Lemma~\ref{lem:brownian_estimate} (recall $\ell = 0$ or $1$) to get 
    \begin{align}
        \E \left[ \e^{\gamma \sup_{[0, s]} B} \mathds{1}_{\{\inf_{[0, t]} B \geq -1, B_t \in [\ell-1, \ell+1]\}} \right] & \leq \E \left[ \e^{\gamma \sup_{[0, s]} B} \mathds{1}_{\{B_s \geq -1\}} \left( C (B_s + 2) (t - s)^{-3/2} \wedge 1 \right) \right] \nonumber \\ 
        & \leq C \E \left[ \e^{\gamma \sup_{[0, s]} B} \mathds{1}_{\{B_s \geq -1\}} (B_s + 2) (t - s + 1)^{-3/2} \right].
        \label{eq:control_on_E_kl_before_tau_k_ballot}
    \end{align}
    Note that $(t - s + 1)^{-3/2} \leq C(t + 1)^{-3/2}$ since $s \leq t/2$.
    Moreover, $(B_s, \sup_{[0, s]} B)$ has the following density (see \eg \cite[Equation~2]{Yor1997} or \cite[Formula~1.1.1.8]{BorodinSalminen2002}),
    \begin{equation}\label{eq:density_joint_law_brownian_motion_and_supremum}
        (x, y) \mapsto \sqrt{2/\pi} s^{-3/2} (2y - x) \e^{-(2y - x)^2/2s} \mathds{1}_{x \leq y, 0  \leq y} \d{x} \d{y}.
    \end{equation}
    We deduce that, up to a multiplicative constant, \eqref{eq:control_on_E_kl_before_tau_k_ballot} is bounded by
    \begin{align}
        &\frac{1}{(t + 1)^{3/2} s^{3/2}} \int_0^\infty (y + 1)^2 \e^{\gamma y} \int_{-1}^y \e^{-(2y - x)^2/2s} \d{x} \d{y} \nonumber \\
        &\quad \leq \frac{C}{(t + 1)^{3/2} s} \int_{-\infty}^\infty (y + 1)^2 \e^{\gamma y - y^2/2s} \d{y} = \frac{P(s) \e^{\gamma^2 s/2}}{(t + 1)^{3/2} \sqrt{s}}, \nonumber 
    \end{align}
    for some polynomial $P$.
    By integrating this against $s^{n-1} \e^{-2s} \mathds{1}_{s \in [0, t/2]} \d{s}$, we obtain
    \begin{equation}\label{eq:control_on_E_kl_before_tau_k_small_s}
        \int_0^{t/2} \E \left[ \e^{\gamma \sup_{[0, s]} B} \mathds{1}_{\{\inf_{[0, t]} B \geq -1, B_t \in [\ell-1, \ell+1]\}} \right] s^{n-1} \e^{-2s} \d{s} \leq C (t + 1)^{-3/2}.
    \end{equation}
    By~\eqref{eq:control_on_E_kl_before_tau_k_large_s} and~\eqref{eq:control_on_E_kl_before_tau_k_small_s}, the bound~\eqref{eq:control_on_E_kl_before_tau_k} holds for $\ell = 0$ or $1$.

    Now consider $\ell \geq 2$.
    By H\"{o}lder's inequality, for all $p, q > 1$ such that $1/p + 1/q = 1$,
    \begin{align}
        \E \left[ \e^{\gamma \sup_{[0, s]} B} \mathds{1}_{\{\inf_{[0, t]} B \geq -1, B_t \in [\ell-1, \ell+1]\}} \right] 
        &\leq \E \left[ \e^{p \gamma \sup_{[0, s]} B} \right]^{1/p} \P(B_t \geq \ell - 1)^{1/q} \nonumber \\
        & \leq C \e^{p \gamma^2 s/2} \e^{-(\ell - 1)^2/2qt}. \label{eq:control_on_E_kl_before_tau_k_holder}
    \end{align}
    Taking $p$ close enough to $1$ so that $p\gamma^2 < 4$ and integrating~\eqref{eq:control_on_E_kl_before_tau_k_holder} against $s^{n-1} \e^{-2s} \mathds{1}_{s \in [t/2, t]} \d{s}$, we obtain
    \begin{equation}\label{eq:control_on_E_kl_before_tau_k_large_s_2}
        \int_{t/2}^t \E \left[ \e^{\gamma \sup_{[0, s]} B} \mathds{1}_{\{\inf_{[0, t]} B \geq -1, B_t \in [\ell-1, \ell+1]\}} \right] s^{n-1} \e^{-2s} \d{s} \leq C \e^{-c t - c \ell^2/t}.
    \end{equation}
    The integration over $s \in [0, t/2]$ is more intricate.
    If $t \leq 2$, then~\eqref{eq:control_on_E_kl_before_tau_k_holder} implies that
    \begin{equation} \label{eq:control_on_E_kl_before_tau_k_small_s_small_t}
        \int_0^{t/2} \E \left[ \e^{\gamma \sup_{[0, s]} B} \mathds{1}_{\{\inf_{[0, t]} B \geq -1, B_t \in [\ell-1, \ell+1]\}} \right] s^{n-1} \e^{-2s} \d{s} 
        \leq C \e^{-c\ell^2/t} 
        \leq C \ell (t/2)^{-3/2} \e^{-c\ell^2/t}.
    \end{equation}
    Now assume that $t > 2$ so that $t - s > 1$.
    By using Markov property and Lemma~\ref{lem:brownian_estimate},
    \begin{align}
        &\E \left[ \e^{\gamma \sup_{[0, s]} B} \mathds{1}_{\{\inf_{[0, t]} B \geq -1, B_t \in [\ell-1, \ell+1]\}} \right] \nonumber \\
        &\quad \leq C \E \left[ \e^{\gamma \sup_{[0, s]} B} \mathds{1}_{\{B_s \geq -1\}} \frac{(B_s + 1) \ell}{(t - s)^{3/2}} \left( \e^{-(B_s - \ell - 1)^2/2(t-s)} + \e^{-(B_s - \ell + 1)^2/2(t-s)}\right) \right]. \nonumber 
    \end{align}
    Let $\ell' = \ell \pm 1$.
    Since $(B_s, \sup_{[0, s]} B)$ has density~\eqref{eq:density_joint_law_brownian_motion_and_supremum},
    \begin{align}
        &\E \left[ \e^{\gamma \sup_{[0, s]} B} \mathds{1}_{\{B_s \geq -1\}} (B_s + 1) \e^{-(B_s - \ell')^2/2(t-s)} \right] \nonumber \\
        &\quad \leq \frac{C}{s^{3/2}} \int_0^\infty (y + 1)^2 \e^{\gamma y} \int_{-1}^y \e^{-(x - \ell')^2/2(t-s) - (2y - x)^2/2s} \d{x} \d{y}. \label{eq:control_on_E_kl_before_tau_k_at_l'}
    \end{align}
    Up to a constant in $x$, the function $x \mapsto \e^{-(x - \ell')^2/2(t-s) - (2y - x)^2/2s}$ is the density at time $s$ of a Brownian bridge from $(0, 2y)$ to $(t, \ell')$.
    It achieves its maximum at $x_{\max} = 2y (t-s)/t + \ell' s/t$, which is larger than $y$ since $\ell' > 0$ and $s \leq t/2$.
    We deduce that~\eqref{eq:control_on_E_kl_before_tau_k_at_l'} is bounded by
    \begin{align}
        &\frac{C}{s} \int_0^\infty (y + 1)^2 \e^{\gamma y - (y - \ell')^2/2(t-s) - y^2/2s} \d{y} \nonumber \\
        &\quad = \frac{C}{s} \e^{\gamma^2 s (t-s)/2t + \gamma \ell' s/t - {\ell'}^2/2t} \int_0^\infty (y + 1)^2 \e^{-(y - y_{\max})^2 t/2s(t-s)} \d{y}, \nonumber 
    \end{align}
    where $y_{\max} = \ell' s/t + \gamma s(t-s)/t$.
    Subsequently,
    \begin{align}
        \E \left[ \e^{\gamma \sup_{[0, s]} B} \mathds{1}_{\{B_s \geq -1\}} (B_s + 1) \e^{-(B_s - \ell')^2/2(t-s)} \right] &\leq C \frac{(y_{\max} + 1)^2 + s(t-s)/t}{\sqrt{s}} \e^{\gamma^2 s (t-s)/2t + \gamma \ell' s/t - {\ell'}^2/2t} \nonumber \\
        &\leq \frac{P(s)}{\sqrt{s}} \e^{\gamma^2 s/2 + \gamma \ell' s/t - c{\ell'}^2/t}, \nonumber 
    \end{align}
    for some polynomial $P$, where we have used that $(\ell' s/t)^2 \e^{-{\ell'}^2/2t} \leq s ({\ell'}^2/t) \e^{-{\ell'}^2/2t} \leq C s \e^{-c {\ell'}^2/t}$.
    Hence,
    \begin{align}
        &\int_0^{t/2} \E \left[ \e^{\gamma \sup_{[0, s]} B} \mathds{1}_{\{B_s \geq -1\}} \frac{(B_s + 1) \ell}{(t - s)^{3/2}} \e^{-(B_s - \ell')^2/2(t-s)} \right] s^{n - 1} \e^{-2s} \d{s} \nonumber \\
        &\quad \leq \frac{C \ell}{t^{3/2}} \e^{-c {\ell'}^2/t} \int_0^{t/2} \frac{P(s) s^{n - 1}}{\sqrt{s}} \e^{(-2 + \gamma^2/2 + \gamma \ell'/t)s} \d{s}, \label{eq:bound_for_integral_of_I}
    \end{align}
    Let $\varepsilon \in (0, 1)$.
    \begin{enumerate}
        \item If $t \geq (1 + \varepsilon) \frac{2 \gamma}{4 - \gamma^2} \ell'$, then
        \begin{equation*}
            -2 + \frac{\gamma^2}{2} + \frac{\gamma \ell'}{t} \leq\frac{-4 + \gamma^2}{2} + \frac{4 - \gamma^2}{2 (1 + \varepsilon)} < 0.
        \end{equation*}
        In this case, \eqref{eq:bound_for_integral_of_I} is bounded by $C \ell t^{-3/2} \e^{-c {\ell'}^2/t}$.
        \item If $t < (1 + \varepsilon) \frac{2 \gamma}{4 - \gamma^2} \ell'$, then we can directly use H\"{o}lder's inequality~\eqref{eq:control_on_E_kl_before_tau_k_holder} to bound
        \begin{equation*}
            \int_0^{t/2} \E \left[ \e^{\gamma \sup_{[0, s]} B} \mathds{1}_{\{\inf_{[0, t]} B \geq -1, B_t \in [\ell-1, \ell+1]\}} \right] s^{n-1} \e^{-2s} \d{s} 
            \leq C \e^{-c\ell^2/t} \leq C \e^{-ct} \e^{-c\ell^2/t},
        \end{equation*}
        up to a change of the constant $c$.
    \end{enumerate}
    In both cases, there exist $C, c > 0$ such that, for all $t > 2$,
    \begin{equation} \label{eq:control_on_E_kl_before_tau_k_small_s_large_t}
        \int_0^{t/2} \E \left[ \e^{\gamma \sup_{[0, s]} B} \mathds{1}_{\{\inf_{[0, t]} B \geq -1, B_t \in [\ell-1, \ell+1]\}} \right] s^{n-1} \e^{-2s} \d{s} \leq C \ell t^{-3/2} \e^{-c\ell^2/t}.
    \end{equation}
    By~\eqref{eq:control_on_E_kl_before_tau_k_large_s_2}, \eqref{eq:control_on_E_kl_before_tau_k_small_s_small_t} and~\eqref{eq:control_on_E_kl_before_tau_k_small_s_large_t}, the bound~\eqref{eq:control_on_E_kl_before_tau_k} holds for $\ell \geq 2$.
\end{proof}

\begin{proof}[Proof of Lemma~\ref{lem:control_on_E_kl}]
    If $k = 0$, then $\tau_k = 0$ and~\eqref{eq:control_on_E_kl} is a direct consequence of Lemma~\ref{lem:control_on_E_kl_before_tau_k}, because $X_\xi$ under $\Q_0$ has the same distribution as $-B$.
    Assume $k \geq 1$.
    In order to use Markov property, define $\sigma_n = s_{k, n} - \tau_k$ and
    \begin{equation*}
        F_{\ell}(s) = \e^{\gamma \sup_{[s, s + \sigma_n]} (X_\xi(s) - X_\xi)} \mathds{1}_{\{\inf_{[s, t]} (X_\xi(s) - X_\xi) \geq -1, X_\xi(s) - X_\xi(t) \in [\ell - 1, \ell + 1], \sigma_n \leq t - s\}}.
    \end{equation*}
    The left-hand side of~\eqref{eq:control_on_E_kl} is bounded by
    \begin{equation}\label{eq:control_on_E_kl_markov_property}
        \E_{\Q_0} \left[ F_{\ell}(\tau_k) \mathds{1}_{\{\sup_{[0, \tau_k]} X_\xi \leq k + 1, \tau_k \leq t\}} \right] = \int_0^t \E_{\Q_0} \left[ F_{\ell}(s) \right] \Q_0 \left( \sup_{[0, \tau_k]} X_\xi \leq k + 1, \tau_k \in \d{s} \right).
    \end{equation}
    If $\ell = 0$ or $1$, then by Lemma~\ref{lem:control_on_E_kl_before_tau_k} and \cite[Formula~1.2.1.4.(1)]{BorodinSalminen2002}, up to a multiplicative constant, \eqref{eq:control_on_E_kl_markov_property} is bounded by
    \begin{equation*}
        \int_0^t \frac{k \e^{-k^2/2s}}{(t - s + 1)^{3/2} s^{3/2}} \d{s} \leq \frac{2^{3/2} k}{t^{3/2}} \left( \int_0^{t/2} \frac{\e^{-k^2/2s}}{s^{3/2}} \d{s} + \int_{t/2}^t \frac{\e^{-k^2/2s}}{(t - s + 1)^{3/2}} \d{s} \right),
    \end{equation*}
    which is bounded by $C k t^{-3/2} \e^{-k^2/2t}$, using for the first term that
    \begin{equation}\label{eq:control_on_E_kl_bounds_for_integrals}
    \int_0^{t} \frac{\e^{-k^2/2s}}{s^{3/2}} \d{s} 
    \leq \e^{-k^2/4t} \int_0^\infty \frac{\e^{-k^2/4s}}{s^{3/2}} \d{s} 
    = \frac{C \e^{-k^2/4t}}{k}.
    \end{equation}
    If $ \ell \geq 2$, then by Lemma~\ref{lem:control_on_E_kl_before_tau_k} and \cite[Formula~1.2.1.4.(1)]{BorodinSalminen2002}, up to a multiplicative constant, \eqref{eq:control_on_E_kl_markov_property} is bounded by
    \begin{equation*}
        \int_0^t \frac{\ell \e^{-c \ell^2/(t-s)} k \e^{-k^2/2s}}{(t-s)^{3/2} s^{3/2}} \d{s} \leq \frac{2^{3/2} k \ell}{t^{3/2}} \left( \e^{-c \ell^2/t} \int_0^{t/2} \frac{\e^{-k^2/2s}}{s^{3/2}} \d{s} + \e^{-k^2/2t} \int_{t/2}^t \frac{\e^{-c \ell^2/(t-s)}}{(t - s)^{3/2}} \d{s} \right).
    \end{equation*}
    We can use~\eqref{eq:control_on_E_kl_bounds_for_integrals} to bound this by $C (k + \ell + 1) t^{-3/2} \e^{-c k^2/t - c \ell^2/t}$, which concludes.
\end{proof}

\subsection{Intermediate temperature}

\begin{proof}[Proof of Theorem~\ref{thm:asymptotics_in_mean}.\ref{it:asymptotics_in_mean_3}]
    Let $\beta = \sqrt{2\smash{/}3}$. The ideas are closed to the one used in the case $\beta \in (\sqrt{2\smash{/}3},\sqrt{2})$ in the previous subsection, only the end of the argument has to be adapted.
    
    \textbf{Lower bound:}
    Let $\alpha$ be an arbitrary number in $(0, 1/2)$.
    The arguments used to prove~\eqref{eq:lower_temperature_lower_bound_formulation1} also hold for $\beta = \sqrt{2\smash{/}3}$, but the fact of restricting ourselves to the event $\{ B_{at} \in [0,1] \}$ in the last step~\eqref{eq:lower_temperature_lower_bound_ballot_theorem} of the proof is now too crude: there the factor $\e^{(2\beta - v(\beta))B_{at}}$ prevents $B_{at}$ from being large in the negatives, whereas here this factor does not play a role because $v(\beta)=2\beta$.
    Instead, we stop the reasoning at~\eqref{eq:lower_temperature_lower_bound_before_ballot_theorem} to get 
    \begin{equation*}
        \Expec{\nu_{\beta, t}([a, 1])} \geq c \e^{-at/3} \P \left( \sup_{s \in [0, at]} (B_s + s^\alpha \wedge (at - s)^\alpha) \leq 1 \right).
    \end{equation*}
    By~\eqref{eq:ballot_theorem}, this quantity is larger than $c \e^{-at/3} t^{-1/2}$, which concludes the proof of the lower bound.
    Note that, here, we could have replaced $s^\alpha \wedge (at - s)^\alpha$ with $s^\alpha$ since the resulting probabilities are of the same order.
    
    \textbf{Upper bound:}
    Our goal is to show that $\Expec{\nu_{\beta, t}([a, 1])} = O(t^{-1/2} \e^{-at/3})$.
    We follow the proof of~\eqref{eq:formulation1} up to~\eqref{eq:lower_temperature_upper_bound_last_but_one_step} (recall it has been done in the case $\beta \in [\sqrt{2\smash{/}3},\sqrt{2})$) and choosing $\alpha = 2$ which satisfies the condition $\alpha \in [0, 2] \cap [1, 2/\beta^2)$.
    Noting that $2\beta = v(\beta)$ when $\beta = \sqrt{2\smash{/}3}$, \eqref{eq:lower_temperature_upper_bound_last_but_one_step} becomes
    \begin{equation*}
        \E_{\Q_0} \left[ (U_t')^{-2} \mathds{1}_A \right] \leq C t^{-3/2} \sum_{k = 0}^\infty \sum_{\ell = 0}^\infty \e^{-2 \beta k} (k + \ell + 1) \e^{-c k^2/t - c \ell^2/t} 
        = O(t^{-1/2}).
    \end{equation*}
    This implies $\Expec{\nu_{\beta, t}([a, 1])} = O(t^{-1/2} \e^{-at/3})$ in the same way as~\eqref{eq:formulation4} implies~\eqref{eq:formulation1}.
    This concludes the proof.
\end{proof}

\subsection{Infinite temperature}

\begin{proof}[Proof of Theorem~\ref{thm:asymptotics_in_mean}.\ref{it:asymptotics_in_mean_1}]
    We consider here the case $\beta = 0$. 
    Using the first expression in~\eqref{eq:overlap_rewriting} with $\beta=0$, and then that $\#\cN(t)$ is geometrically distributed with parameter $\e^{-t}$ (see the first example in Section III.5 of \cite{AthreyaNey1972}), we can write
    \begin{equation*}
    \nu_{0, t}([a, 1]) 
    = \frac{1}{(\#\cN(t))^2} \sum_{u\in\cN(at)} (\# \{ v \in \cN(t) : v \geq u\})^2
    = \frac{\sum_{i=1}^M N_i^2}{\left( \sum_{i=1}^M N_i \right)^2},
    \end{equation*}
    where $M$ is geometrically distributed with parameter $q = \e^{-at}$ and $(N_i)_{i\geq 1}$ is a sequence of independent geometric r.v.\@ with parameter $p = \e^{-(1-a)t}$, independent of $M$.
    Using the representation $x^{-2} = \int_0^\infty u \e^{-ux} \d{u}$ for $x > 0$ together with Fubini's theorem, we get
    \begin{equation*}
    \E \left[ \nu_{0, t}([a, 1]) \right]
    = \int_0^\infty \E \left[ \sum_{i=1}^M N_i^2 \exp \left( - u \sum_{i=1}^M N_i\right) \right] u \d{u}.
    \end{equation*}
    Noting that 
    \begin{equation*}
    \E \left[ \sum_{i=1}^M N_i^2 \exp \left( - u \sum_{i=1}^M N_i\right) \middle| M \right]
    = M \cdot \E \left[ N_1^2 \e^{-uN_1} \right] \cdot \E \left[ \e^{-uN_1} \right]^{M-1}.
    \end{equation*}
    and $\E[M s^{M-1}] = q(1-(1-q)s)^{-2}$ for any $s \in [0,1/(1-q))$, we get
    \begin{equation} \label{eq:beta0_exact_expression}
    \E \left[ \nu_{0, t}([a, 1]) \right]
    = \int_0^\infty
    \frac{q u \E[ N_1^2 \e^{-uN_1}]}{(1-(1-q)\E[\e^{-uN_1}])^2} \d{u}
    = pq \int_0^\infty 
    \frac{u \e^{-u} (1+(1-p)\e^{-u})}{(1-(1-p)\e^{-u})(1-(1-pq)\e^{-u})^2} \d{u},
    \end{equation}
    using $\E[\e^{-uN_1}] = p\e^{-u}/(1-(1-p)\e^{-u})$ and $\E[N^2 \e^{-uN_1}] = p\e^{-u}(1+(1-p)\e^{-u})/(1-(1-p)\e^{-u})^3$.
    We now split the integral on the right-hand side of~\eqref{eq:beta0_exact_expression} in three pieces, the main part being the one from $pq$ to $p$.
    Recall $q = \e^{-at}$ and $p = \e^{-(1-a)t}$.
    
    For the part $u \in [0,pq]$, we use $1-(1-r)\e^{-u} \geq r \e^{-1}$ with $r =p$ and $r=pq$ to get
    \begin{equation} \label{eq:beta0_part1}
    \int_0^{pq}
    \frac{u \e^{-u} (1+(1-p)\e^{-u})}{(1-(1-p)\e^{-u})(1-(1-pq)\e^{-u})^2} \d{u}
    \leq \frac{C}{p (pq)^2} \int_0^{pq} u \d{u}
    \leq \frac{C}{p}.
    \end{equation}
    For the part $u \in [p,\infty)$, we use $1-(1-r)\e^{-u} \geq 1-\e^{-u} \geq (1-\e^{-1})(u\wedge1)$ with $r =p$ and $r=pq$ to get
    \begin{equation} \label{eq:beta0_part2}
    \int_p^\infty
    \frac{u \e^{-u} (1+(1-p)\e^{-u})}{(1-(1-p)\e^{-u})(1-(1-pq)\e^{-u})^2} \d{u}
    \leq \int_p^\infty \frac{Cu \e^{-u}}{(u\wedge1)^3} \d{u}
    \leq \int_p^1 \frac{C}{u^2} \d{u}
    + \int_1^\infty Cu \e^{-u}\d{u}
    \leq \frac{C}{p}.
    \end{equation}
    Finally, we consider the part $u \in [pq,p]$. We have the following asymptotic behavior, as $t\to\infty$, uniformly in $u \in [pq,p]$,
    \begin{align} 
    \frac{u \e^{-u} (1+(1-p)\e^{-u})}{(1-(1-p)\e^{-u})(1-(1-pq)\e^{-u})^2} 
    &= \frac{u (1+O(u)) (2+O(p))}{(p+O(u+p^2))(u+O(pq+u^2))^2} \nonumber \\
    &= \frac{2}{pu} 
    \left( 1 + O \left( p + \frac{u}{p} + \frac{pq}{u} \right) \right), \nonumber
    \end{align}
    which yields 
    \begin{equation} \label{eq:beta0_part3}
    \int_{pq}^p
    \frac{u \e^{-u} (1+(1-p)\e^{-u})}{(1-(1-p)\e^{-u})(1-(1-pq)\e^{-u})^2} \d{u}
    = \frac{2}{p} \left( \log \frac{1}{q} + O \left( p \log \frac{1}{q} + 1 \right) \right).
    \end{equation}
    Combining~\eqref{eq:beta0_part1}, \eqref{eq:beta0_part2}, \eqref{eq:beta0_part3} together with~\eqref{eq:beta0_exact_expression}, we get 
    $\E[\nu_{0, t}([a, 1])] \sim 2q \log \frac{1}{q}$, which concludes the proof, recalling that $q = \e^{-at}$.
\end{proof}

\section*{Acknowledgement}

We warmly thank Pascal Maillard and Bastien Mallein for helpful discussions throughout the project.
We also acknowledge support from the CNRS 80Prime project GEx-MBB, the ANR project MBAP-P (ANR-24-CE40-1833), the MINT fellowship program, and the Réseau Thématique Mathématiques et Physique.
Finally, we are grateful to the anonymous referee for their helpful comments and suggested corrections, which have improved the redaction of this paper.

\bibliographystyle{abbrv}
\bibliography{biblio}

\end{document}